\newcommand{\sslash}{/\mkern-6mu/}
\theoremstyle{definition}
\newtheorem{defn}{Definition}[section]
\theoremstyle{plain}
\newtheorem{lem}[defn]{Lemma}
\theoremstyle{plain}
\newtheorem{cor}[defn]{Corollary}
\theoremstyle{plain}
\newtheorem{prop}[defn]{Proposition}
\theoremstyle{plain}
\newtheorem{thm}[defn]{Theorem}
\theoremstyle{plain}
\newtheorem*{thm*}{Theorem}
\theoremstyle{plain}
\newtheorem{introthm}{Theorem}
\theoremstyle{plain}
\newtheorem{exmpl}[defn]{Example}
\theoremstyle{remark}
\newtheorem{rmk}[defn]{Remark}
\theoremstyle{remark}
\newtheorem{notation}[defn]{Notation}
\crefname{lem}{Lemma}{Lemmas}
\crefname{defn}{Definition}{Definitions}
\crefname{cor}{Corollary}{Corollaries}
\crefname{prop}{Proposition}{Propositions}
\crefname{thm}{Theorem}{Theorems}
\crefname{rmk}{Remark}{Remarks}
\crefname{notation}{Notation}{Notation}
\crefname{section}{Section}{Sections}
\newcommand{\Z}{{\mathbb{Z}}}
\newcommand{\N}{{\mathbb{N}}}
\newcommand{\Q}{{\mathbb{Q}}}
\newcommand{\E}{{\mathbb{E}}}
\renewcommand{\S}{{\mathbb S}}
\newcommand{\PrShv}[1]{{\mathcal{P}(#1)}}
\newcommand{\colim}[1]{{\underset{\substack{#1}}{\operatorname{colim}}\,}}
\newcommand{\colimil}[1]{{{\operatorname{colim}}_{#1}\,}}
\renewcommand{\lim}[1]{{\underset{\substack{#1}}{\operatorname{lim}}\,}}
\newcommand{\limil}[1]{{{\operatorname{lim}}_{#1}\,}}
\newcommand{\Map}[3]{{\operatorname{Map}_{#1}({#2},{#3})}}
\newcommand{\Fun}[1]{{\operatorname{Fun}_{#1}}}
\newcommand{\Sp}{{\operatorname{Sp}}}
\newcommand{\op}[1]{{#1}^{\operatorname{op}}}
\newcommand{\pSus}{{\Sigma^\infty_+}}
\newcommand{\Loop}{{\Omega^\infty}}
\newcommand{\pLoop}{{\Omega_*^\infty}}
\newcommand{\Fib}[1]{{\operatorname{fib}\mathopen{}\left(#1\right)\mathclose{}}}
\newcommand{\Cofib}[1]{{\operatorname{cofib}\mathopen{}\left(#1\right)\mathclose{}}}
\newcommand{\topos}[1]{{\mathcal {#1}}}
\newcommand{\spectra}[1]{{\operatorname{Sp}(\topos {#1})}}
\newcommand{\An}{{\mathcal An}}
\newcommand{\Ab}{{\mathcal{A}b}}
\newcommand{\AbObj}[1]{{\mathcal{A}b(#1)}}
\newcommand{\Grp}[1]{{\mathcal{G}rp(#1)}}
\newcommand{\Disc}[1]{{\operatorname{Disc}(#1)}}
\newcommand{\Cat}[1]{\mathcal {#1}}
\newcommand{\tcon}[2][0]{{#2}_{\ge #1}}
\newcommand{\tcocon}[2][0]{{#2}_{\le #1}}
\newcommand{\tstruct}[1]{(\tcon{#1}, \tcocon{#1})}
\newcommand{\heart}[1]{#1^\heartsuit}
\newcommand{\finfld}[1]{\mathbb{F}_{#1}}
\newcommand{\set}[2]{\left\{\, {#1} \,\middle\vert\, {#2} \,\right\}}
\newcommand{\col}[2]{\left\{\, {#1} \,\right\}_{#2}}
\newcommand{\ShvTop}[2]{\operatorname{Shv}_{#1}({#2})}
\newcommand{\smooth}[1]{\operatorname{Sm}_{#1}} 
\newcommand{\nis}{\operatorname{nis}}
\newcommand{\zar}{\operatorname{zar}}
\newcommand{\id}[1]{\operatorname{id}_{#1}}
\author{Klaus Mattis\footnote{\href{mailto:klaus.mattis@uni-mainz.de}{klaus.mattis@uni-mainz.de}}}
\date{\today}
\title{Unstable arithmetic fracture squares in $\infty$-topoi}
\begin{document}
\maketitle

\begin{abstract}
    We show that for a large class of $\infty$-topoi 
    there exist unstable arithmetic fracture squares,
    i.e.\ squares which recover a nilpotent sheaf $F$
    as the pullback of the rationalization of $F$ with 
    the product of the $p$-completions of $F$ ranging over 
    all primes $p\in\Z$.
\end{abstract}

\tableofcontents
\newpage

\section{Introduction}

Let $R \subseteq \Q$ be a subring of the rational numbers,
and write $P$ for the set of primes $p \in \Z$ that are invertible in $R$.
Note that specifying $R$ is equivalent to specifying $P$ 
since we can recover $R$ as $\Z[p^{-1}\vert p\in P]$.

Bousfield and Kan defined in \cite{bousfield2009homotopy} for all $p \in P$ a $p$-completion functor $L_p \colon \An \to \An$
which are the universal functors which invert $H_*(-, \finfld{p})$-equivalences.
They also defined an $R$-localization functor $L_R \colon \An \to \An$,
which is the universal functor which inverts $H_*(-, R)$-equivalences.

They then proved the existence of arithmetic fracture squares:
\begin{thm*}[Bousfield-Kan]
    Let $X \in \An_*$ be a pointed nilpotent anima.
    Then there is a canonical cartesian square 
    \begin{center}
        \begin{tikzcd}
            X \arrow[r] \arrow[d] \arrow[dr, phantom,"\scalebox{1.5}{\color{black}$\lrcorner$}", near start, color=black] &\prod_{p \in P} L_p X \arrow[d] \\
            L_R X \arrow[r] &L_R \prod_{p \in P} L_p X.
        \end{tikzcd}
    \end{center}
\end{thm*}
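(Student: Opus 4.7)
The plan has three main steps: (i) construct the canonical square, (ii) reduce the cartesian property via the principal Postnikov tower of a nilpotent anima, and (iii) verify the resulting algebraic arithmetic fracture square for abelian groups.

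\textbf{Construction of the square.} The units $\eta_R \colon X \to L_R X$ and $\eta_p \colon X \to L_p X$ of the respective Bousfield localization monads assemble into a map $X \to \prod_{p \in P} L_p X$. Applying $L_R$ to this map and using naturality of $\eta_R$ yields the bottom side $L_R X \to L_R \prod_{p \in P} L_p X$; commutativity of the resulting square is immediate from naturality of the unit.

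\textbf{Reduction via the Postnikov tower.} Since $X$ is pointed nilpotent, it admits a principal refinement of its Postnikov tower: $X \simeq \lim_n X_n$ with $X_0 \simeq \ast$ and each stage $X_n \to X_{n-1}$ a principal fibration with fiber an Eilenberg--MacLane space $K(A_n, n)$. Both $L_p$ and $L_R$ preserve such principal fibrations among nilpotent anima and are compatible with the Postnikov limit in this setting, so the square for $X$ is itself an inverse limit of the squares obtained from each Postnikov stage. Since pullback squares are stable under limits, and since cartesianness of a square of nilpotent anima can be detected on homotopy groups together with their $\pi_1$-actions, it suffices to verify the claim when $X = K(A, n)$ for an abelian group $A$ and $n \geq 1$.

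\textbf{The algebraic fracture square.} For $X = K(A, n)$, the Bousfield--Kan calculation gives $L_R K(A, n) \simeq K(A \otimes R, n)$, while $\pi_\ast L_p K(A, n)$ recovers the derived $p$-completion of $A$. Unwinding the original square on $\pi_n$ then produces a square of abelian groups
\begin{center}
\begin{tikzcd}
A \arrow[r] \arrow[d] & \prod_{p \in P} \complete{A} \arrow[d] \\
A \otimes R \arrow[r] & \left(\prod_{p \in P} \complete{A}\right) \otimes R,
\end{tikzcd}
\end{center}
together with an analogous identity in degree $n-1$ from the Ext-completion summand. Being cartesian reduces to showing that the kernel and cokernel of $A \to \prod_{p \in P} \complete{A}$ are uniquely $P$-divisible, hence $R$-local, which is a standard algebraic computation with $p$-completion of abelian groups.

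\textbf{Main obstacle.} The principal technical difficulty is controlling $L_R$ applied to the infinite product $\prod_{p \in P} L_p X$. As an unstable Bousfield localization on $\An$, $L_R$ does not commute with arbitrary products, so the lower-right corner cannot be computed termwise. The nilpotence hypothesis is precisely what lets one descend to the Postnikov stages, where the relevant homotopy groups are algebraic and $L_R$ acts by tensoring with $R$, thereby bypassing this non-commutativity. A secondary technical point is the correct identification of $\pi_\ast L_p K(A, n)$ for non-finitely-generated $A$: the Bousfield--Kan $p$-completion produces the derived rather than the naive $p$-adic completion, and one must carry both summands of the resulting short exact sequence through the argument.
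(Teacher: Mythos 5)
Your plan follows the same overall architecture as the paper's proof of the general theorem specialized to $\An$ (reduce along a principal refinement of the Postnikov tower to Eilenberg--MacLane spaces, then handle the algebraic case), but there is a genuine gap in the reduction step that your "main obstacle" paragraph identifies and then fails to close.

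The gap: you assert that "the square for $X$ is itself an inverse limit of the squares obtained from each Postnikov stage," but this requires, in particular, that the lower-right corner satisfies
\[
L_R \prod_p L_p X \;\simeq\; \lim_n \; L_R \prod_p L_p \tau_{\le n} X,
\]
which is precisely the commutation of $L_R$ with a sequential limit that you yourself flag as the central difficulty. Saying that nilpotence "lets one descend to the Postnikov stages, thereby bypassing this non-commutativity" is circular: descending to the Postnikov stages \emph{is} the step that needs the commutativity. What is actually needed — and what the paper supplies (\cref{lem:htpydim:prod-p-compl-loc-high-con}, \cref{lem:htpydim:R-loc-of-highly-connected}) — is the observation that the tower $\bigl(\tau_{\ge 1} \prod_p L_p \tau_{\le n} X\bigr)_n$ is \emph{highly connected}: the fibers of its transition maps become arbitrarily connective as $n \to \infty$, because each layer is $\tau_{\ge 1} \prod_p L_p K(A,n+1)$, which (using the description via $\Omega^\infty$ of a connective spectrum and the Milnor $\lim^1$ sequence) has connectivity tending to infinity. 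High connectivity is what lets one push $L_R$ through the limit on the level of homotopy groups. Without this, the Postnikov induction does not assemble.

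On the Eilenberg--MacLane case you take a different route. The paper proves the EM case by writing $K(A,n) \simeq \Omega^\infty_* \Sigma^n HA$, applying the \emph{stable} arithmetic fracture square (\cref{lem:stable:arithmetic-fracture-square}), and using that $\Omega^\infty_*$ commutes with limits and that $L_R$ and $L_p$ commute with $\Omega^\infty_*$ on suitably connective inputs (\cref{lem:localization:infinite-loop-space}, \cref{lem:fracture-square:EM-space}). You instead propose an explicit computation with the derived $p$-completion terms $\operatorname{Ext}(\Z/p^\infty, A)$ and $\operatorname{Hom}(\Z/p^\infty, A)$. This can be made to work, but it is more delicate than you indicate: one must verify the pullback condition in \emph{two} adjacent degrees simultaneously (including the boundary map between the Ext-term in degree $n$ and the Hom-term in degree $n-1$), and also check unique $P$-divisibility of kernel and cokernel of $A \to \prod_{p\in P}\operatorname{Ext}(\Z/p^\infty,A)$ for arbitrary (not finitely generated) abelian $A$, which is where the "standard algebraic computation" lives. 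The paper's detour through the stable category avoids these bookkeeping issues; your approach is more elementary but incurs them.
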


The goal of this paper is to generalize this result to a large class of $\infty$-topoi.
Recall from \cite[Section 3]{mattis2024unstable} the definition of the unstable $p$-completion functor $L_p \colon \topos X \to \topos X$
for every $\infty$-topos $\topos X$ and any prime number $p \in \Z$.
In \cite{asok2022localization}, Asok-Fasel-Hopkins defined the unstable $R$-localization functor $L_R \colon \topos X \to \topos X$ on an $\infty$-topos $\topos X$.
We will review its construction in \cref{section:unstable}.

We will define the notion of a locally finite-dimensional cover of an $\infty$-topos in \cref{def:htpydim:cover}.
We say that an $\infty$-topos $\topos X$ admits a locally finite-dimensional cover if there exists such a cover.
This is the generality in which unstable fracture squares for arbitrary nilpotent sheaves exist.
Examples include any $\infty$-topos locally of homotopy dimension $\le n$ (e.g.\ any presheaf $\infty$-topos,
or the category of $G$-anima for a group $G$),
and topoi of the form $\ShvTop{\nis}{\smooth{S}}$ of Nisnevich sheaves 
on the category of smooth schemes over a qcqs base scheme $S$ of finite Krull-dimension,
which appear frequently in motivic homotopy theory.
For proofs that these $\infty$-topoi indeed admit locally finite-dimensional covers, see \cref{section:examples}.

Our main theorem is the following:
\begin{introthm}[\cref{lem:fracture-square:main-thm}]
    Let $\topos X$ be an $\infty$-topos that admits a locally finite-dimensional cover.
    Let $X \in \topos X_*$ be a nilpotent pointed sheaf (see \cite[Definition A.10]{mattis2024unstable} for the definition of nilpotence
    in an $\infty$-topos).
    Then there is a canonical cartesian square 
    \begin{center}
        \begin{tikzcd}
            X \arrow[r] \arrow[d] \arrow[dr, phantom,"\scalebox{1.5}{\color{black}$\lrcorner$}", near start, color=black] &\prod_{p \in P} L_p X \arrow[d] \\
            L_R X \arrow[r] &L_R \prod_{p \in P} L_p X.
        \end{tikzcd}
    \end{center}
\end{introthm}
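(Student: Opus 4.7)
The plan is to reduce the claim, via induction up the principal refinement of a nilpotent pointed sheaf, to the case of Eilenberg--MacLane sheaves $K(A, n)$ with $A \in \AbObj{\topos X}$, and then to deduce this base case from the algebraic arithmetic fracture in the derived $\infty$-category of abelian sheaves.

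Write $\phi_X \colon X \to (L_R X) \times_{L_R \prod_{p\in P} L_p X} \prod_{p\in P} L_p X$ for the canonical comparison map, and let $\mathcal C \subseteq \topos X_*$ denote the class of nilpotent pointed sheaves $X$ for which $\phi_X$ is an equivalence. I would first show that $\mathcal C$ is closed under principal fibrations: given $K(A, k) \to E \to B$ with $B, K(A, k) \in \mathcal C$, each of $\mathrm{id}$, $L_R$, $\prod_{p\in P} L_p$ and $L_R \circ \prod_{p\in P} L_p$ preserves this fiber sequence on nilpotent inputs, so pasting of cartesian squares yields the fracture square for $E$. Combined with the principal refinement $\cdots \to X_{n+1} \to X_n \to \cdots \to X_0 = *$ with $\limil{n} X_n \simeq X$ supplied by \cite{mattis2024unstable}, this reduces the theorem to two subtasks: (i) the base case $X = K(A, n)$ for $A \in \AbObj{\topos X}$, $n \geq 1$; and (ii) closure of $\mathcal C$ under the limit of a principal refinement tower.

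For (i), I would use the identifications $L_R K(A, n) \simeq K(R \otimes A, n)$ and $L_p K(A, n) \simeq K(\complete A, n)$, suitably interpreted. These translate the fracture square for $K(A, n)$ into the algebraic fracture statement
\[
A \simeq (R \otimes A) \times_{R \otimes \prod_{p\in P} \complete A} \prod_{p\in P} \complete A
\]
in the derived $\infty$-category of abelian sheaves, which is a formal consequence of the stable arithmetic fracture for the sphere spectrum in $\Sp(\AbObj{\topos X})$, tensored with $A$.

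For (ii), the difficulty is that $L_R$ and $\prod_{p\in P} L_p$ do not preserve arbitrary limits. The hypothesis that $\topos X$ admits a locally finite-dimensional cover is exactly what is needed here: on such topoi Postnikov-type towers converge and the localization/completion functors have bounded amplitude on Eilenberg--MacLane layers, which lets one interchange these functors with the principal refinement limit, presumably via a $\lim^1$-type analysis controlled by the cohomological dimension bound. I expect this interchange step to be the main technical obstacle of the proof, while step (i) and the principal fibration closure should be essentially formal once the behavior of $L_R$ and $L_p$ on Eilenberg--MacLane sheaves and fiber sequences of nilpotent sheaves is in hand.
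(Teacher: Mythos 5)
Your overall strategy is close to the paper's: reduce to Eilenberg--MacLane sheaves via principal refinement, get the base case from the stable arithmetic fracture square, and control the limit step via the finite-dimensional cover. However, there are two concrete gaps. First, the identification $L_p K(A,n) \simeq K(\complete{A}, n)$ is not correct: the (derived) $p$-completion of the Eilenberg--MacLane spectrum $\Sigma^n HA$ generically has homotopy in two adjacent degrees (a $\lim$ term and a $\lim^1$ term), so $L_p K(A,n)$ is not an Eilenberg--MacLane sheaf. The paper's base case therefore does not pass through such an identification; instead it applies the stable arithmetic fracture square to $\Sigma^n HA$ and then transports along $\Omega^\infty_* \tau_{\ge 1}$, using the fact that $L_R$ commutes with $\Omega^\infty$ on $1$-connective objects (\cref{lem:localization:infinite-loop-space}, \cref{lem:stable:arithmetic-fracture-square-connective-cover}). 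Second, the claim that $\prod_{p} L_p$ ``preserves the fiber sequence'' of a principal fibration is false as stated: the paper only proves $\tau_{\ge 1}\prod_p L_p \Fib{f} \simeq \tau_{\ge 1}\Fib{\prod_p L_p Y \to \prod_p L_p K}$, not the identity without the $\tau_{\ge 1}$'s (\cref{lem:nilpotence-stability:p-comp-prod-fiber}). Your pasting-of-pullbacks argument consequently breaks unless one systematically works with the $\tau_{\ge 1}$-corrected version of the fracture square, which is exactly why the paper sets up the equivalence of the two squares in \cref{lem:fracture-square:canonical-squares} and runs the whole induction on the connected-cover form \eqref{square:connected-canonical}.

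Beyond those gaps, one structural point: a non-truncated nilpotent sheaf does not admit a finite principal refinement, so one cannot literally run a finite induction plus a single limit step. The paper therefore first proves the truncated case by finite induction (\cref{lem:fracture-square:truncated}) and then handles the Postnikov limit $X \simeq \limil{n}\tau_{\le n} X$ separately; the role of the locally finite-dimensional cover is to make the tower $(\tau_{\ge 1}\prod_p L_p \tau_{\le n} X)_n$ ``locally highly connected'' so that $L_R$ commutes with the limit (\cref{lem:htpydim:prod-p-compl-loc-high-con}, \cref{lem:htpydim:R-loc-of-highly-connected}) and $\prod_p L_p$ can be computed on the Postnikov tower (\cref{lem:htpydim:p-comp-of-post-tower}). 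Your sketch correctly guesses that a $\lim^1$-type connectivity estimate controlled by the dimension bound is what makes this interchange work, but it needs to be organized as this two-stage argument, not as a single infinite principal refinement.
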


From this, we can also immediately deduce the following:
\begin{introthm}
    Let $\topos X$ be an $\infty$-topos that admits a locally finite-dimensional cover.
    Let $f \colon X \to Y$ be a morphism of nilpotent pointed sheaves in $\topos X_*$.
    Then $f$ is an equivalence if and only if $f$ is a $p$-equivalence for all $p \in P$ (i.e.\ $L_p(f)$ is an equivalence for all $p \in P$)
    and $f$ is an $R$-local equivalence (i.e.\ $L_R(f)$ is an equivalence).
\end{introthm}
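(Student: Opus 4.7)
The plan is to deduce this as a formal corollary of the preceding fracture square theorem. The \emph{only if} direction is immediate since any functor preserves equivalences: if $f$ is an equivalence then so are $L_R(f)$ and each $L_p(f)$.

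For the converse, I would apply the previous theorem to both $X$ and $Y$, which are nilpotent by hypothesis while $\topos X$ admits a locally finite-dimensional cover by assumption. This produces two canonical cartesian squares, and $f$ induces, by naturality of $L_R$ and $L_p$ and of the fracture square construction, a morphism between them. I would then argue that three of the four relevant corner maps in the induced cube are equivalences: the bottom-left map is $L_R(f)$, an equivalence by hypothesis; the top-right map is $\prod_{p \in P} L_p(f)$, which is an equivalence since each factor $L_p(f)$ is; and the bottom-right map is $L_R$ applied to the top-right map, so it is an equivalence as well. Since both squares are cartesian and pullback in $\topos X$ preserves pointwise equivalences, the remaining map on the top-left corner, which is $f$ itself, must also be an equivalence.

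No real obstacle arises here: once the fracture square is in hand, the statement is essentially tautological. The only mild subtlety to verify is the naturality of the fracture square construction, so that the induced cube is genuinely a morphism of cartesian squares; but this follows from the functoriality of the localization functors $L_p$ and $L_R$ and of the canonical comparison map $X \to L_R X \times_{L_R \prod_{p \in P} L_p X} \prod_{p \in P} L_p X$.
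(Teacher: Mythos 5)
Your proposal is correct and follows exactly the route the paper intends: the paper itself introduces this statement with ``From this, we can also immediately deduce the following,'' treating it as a direct formal consequence of Theorem~A via the naturality of the fracture square. Your filled-in argument — one direction trivial, the other by mapping cartesian squares and observing that three of the four corner maps are equivalences — is the intended reasoning.
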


The assumptions on the $\infty$-topos can be relaxed if one requires the sheaf $X$ to be truncated:
\begin{introthm}[\cref{lem:fracture-square:truncated}]
    Let $\topos X$ be an $\infty$-topos with enough points.
    Let $X \in \topos X_*$ be a nilpotent pointed sheaf which is $n$-truncated for some $n$.
    Then there is a canonical cartesian square 
    \begin{center}
        \begin{tikzcd}
            X \arrow[r] \arrow[d] \arrow[dr, phantom,"\scalebox{1.5}{\color{black}$\lrcorner$}", near start, color=black] &\prod_{p \in P} L_p X \arrow[d] \\
            L_R X \arrow[r] &L_R \prod_{p \in P} L_p X.
        \end{tikzcd}
    \end{center}
\end{introthm}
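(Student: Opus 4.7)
My plan is to verify the cartesian property stalkwise using the enough-points hypothesis, reducing to the classical Bousfield--Kan fracture theorem in $\An_*$. Let
\[
    \phi\colon X \to L_R X \times_{L_R \prod_{p\in P} L_p X} \prod_{p\in P} L_p X
\]
be the canonical comparison map. Since the stalks $x^*\colon \topos X \to \An$ are jointly conservative and left-exact, it suffices to check that $x^*\phi$ is an equivalence for every point $x$. Both nilpotence and $n$-truncation are preserved by stalks (they can be read off the homotopy sheaves, which commute with $x^*$), so $x^*X$ is a nilpotent $n$-truncated pointed anima and the classical Bousfield--Kan theorem applies.

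The key compatibilities needed are $x^*(L_p Y) \simeq L_p(x^*Y)$ and $x^*(L_R Y) \simeq L_R(x^*Y)$ for nilpotent $n$-truncated pointed sheaves $Y$. These should follow from the universal properties of $L_p$ and $L_R$ with respect to $H_*(-,\mathbb{F}_p)$- and $H_*(-,R)$-equivalences respectively (which are preserved by stalks), combined with the fact that in the nilpotent truncated setting both functors admit a concrete Postnikov-tower description that ensures stalks take $p$-complete (resp.\ $R$-local) sheaves to $p$-complete (resp.\ $R$-local) anima. Granted these, the left-exactness of $x^*$ identifies three of the four corners of the stalked square with their classical counterparts.

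The main obstacle is the term $\prod_{p\in P} L_p X$: the stalk functor $x^*$ does not in general commute with infinite products, so $x^*(\prod_p L_p X)$ need not be $\prod_p L_p(x^*X)$. I would circumvent this by reducing via the principal refinement of the Postnikov tower of $X$ to the case $X = K(A, k)$ for $A$ an abelian sheaf, where the fracture square is controlled by an arithmetic fracture sequence for $A$ in the abelian category $\AbObj{\topos X}$. Establishing this abelian-sheaf fracture sequence directly (without invoking stalkwise infinite-product compatibility) is the technical heart of the argument; I would expect it to follow from a careful analysis of $\CompE$ and the $R$-localization on abelian sheaves, mirroring the classical arithmetic fracture sequence for $\Z$. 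The inductive step along the Postnikov tower then propagates the cartesian property by comparing long exact sequences of homotopy sheaves for the two fracture pullbacks, using that $L_p$, $L_R$, and their products preserve the relevant fibration sequences between nilpotent truncated objects.
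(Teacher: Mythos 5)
Your initial stalkwise plan does not work, for the reason you yourself identify: points do not commute with the infinite product $\prod_p L_p$. Your fallback plan---inducting along a principal refinement of the Postnikov tower, with Eilenberg--MacLane sheaves as the base case---is the right strategy and matches the paper's proof of \cref{lem:fracture-square:truncated}. However, two substantive gaps remain.

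First, $\prod_p L_p X$ need not be connected or nilpotent even when $X$ is (an infinite product of connected nilpotent sheaves can fail to be either in a general $\infty$-topos), so running the induction on the square as written does not go through: at each step you need all four corners to remain nilpotent in order to invoke the good behaviour of $L_R$ (compatibility with homotopy sheaves, exactness, commutation with fibers). The paper handles this via \cref{lem:fracture-square:canonical-squares}: the displayed square is cartesian if and only if the modified square with $\tau_{\ge 1}\prod_p L_p X$ and $\tau_{\ge 1}L_R\prod_p L_p X$ in the right column is cartesian, and it is the latter that is proved by induction. The ingredient that makes this possible is \cref{lem:nilpotence-stability:nilpotent-product}, showing $\tau_{\ge 1}\prod_p L_p X$ is nilpotent when $X$ is truncated nilpotent. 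Your proposed ``comparison of long exact sequences'' would not run without this normalization.

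Second, the base case cannot be reduced to an arithmetic fracture sequence for $A$ purely in $\AbObj{\Disc{\topos X}}$. Unstable $p$-completion is a derived functor: $L_p K(A,n)$ generally has nonvanishing homotopy in degree $n+1$ as well as $n$ (a $\operatorname{Hom}(\Z/p^\infty,A)$ contribution alongside $\operatorname{Ext}^1(\Z/p^\infty,A)$), so it is typically not an Eilenberg--MacLane sheaf, and a $1$-categorical fracture sequence for $A$ does not see the upper homotopy sheaf. The paper instead proves the EM case (\cref{lem:fracture-square:EM-space}) by applying the \emph{stable} arithmetic fracture square (\cref{lem:stable:arithmetic-fracture-square-connective-cover}) to $\Sigma^n HA$ in $\spectra X$, then transporting it through $\pLoop$ using the commutation $L_R \pLoop \cong \pLoop L_R$ on $1$-connective spectra (\cref{lem:localization:infinite-loop-space}), the analogous commutation for $L_p$, and the $\tau_{\ge 1}$-identities just discussed. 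This is the technical heart your proposal gestures at (``careful analysis of $\CompE$'') but does not supply.
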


\subsection*{Notation}
We will use the following symbols:
\begin{center}
    \begin{tabular}{ l|l }
        $R$ & a subring of $\Q$ \\ 
        $P$ & the set of prime numbers invertible in $R$ \\  
        $\An$ & the $\infty$-category of anima/spaces/$\infty$-groupoids \\
        $\Disc{\topos X}$ & the subcategory of $0$-truncated objects of an $\infty$-topos $\topos X$ \\
        $\Grp{\Cat C}$ & the group objects in a $1$-category $\Cat C$ with finite products \\ 
        $\AbObj{\Cat C}$ & the abelian group objects in a $1$-category $\Cat C$ with finite products \\
        $L_R$ & the (un)stable $R$-localization functors, see \cref{section:stable,section:unstable} \\
        $L_p$ & the (un)stable $p$-completion functors, see \cite[Sections 2 \& 3]{mattis2024unstable} \\
        $(-) \sslash p$ & the functor given by $X \mapsto \Cofib{X \xrightarrow{p} X}$ on a stable category
    \end{tabular}
\end{center}

\subsection*{Acknowledgement}
I want to thank Tom Bachmann for very helpfull discussions and reading a draft of this article.
I also thank Emma Brink and Timo Weiß for reading a draft of this article.

The author acknowledges support by the Deutsche Forschungsgemeinschaft
(DFG, German Research Foundation) through the Collaborative Research
Centre TRR 326 \textit{Geometry and Arithmetic of Uniformized Structures}, project number 444845124.

\newpage

\section{Stable \texorpdfstring{$R$}{R}-Localization}
\label{section:stable}
Let $\Cat A$ be a presentable additive $\infty$-category (\cite[Definition 2.6]{gepner2016universality}).
The two examples we have in mind are presentable stable $\infty$-categories 
and presentable abelian categories, which are additive by \cite[Proposition 2.8]{gepner2016universality}
(note that the homotopy category of a stable category is triangulated, so in particular additive).

\begin{defn}
    Let $T_{\Cat A}$ be the collection of morphisms 
    of the form \[m_{p, X} \colon X \xrightarrow{\Delta} \oplus_{i = 1}^p X \xrightarrow{\nabla} X,\] 
    i.e.\ given by 
    multiplication-by-$p$ on $X$, with $p \in P$ and $X \in \Cat A$.
    Let $\overline{T}_{\Cat A}$ be the strongly saturated class of morphisms generated by $T_{\Cat A}$.
    We call a morphism in $\overline{T}_{\Cat A}$ an \emph{$R$-local equivalence}.
\end{defn}

\begin{lem} \label{lem:stable:small-generation}
    The saturated class $\overline{T}_{\Cat A}$ is of small generation.
\end{lem}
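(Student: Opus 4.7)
The plan is to reduce the proper class $T_{\Cat A}$ to a set by using presentability of $\Cat A$.

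First I would choose a regular cardinal $\kappa$ such that $\Cat A$ is $\kappa$-presentable, and let $\Cat A_0 \subseteq \Cat A$ denote a (small) set of representatives of $\kappa$-compact objects. Define
\[
T_0 = \set{m_{p, X}}{p \in P,\ X \in \Cat A_0} \subseteq T_{\Cat A}.
\]
This is a set, so it suffices to show that $\overline{T}_0 = \overline{T}_{\Cat A}$. Since $T_0 \subseteq T_{\Cat A}$, one inclusion is automatic, and the reverse inclusion amounts to showing that $m_{p, X} \in \overline{T}_0$ for every $X \in \Cat A$ and every $p \in P$.

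The key input is that in an additive $\infty$-category, $m_{p,-}\colon \id{\Cat A} \Rightarrow \id{\Cat A}$ is a natural transformation of colimit-preserving endofunctors (both the diagonal $\Delta$ and the fold $\nabla$ are natural, and direct sums commute with colimits because $\Cat A$ is additive and presentable). Hence if we write $X \simeq \colim_{i \in I} X_i$ as a $\kappa$-filtered (indeed any) colimit of objects $X_i \in \Cat A_0$, then the morphism $m_{p, X}$ is obtained as the colimit in the arrow category of the morphisms $m_{p, X_i}$. Since $\overline{T}_0$ is strongly saturated, it is closed under colimits in $\operatorname{Fun}(\Delta^1, \Cat A)$ (this is part of the definition of strongly saturated, together with closure under cobase change and $2$-out-of-$3$), so $m_{p, X} \in \overline{T}_0$ as desired.

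The only potentially subtle step is verifying that $m_{p,-}$ genuinely commutes with arbitrary colimits in the $\infty$-categorical sense, which I would justify by decomposing $m_{p, X}$ as the composition $\nabla \circ \Delta$ and noting that both the diagonal and the fold are natural transformations between the functors $\id{\Cat A}$ and $(-)^{\oplus p}$, both of which preserve colimits in a presentable additive $\infty$-category. Everything else is a routine invocation of the closure properties of a strongly saturated class, so I do not anticipate a genuine obstacle; the lemma is essentially the observation that a saturated class generated by a natural family of maps on a presentable category is controlled by its restriction to a set of generators.
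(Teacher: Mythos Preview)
Your proof is correct and follows essentially the same approach as the paper: both reduce to the small set of multiplication-by-$p$ maps on a generating set of $\kappa$-compact objects, using that $m_{p,-}$ is compatible with colimits so that strongly saturated closure recovers all $m_{p,X}$. The paper's proof is just more terse, asserting this reduction without spelling out the colimit argument you give.
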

\begin{proof}
   Since $\Cat A$ is presentable, it is in particular accessible, i.e.\ 
   it is generated under $\kappa$-filtered colimits 
   by a small set $K \subset \Cat A$ of $\kappa$-compact objects for some regular cardinal $\kappa$.
   In particular, we see that $\overline{T}_{\Cat A}$
   is generated by morphisms of the form $m_{p, X}$ with $p \in P$ and $X \in K$,
   which form a small set. 
\end{proof}

\begin{defn}
    An object $X \in \Cat A$ is called \emph{$R$-local}
    if $\Map{\Cat A}{f}{X}$ is an equivalence 
    for all $R$-local equivalences $f \colon Y \to Y'$.
\end{defn}

\begin{lem} \label{lem:stable:functor}
    There is a localization functor $L_R \colon \Cat A \to \Cat A$ (called \emph{$R$-localization})
    with the following properties:
    \begin{itemize}
        \item For a morphism $f \colon Y \to Y'$ the morphism $L_R(f)$ is an equivalence if and only if $f$ is an $R$-local equivalence,
        \item an object $X \in \Cat A$ is $R$-local if and only if the unit $X \xrightarrow{\simeq} L_R X$ is an equivalence, and 
        \item an object $X \in \Cat A$ is $R$-local if and only if $\Map{\Cat A}{f}{X}$ is an equivalence for all $f \in T_{\Cat A}$.
    \end{itemize}
\end{lem}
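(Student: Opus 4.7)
The plan is to reduce the statement to a standard presentability-theoretic existence result for localizations, invoking \cref{lem:stable:small-generation} to supply the required smallness.

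First, by \cref{lem:stable:small-generation}, pick a small set $S \subseteq T_{\Cat A}$ such that $\overline{S} = \overline{T}_{\Cat A}$. Since $\Cat A$ is presentable, a standard result (see HTT 5.5.4.15) produces a reflective localization $L_R \colon \Cat A \to \Cat A$ onto the full subcategory of $S$-local objects, whose class of inverted morphisms coincides with the strongly saturated class generated by $S$. Since that strongly saturated class is $\overline{T}_{\Cat A}$ by construction, $L_R(f)$ is an equivalence if and only if $f \in \overline{T}_{\Cat A}$, i.e.\ if and only if $f$ is an $R$-local equivalence. This is the first bullet. Standard properties of reflective localizations also give that the $S$-local objects are precisely those for which the unit $X \to L_R X$ is an equivalence.

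It remains to match our notion of $R$-local (being local with respect to the entire class $\overline{T}_{\Cat A}$) with both $S$-local objects and $T_{\Cat A}$-local objects; once done, both the second and third bullet points follow. For any object $X \in \Cat A$, consider the class \[W_X \coloneqq \set{f \in \operatorname{Fun}(\Delta^1, \Cat A)}{\Map{\Cat A}{f}{X} \text{ is an equivalence in } \An}.\] A direct check (using that $\Map{\Cat A}{-}{X}$ sends colimits to limits and that equivalences in $\An$ are closed under limits and satisfy $2$-out-of-$3$) shows that $W_X$ is strongly saturated in the sense of HTT 5.5.4.5. Consequently, if $T_{\Cat A} \subseteq W_X$, then $\overline{T}_{\Cat A} \subseteq W_X$, and similarly for $S$. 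Hence being $T_{\Cat A}$-local, being $S$-local, and being $\overline{T}_{\Cat A}$-local ($=$ $R$-local) all coincide, closing out both the second and third bullet points.

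There is no real obstacle here: the content of the lemma is essentially a packaging of \cref{lem:stable:small-generation} with HTT 5.5.4.15 and the strong saturation of $W_X$. The only mild point to verify carefully is the saturation of $W_X$, which is a routine consequence of $\Map{\Cat A}{-}{X}$ being a limit-preserving contravariant functor into $\An$.
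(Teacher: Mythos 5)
Your proof is correct and takes the same approach as the paper, which simply cites HTT 5.5.4.15 together with \cref{lem:stable:small-generation}. Your extra step — verifying via strong saturation of $W_X$ that $T_{\Cat A}$-local, $S$-local, and $\overline{T}_{\Cat A}$-local objects coincide — is a correct and reasonable unpacking of the details left implicit in the paper's one-line citation.
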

\begin{proof}
    This is \cite[Proposition 5.5.4.15]{highertopoi},
    using that $\overline{T}_{\Cat A}$ is of small generation by \cref{lem:stable:small-generation}.
\end{proof}

\begin{lem} \label{lem:stable:R-local-iff-p-invertible}
    An object $E \in \Cat A$ is $R$-local if and only if $m_{p, E}$ is an equivalence for all $p \in P$.
\end{lem}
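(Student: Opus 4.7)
My plan is to apply the third bullet of \cref{lem:stable:functor}, which says that $E$ is $R$-local if and only if $\Map{\Cat A}{m_{p,X}}{E}$ is an equivalence for every $p \in P$ and every $X \in \Cat A$. The whole proof then reduces to the observation that, because $\Cat A$ is additive, multiplication by $p$ is a natural endomorphism of the identity functor: for any morphism $g \colon X \to E$, we have $g \circ m_{p,X} = p \cdot g = m_{p,E} \circ g$. In other words, under the identification given by the Yoneda embedding, precomposition with $m_{p,X}$ on $\Map{\Cat A}{X}{E}$ coincides with postcomposition with $m_{p,E}$.

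For the backward direction, suppose $m_{p,E}$ is an equivalence for all $p \in P$. Then for any $X$, postcomposition with $m_{p,E}$ induces an equivalence on $\Map{\Cat A}{X}{E}$, and by the identification above this is the same as the map induced by $m_{p,X}$. Hence $\Map{\Cat A}{m_{p,X}}{E}$ is an equivalence for all $X$ and $p \in P$, so $E$ is $R$-local.

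For the forward direction, suppose $E$ is $R$-local. Then by the third bullet of \cref{lem:stable:functor}, for every $X \in \Cat A$ the map $\Map{\Cat A}{m_{p,X}}{E}$ is an equivalence, i.e.\ postcomposition with $m_{p,E}$ is an equivalence on $\Map{\Cat A}{X}{E}$. Since this holds for all $X$, the Yoneda lemma implies that $m_{p,E} \colon E \to E$ is itself an equivalence.

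I do not expect a real obstacle here; the only subtlety is making clean use of additivity to identify pre- and postcomposition by the multiplication-by-$p$ maps, which is what makes the Yoneda argument go through in both directions.
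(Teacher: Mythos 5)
Your proposal is correct and takes essentially the same approach as the paper: reduce to the third bullet of \cref{lem:stable:functor}, observe that precomposition with $m_{p,X}$ and postcomposition with $m_{p,E}$ induce the same map on $\Map{\Cat A}{X}{E}$ (the paper spells this out via an explicit commutative diagram with $\Delta$ and $\nabla$, while you appeal to naturality of multiplication by $p$ in an additive $\infty$-category), and conclude by Yoneda.
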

\begin{proof}
    By \cref{lem:stable:functor} $E$ is $R$-local if and only if $\Map{\Cat A}{m_{p, X}}{E}$
    is an equivalence for all $p \in P$ and $X \in \Cat A$.
    Note that $\Map{\Cat A}{m_{p, X}}{E}$ is homotopic to $\Map{\Cat A}{X}{m_{p, E}}$, as both maps 
    are just multiplication by $p$ on the $\mathbb{E}_{\infty}$-group $\Map{\Cat A}{X}{E}$:
    Indeed, we have the following commutative diagram:
    \begin{center}
        \begin{tikzcd}
            \Map{\Cat A}{X}{E} \arrow[d, equal] \arrow[r, "\Delta"] &\Map{\Cat A}{\oplus_{i = 1}^p X}{E} \arrow[d, "\cong"] \arrow[r, "\nabla"] &\Map{\Cat A}{X}{E} \arrow[d, equal] \\
            \Map{\Cat A}{X}{E} \arrow[d, equal] \arrow[r, "\Delta"] &\oplus_{i = 1}^p \Map{\Cat A}{X}{E} \arrow[d, "\cong"] \arrow[r, "\nabla"] &\Map{\Cat A}{X}{E} \arrow[d, equal] \\
            \Map{\Cat A}{X}{E} \arrow[r, "\Delta"] &\Map{\Cat A}{X}{\oplus_{i = 1}^p E} \arrow[r, "\nabla"] &\Map{\Cat A}{X}{E}.
        \end{tikzcd}
    \end{center}
    Here, the composition of the top row is just $\Map{\Cat A}{m_{p, X}}{E}$,
    the composition of the bottom row is $\Map{\Cat A}{X}{m_{p, E}}$,
    and the composition of the middle row is multiplicatiion by $p$ on $\Map{\Cat A}{X}{E}$.
    Hence by the Yoneda lemma, $E$ is $R$-local if and only if $m_{p, E}$
    is an equivalence for all $p \in P$.
\end{proof}

We now want to describe the $R$-localization functor explicitly. If $R \neq \Z$, i.e.\ if $P \neq \emptyset$,
write $N$ for the set $P \times \N$, and $\operatorname{pr}_1 \colon N \to P$ for the first projection.
Enumerate $N$ via some bijective function $s \colon \N \to N$.
There is a functor $\underline{(-)}_\bullet \colon \Cat A \to \Fun{}(\N,\Cat A)$,
that sends an $A \in \Cat A$ to the $\N$-indexed diagram $\underline{A}_\bullet$
with $\underline{A}_n = A$ for all $n$,
and such that the map $\underline{A}_n \to \underline{A}_{n+1}$ is given by multiplication by $\operatorname{pr_1}(s(n))$.

\begin{lem} \label{lem:stable:R-loc-as-colim}
    Suppose that $R \neq \Z$, i.e.\ $P \neq \emptyset$.
    The $R$-localization functor $L_R$ is equivalent to the functor
    \[L'_R \coloneqq \colim{\N} \underline{(-)}_\bullet.\]
\end{lem}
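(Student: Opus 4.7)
The plan is to verify that $L'_R$ satisfies the two properties characterizing $L_R$ in \cref{lem:stable:functor}: (a) the canonical unit $X \to L'_R X$, i.e.\ the structure map from $\underline{X}_0$ into the sequential colimit, is an $R$-local equivalence, and (b) the object $L'_R X$ is $R$-local. These two facts together yield an equivalence $L_R X \simeq L'_R X$ by the universal property of the localization.

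For (a), the unit $X \to L'_R X$ is the transfinite composition of the transition maps $\underline{X}_n \to \underline{X}_{n+1}$, each of which equals $m_{q_n, X}$ with $q_n := \operatorname{pr}_1(s(n)) \in P$ and therefore lies in $T_{\Cat A}$. Since the strongly saturated class $\overline{T}_{\Cat A}$ is closed under transfinite composition, the unit is an $R$-local equivalence.

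For (b), by \cref{lem:stable:R-local-iff-p-invertible} it suffices, for each prime $p \in P$, to show that $m_{p, L'_R X}$ is an equivalence. Because $s$ is a bijection, the set $J := \{n \in \N : q_n = p\}$ is infinite; enumerating its elements as $n_0 < n_1 < \cdots$, the shifted subset $J' := \{n_k + 1 : k \in \N\}$ is cofinal in $\N$, so $L'_R X \simeq \colim{k} \underline{X}_{n_k + 1}$. In this cofinal subdiagram the transition map $\underline{X}_{n_k + 1} \to \underline{X}_{n_{k+1} + 1}$ is multiplication by $p \cdot m_k$, where $m_k := q_{n_k + 1} q_{n_k + 2} \cdots q_{n_{k+1} - 1}$ is the product of the intervening primes. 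My plan is to construct a natural transformation $\alpha \colon \underline{X}|_{J'} \to \underline{X}|_{J'} \circ \sigma$, where $\sigma \colon J' \to J'$ is the shift $n_k + 1 \mapsto n_{k+1} + 1$, whose component at $n_k + 1$ is multiplication by $m_k$; naturality reduces to the identity $m_{k+1} \cdot p m_k = p m_{k+1} \cdot m_k$ of integer-multiplications on $X$. Both composites of $\alpha$ with the constant "multiplication by $p$" natural transformation on $\underline{X}|_{J'}$ equal the transition natural transformation $\underline{X}|_{J'} \to \underline{X}|_{J'} \circ \sigma$, which induces the identity on the colimit because $\sigma$ is a cofinal endofunctor. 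So $\alpha$ descends to a two-sided inverse of $m_{p, L'_R X}$ on the colimit, completing (b).

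The main obstacle is in step (b): producing $\alpha$ and the constant "multiplication by $p$" as genuine morphisms in the functor $\infty$-category $\Fun{}(J', \Cat A)$, not just as pointwise families. This is handled by additivity of $\Cat A$, which canonically endows every endomorphism monoid with a $\Z$-action and makes integer-scalar multiplications natural with respect to all morphisms; the pointwise assignments then assemble uniquely into coherent natural transformations.
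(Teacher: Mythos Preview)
Your argument for (a) is correct and identical to the paper's. The overall strategy for (b) --- pass to a cofinal subdiagram along which every transition factors through $m_p$, then exhibit an inverse on the colimit --- is also the paper's strategy in spirit. The difference is that the paper first invokes \cite[Corollary 4.8]{gepner2016universality} to write $L'_R(A)\simeq L'_R(\S)\otimes A$ via the $\Sp_{\ge 0}$-tensoring, then reduces to $\Sp_{\ge 0}$ and finally to the $1$-category $\Ab$ (using that the $\pi_n$ are jointly conservative and preserve filtered colimits), and only \emph{there} constructs the inverse by hand. You attempt the construction directly in $\Cat A$.

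That is where the gap lies. Producing your $\alpha\colon \underline{X}|_{J'}\to \underline{X}|_{J'}\circ\sigma$ with \emph{varying} components $m_{m_k}$ is genuinely more than what ``integer multiplications are natural with respect to all morphisms'' gives you: naturality of each fixed $m_n$ as an endomorphism of $\id_{\Cat A}$ only produces natural transformations with \emph{constant} integer component. A natural transformation between functors $\N\to\Cat A$ is a functor $\N\times[1]\to\Cat A$; since $\N\times[1]$ is not free on a quiver (the squares must commute), specifying the components and checking $1$-categorical naturality does not determine such a functor in an $\infty$-category. Note also that the obvious attempt --- obtaining $\alpha$ by restricting $\underline{X}_\bullet$ along a poset map $\N\times[1]\to\N$ --- does yield a coherent transformation with the correct components $m_{m_k}$, but its target is $\underline{X}|_{J}\circ\sigma$ rather than $\underline{X}|_{J'}\circ\sigma$, and being transition-like it induces the \emph{identity} on the colimit, not an inverse to $m_p$. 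The same issue recurs when you assert $\alpha\circ m_p\simeq\tau$: in $\Fun{}(\N,\Cat A)$ this is an identification of morphisms, not a componentwise check.

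One clean fix is exactly the paper's: reduce to a $1$-category. Alternatively, you could try to produce a functor $B\N_{>0}\to\Cat A$ sending $\ast\mapsto X$ and $n\mapsto m_{n,X}$ (so that all your diagrams and transformations live over the $1$-category $B\N_{>0}$), but establishing that $\E_1$-map $\N_{>0}\to\End_{\Cat A}(X)$ again requires essentially the $\Sp_{\ge 0}$-module input. As written, the final paragraph does not close the gap.
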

\begin{proof}
    There is a natural morphism $\eta \colon \id{\Cat A} \to L'_R$ given by the canonical inclusion into 
    the colimit $A = A_0 \to \colimil{n} A_n = L'_R(A)$.
    It suffices to show that $\eta$ exhibits $L'_R$ as the $R$-localization functor,
    i.e.\ we have to show that $L'_R(A)$ is $R$-local for every $A$,
    and that the canonical map $\eta \colon A \to L'_R(A)$ is an $R$-local equivalence.

    Since strongly saturated classes 
    are stable under transfinite compositions (as they are closed under colimits in the arrow category),
    it immediately follows that $\eta$ is an $R$-local equivalence (because multiplication by a prime $p \in P$ 
    is an $R$-local equivalence by definition).

    We are left to show that $L'_R(A)$ is $R$-local.
    It suffices to show that $p$ is invertible on $L'_R(A)$ for every prime $p \in P$ by \cref{lem:stable:R-local-iff-p-invertible}.
    So fix a prime $p \in P$. 
    By \cite[Corollary 4.8]{gepner2016universality}, $\Cat A$ is canonically enriched over the category $\Sp_{\ge 0}$ 
    of connective spectra (or equivalently over the category of $\E_{\infty}$-groups).
    Thus, we see that $L'_R(A) \cong L'_R(\S) \otimes A$,
    where $L'_R(\S)$ is the same colimit applied to the sphere spectrum.
    It therefore suffices to show that $p$ is invertible on $L'_R(\S)$.
    Since the homotopy groups $\pi_n \colon \Sp_{\ge 0} \to \Ab$ commute with filtered colimits 
    and are jointly conservative, we are reduced to show that $p$ is invertible 
    on $L'_R(\pi_n(\S))$ for all $n \ge 0$.
    Thus, in particular we are reduced to the case that $\Cat A = \Ab$ is the 1-category of abelian groups.
        
    So we will construct an inverse to the multiplication by $p$ map $\psi \colon L'_R(A) \to L'_R(A)$,
    where $A$ is an abelian group.
    For each $n$ write $\iota_n \colon A \to L'_R(A)$ for the canonical maps into the colimit,
    and $m_j \colon A \to A$ for multiplication by $j \in \N$ on $A$.
    Note that we have 
    \begin{equation*}
        \psi \iota_n = \iota_n m_p.
    \end{equation*}
    We write $k(n)$ for the largest integer smaller then $n$ such that $\operatorname{pr_1}(s(k(n))) = p$.
    Since $p$ appears infinitely many times in $N$, $k(n) \to \infty$ as $n \to \infty$.
    In particular, there is a smallest $m(n) > n$ such that $k(m(n)) \ge n$.
    We let $\alpha(n) \coloneqq \prod_{i = n}^{k(m(n))} \operatorname{pr_1}(s(i))$.
    Let $\phi \colon L'_R(A) \to L'_R(A)$ be the map 
    given on the $n$-th component by 
    \begin{equation*}
        \phi \iota_n \coloneqq \iota_{k(n)} m_{\alpha(n)}
    \end{equation*}
    This is well defined:
    We have to check that $\phi \iota_n = \phi \iota_{n+1} m_{\operatorname{pr_1}(s(n))}$,
    which follows immediately from the definitions.
    Moreover, we see that 
    \begin{equation*}
        \iota_{k(n)} m_p m_{\alpha(n)} = \iota_{k(n)+1} m_{\alpha(n)} = \iota_{n}.
    \end{equation*}
    We now have to check that $\phi$ is an inverse to $\psi$.
    By the universal property of the colimit, we can check this on components:
    \begin{equation*}
        \phi \psi \iota_n = \phi \iota_n m_p = \iota_{k(n)} m_{\alpha(n)} m_p = \iota_n
    \end{equation*}
    and 
    \begin{align*}
        \psi \phi \iota_n = \psi \iota_{k(n)} m_{\alpha(n)} = \iota_{k(n)} m_p m_{\alpha(n)} = \iota_n.
    \end{align*}
    This proves the lemma.
\end{proof}

\begin{cor} \label{lem:stable:R-loc-commutes-with-seq-colim-pres-functors}
    Let $F \colon \Cat A \to \Cat B$ be an additive, sequential-colimit-preserving functor of presentable additive $\infty$-categories.
    Then $L_R F \cong F L_R$.
\end{cor}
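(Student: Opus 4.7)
The plan is to reduce immediately to the explicit model of $L_R$ provided by \cref{lem:stable:R-loc-as-colim}. First, I would dispose of the degenerate case $R = \Z$ (equivalently $P = \emptyset$): here $T_{\Cat A}$ contains no nontrivial morphisms, so $L_R$ is (naturally equivalent to) the identity on both $\Cat A$ and $\Cat B$, and there is nothing to show.

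Assuming $P \neq \emptyset$, I would invoke \cref{lem:stable:R-loc-as-colim} to replace $L_R$ by $L_R' = \colim_\N \underline{(-)}_\bullet$ on both sides. For any $A \in \Cat A$, the diagram $\underline{A}_\bullet$ has value $A$ at every object, with transition maps the multiplication-by-$q_n$ maps $m_{q_n, A}$ for the sequence of primes $q_n = \operatorname{pr}_1(s(n)) \in P$. Since the maps $m_{q, A}$ are built from the diagonal $\Delta$ and codiagonal $\nabla$ for the direct sum $\oplus_{i=1}^q A$, and any additive functor preserves finite direct sums together with their structural inclusions and projections, the functor $F$ carries $m_{q, A}$ to $m_{q, F(A)}$. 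Hence there is a canonical equivalence of $\N$-indexed diagrams $F \circ \underline{(-)}_\bullet \cong \underline{F(-)}_\bullet$ in $\Cat B$.

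Combining this with the hypothesis that $F$ preserves sequential colimits yields the chain of natural equivalences
\begin{equation*}
    F L_R(A) \;\cong\; F\bigl(\colim_{n \in \N} \underline{A}_n\bigr) \;\cong\; \colim_{n \in \N} F(\underline{A}_n) \;\cong\; \colim_{n \in \N} \underline{F(A)}_n \;\cong\; L_R F(A),
\end{equation*}
which assembles into the desired natural equivalence $L_R F \cong F L_R$.

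The only subtle point is the additivity step, i.e.\ verifying that $F$ sends multiplication-by-$p$ maps to multiplication-by-$p$ maps. This is essentially a formal check: by definition an additive functor between presentable additive $\infty$-categories preserves finite direct sums, and hence commutes with $\Delta$ and $\nabla$, so no serious obstacle arises. The rest of the argument is a direct commutation of $F$ past a sequential colimit, which is precisely the hypothesis on $F$.
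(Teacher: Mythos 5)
Your proof is correct and follows essentially the same route as the paper's: dispose of $R = \Z$, replace $L_R$ by the explicit colimit formula from \cref{lem:stable:R-loc-as-colim}, observe that $F$ commutes with $\underline{(-)}_\bullet$ and with sequential colimits, and conclude. You spell out the additivity step (that $F$ sends $m_{p,A}$ to $m_{p,F(A)}$ because it preserves $\Delta$ and $\nabla$), which the paper compresses into the phrase ``the left diagram commutes by definition''; otherwise the arguments are identical.
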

\begin{proof}
    If $R = \Z$, then $L_R$ is just the identity functor, which clearly commutes with $F$.

    If $R \neq \Z$, we have the following diagram:
    \begin{center}
        \begin{tikzcd}
            \Cat A \arrow[d, "F"] \arrow[r, "\underline{(-)}_\bullet"] &\Fun{}(\N, \Cat A) \arrow[d, "F"] \arrow[r, "\colimil{}"] &\Cat A \arrow[d, "F"] \\
            \Cat B \arrow[r, "\underline{(-)}_\bullet"] &\Fun{}(\N, \Cat B) \arrow[r, "\colimil{}"] &\Cat B.
        \end{tikzcd}
    \end{center}
    The left diagram commutes by definition, and the right because $F$ preserves sequential colimits.
    By \cref{lem:stable:R-loc-as-colim} we know that the composition of the rows is the functor $L_R$.
    This immediately implies the corollary.
\end{proof}

If $\Cat D$ is a presentable stable $\infty$-category with a t-structure $\tstruct{\Cat D}$ \cite[Definition 1.2.1.4]{higheralgebra},
we will write $\heart{\Cat D} \coloneqq \tcon{\Cat D} \cap \tcocon{\Cat D}$ 
for the \emph{heart} of this t-structure (this is a abelian category, so in particular additive),
and $\pi_n \colon \Cat D \to \heart{\Cat D}$ for the homotopy object functors.
If the t-structure is accessible \cite[Definition 1.4.4.12]{higheralgebra} and compatible with filtered colimits, i.e.\ $\tcocon{\Cat D}$ is stable under filtered colimits,
then the heart $\heart{\Cat D}$ is itself presentable, see \cite[Remark 1.3.5.23]{higheralgebra}.
Moreover, in this case $\pi_n(-)$ commutes with filtered colimits, whence $L_R \pi_n(-) \cong \pi_n(L_R(-))$ by \cref{lem:stable:R-loc-commutes-with-seq-colim-pres-functors}.

The main result about stable $R$-localization is the following characterization via the homotopy groups:
\begin{prop} \label{lem:stable:R-local-on-htpy}
    Suppose that $\Cat D$ is a stable $\infty$-category with an accessible t-structure $\tstruct{\Cat D}$ compatible with filtered colimits,
    and that the t-structure is separated (i.e.\ $\pi_n(X) = 0$ for all $n$ already implies $X = 0$).

    A morphism $f \colon E \to F$ in $\Cat D$ is an $R$-local equivalence if and only if $\pi_n(f)$ is an $R$-local equivalence in $\heart{\Cat D}$ for all $n \in \Z$.

    Moreover, an object $E \in \Cat D$ is $R$-local if and only if $\pi_n(E)$ is $R$-local in $\heart{\Cat D}$ for all $n \in \Z$.

    In particular, $L_R E \cong F$ if and only if $L_R \pi_n(E) \cong \pi_n(F)$ via $f$ for all $n \in \Z$.
\end{prop}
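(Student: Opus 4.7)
The plan is to combine the explicit colimit description of $L_R$ (\cref{lem:stable:R-loc-as-colim}) with the hypothesis that the t-structure is compatible with filtered colimits. If $P = \emptyset$ then $R = \Z$ and $L_R$ is the identity on both $\Cat D$ and $\heart{\Cat D}$, so all three statements are tautologies. Henceforth assume $P \neq \emptyset$.

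The key auxiliary fact is that the homotopy object functors $\pi_n \colon \Cat D \to \heart{\Cat D}$ are additive (they are compositions of shifts and heart-truncations, which are additive functors between stable/additive $\infty$-categories) and commute with filtered colimits (as recalled just before the statement, since the t-structure is accessible and compatible with filtered colimits). Hence \cref{lem:stable:R-loc-commutes-with-seq-colim-pres-functors} applies to $F = \pi_n$ and yields a natural equivalence $\pi_n L_R \simeq L_R \pi_n$. Inspecting the explicit formula $L_R = \colim{\N} \underline{(-)}_\bullet$ from \cref{lem:stable:R-loc-as-colim}, this equivalence sends the unit $\pi_n(\eta_E) \colon \pi_n E \to \pi_n L_R E$ to the unit $\eta_{\pi_n E} \colon \pi_n E \to L_R \pi_n E$, because $\pi_n$ applied to the inclusion $E \to \colimil{n} \underline{E}_\bullet$ at index $0$ is the inclusion $\pi_n E \to \colimil{n} \underline{\pi_n E}_\bullet$ at index $0$.

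With this in hand I would first prove the object statement. One direction is immediate: if $E$ is $R$-local, then $\eta_E$ is an equivalence, so $\pi_n(\eta_E) = \eta_{\pi_n E}$ is an equivalence and $\pi_n E$ is $R$-local. Conversely, if every $\pi_n E$ is $R$-local then $\eta_{\pi_n E} = \pi_n(\eta_E)$ is an equivalence for every $n$, so by separation of the t-structure (which makes the collection $\{\pi_n\}_{n \in \Z}$ jointly conservative on morphisms, via the fiber sequence characterization of equivalences) $\eta_E$ is an equivalence, i.e.\ $E$ is $R$-local. The morphism statement is analogous: $f$ is an $R$-local equivalence iff $L_R(f)$ is an equivalence iff each $\pi_n L_R(f) \simeq L_R \pi_n(f)$ is an equivalence (again by separation) iff $\pi_n(f)$ is an $R$-local equivalence for every $n$.

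The final clause then follows formally: the datum $L_R E \cong F$ via $f$ means that $f$ exhibits $F$ as the $R$-localization of $E$, which by the universal property of $L_R$ is equivalent to $F$ being $R$-local together with $f$ being an $R$-local equivalence. By the two parts just proved, this is equivalent to $\pi_n(F)$ being $R$-local and $\pi_n(f)$ being an $R$-local equivalence for every $n$, which says exactly that $\pi_n(f)$ exhibits $\pi_n(F)$ as $L_R \pi_n(E)$. I do not foresee any genuine obstacle; the only point that requires a moment of care is the compatibility of $\pi_n L_R \simeq L_R \pi_n$ with the respective unit maps, and that is transparent from the colimit formula.
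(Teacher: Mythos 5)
Your argument is correct and follows essentially the same route as the paper: reduce everything to the natural equivalence $\pi_n L_R \simeq L_R \pi_n$ obtained from \cref{lem:stable:R-loc-commutes-with-seq-colim-pres-functors}, and then conclude via separatedness of the t-structure, which makes the $\pi_n$ jointly conservative. The extra care you take in checking that this equivalence is compatible with the respective unit maps is a worthwhile point that the paper elides (it can also be circumvented by noting that an $R$-local equivalence between $R$-local objects is an equivalence), and your explicit dispatch of the $R = \Z$ case is harmless since the corollary already covers it.
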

\begin{proof}
    We have that $f$ is an $R$-local equivalence if and only if $L_R(f)$ is an equivalence.
    By separatedness, this is equivalent to $\pi_n(L_R f)$ being an equivalence for all $n$.
    As $\pi_n(L_R f) \cong L_R \pi_n(f)$ by \cref{lem:stable:R-loc-commutes-with-seq-colim-pres-functors}, 
    this is equivalent to $\pi_n(f)$ being an $R$-local equivalence for all $n$.
    This proves the first claim.

    For the second claim, note that $E$ is $R$-local if and only if $E \xrightarrow{\simeq} L_R(E)$ via the unit.
    By separatedness, this is equivalent to $\pi_n(E) \xrightarrow{\simeq} \pi_n(L_R E)$ for all $n$,
    which is equivalent to $\pi_n(E) \xrightarrow{\simeq} L_R \pi_n(E)$ being an equivalence for all $n$.
    But this just means that $\pi_n(E)$ is $R$-local for all $n$.
    This proves the proposition.
\end{proof}

\section{Unstable \texorpdfstring{$R$}{R}-Localization}
\label{section:unstable}
Let $\topos X$ be an $\infty$-topos \cite[Definition 6.1.0.2]{highertopoi}.
In this section, we will prove some results about the $R$-localization 
functor in $\topos X$, defined by Asok-Fasel-Hopkins \cite{asok2022localization}.
We will start by recalling its construction.

\begin{defn}
    For $p$ a prime write
    \[\rho_{p,1} \colon S^1 \cong B\Z \to B\Z \cong S^1\]
    induced by the morphism $p \colon \Z \to \Z$.
    Write $\rho_{p,n} \colon S^n \to S^n$ for the map 
    \[\rho_{p,n} \coloneqq \rho_{p,1} \wedge \id{S^{n-1}} \colon S^n \cong S^1 \wedge S^{n-1} \to S^1 \wedge S^{n-1} \cong S^n\]. 
\end{defn}

\begin{defn}
    Let $S$ be the collection of morphisms in $\topos X$ 
    given by
    \begin{equation*}
        \rho_{p,n,U} \coloneqq \rho_{p,n} \times \id{U} \colon S^n \times U \to S^n \times U \text{ for } p \in P, n \ge 1 \text{ and } U \in \topos X
    \end{equation*}
    and denote by $\overline{S}$ the strongly saturated class of morphisms of $\topos X$
    generated by $S$. 
    We call a morphism in $\overline{S}$ an \emph{$R$-local equivalence}.
\end{defn}

\begin{lem} \label{lem:localization:small-generation}
    The saturated class $\overline{S}$ is of small generation.
\end{lem}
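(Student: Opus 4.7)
The plan is to mirror the strategy already used in the proof of \cref{lem:stable:small-generation}: use presentability of $\topos X$ to reduce $U$ to a small set of generators. Since $\topos X$ is presentable, it is in particular accessible, so there exists a regular cardinal $\kappa$ and a small set $K \subseteq \topos X$ of $\kappa$-compact objects such that every object of $\topos X$ is a $\kappa$-filtered colimit of objects in $K$. I would consider the small collection
\[S' \coloneqq \set{\rho_{p,n,U}}{p \in P,\ n \ge 1,\ U \in K},\]
and prove that $\overline{S'} = \overline{S}$. Since $S' \subseteq S$, the inclusion $\overline{S'} \subseteq \overline{S}$ is automatic, so it suffices to show $S \subseteq \overline{S'}$, i.e.\ that $\rho_{p,n,V} \in \overline{S'}$ for an arbitrary $V \in \topos X$.

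To this end, write $V \cong \colimil{i \in I} U_i$ for some $\kappa$-filtered diagram $I \to K$. The assignment $U \mapsto \rho_{p,n,U}$ is functorial in $U$, giving a diagram $I \to \topos X^{\Delta^1}$. Since colimits in the arrow category are computed pointwise, and since in an $\infty$-topos the functor $S^n \times (-)$ preserves colimits (it is a left adjoint, because $\topos X$ is cartesian closed), the colimit of this diagram in $\topos X^{\Delta^1}$ is precisely $\rho_{p,n,V}$. As strongly saturated classes are by definition closed under small colimits in the arrow category, and each $\rho_{p,n,U_i}$ lies in $\overline{S'}$, it follows that $\rho_{p,n,V} \in \overline{S'}$, as desired.

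The argument is routine; the only point that warrants care is ensuring that the product $S^n \times (-)$ genuinely preserves the relevant colimits and that the pointwise colimit in $\topos X^{\Delta^1}$ is computed as expected. Both follow from presentability and cartesian closedness of the $\infty$-topos $\topos X$, so there is no real obstacle.
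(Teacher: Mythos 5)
Your proof is correct and takes essentially the same route as the paper: pick a small set of objects generating $\topos X$ under colimits, reduce to $\rho_{p,n,U}$ with $U$ in that set, and use that $S^n \times (-)$ preserves colimits together with closure of strongly saturated classes under colimits in the arrow category. The paper's version is just a terser statement of the same argument.
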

\begin{proof}
    Let $(U_i)_{i \in I}$ be a small set of generators of $\topos X$,
    which exists since $\topos X$ is presentable.
    Then $\overline{S}$ is generated by the small set 
    of morphisms of the form $\rho_{p,n,U_i}$
    with $p \in P$, $n \ge 1$ and $i \in I$,
    since the product $- \times -$ commutes with colimits 
    in each variable (colimits are universal in $\infty$-topoi, see \cite[Proposition 6.1.0.1]{highertopoi}).
\end{proof}

\begin{defn}
    An object $X \in \topos X$ is called \emph{$R$-local}
    if $\Map{\topos X}{f}{X}$ is an equivalence 
    for all $R$-local equivalences $f \colon Y \to Y'$.
\end{defn}

\begin{lem} \label{lem:localization:functor}
    There is a localization functor $L_R \colon \topos X \to \topos X$ (called \emph{$R$-localization})
    with the following properties:
    \begin{itemize}
        \item Let $f \colon Y \to Y'$ be a morphism. Then $L_R(f)$ is an equivalence if and only if $f$ is an $R$-local equivalence,
        \item an object $X \in \topos X$ is $R$-local if and only if $X \cong L_R X$, and 
        \item an object $X \in \topos X$ is $R$-local if and only if $\Map{\topos X}{f}{X}$ is an equivalence for all $f \in S$.
    \end{itemize}
\end{lem}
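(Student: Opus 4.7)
The plan is to follow the exact same template as the proof of \cref{lem:stable:functor}. Since $\topos X$ is presentable and $\overline{S}$ is of small generation by \cref{lem:localization:small-generation}, I would invoke \cite[Proposition 5.5.4.15]{highertopoi} directly. This yields an accessible reflective subcategory of $\topos X$ consisting of the $S$-local objects, together with a left adjoint $L_R \colon \topos X \to \topos X$. By the general theory of accessible reflective localizations, a morphism $f$ is inverted by $L_R$ precisely when $f$ lies in $\overline{S}$, and an object $X$ is $R$-local precisely when the unit $X \to L_R X$ is an equivalence. This gives items (1) and (2).

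For item (3), the direction ``$R$-local $\Rightarrow$ $\Map{\topos X}{f}{X}$ is an equivalence for all $f \in S$'' is immediate from $S \subseteq \overline{S}$. For the converse, fix $X$ and consider the class
\[
W_X \coloneqq \set{g \in \topos X^{\Delta^1}}{\Map{\topos X}{g}{X} \text{ is an equivalence}}.
\]
I would check directly that $W_X$ is strongly saturated: it is closed under 2-out-of-3, cobase change, and small colimits in the arrow category, since $\Map{\topos X}{-}{X}$ sends colimits in $\topos X$ to limits of anima. Therefore if $S \subseteq W_X$, then also $\overline{S} \subseteq W_X$, i.e.\ $X$ is $R$-local.

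The hard part is essentially zero: all real content has already been packaged into \cref{lem:localization:small-generation} and into the $\infty$-categorical small object argument of \cite[Proposition 5.5.4.15]{highertopoi}. The only thing one has to be slightly careful about is verifying that $W_X$ is strongly saturated, but this is a standard fact about representable functors on an $\infty$-topos and proceeds identically to the stable case.
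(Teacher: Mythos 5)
Your proof is correct and takes exactly the same route as the paper: both cite \cite[Proposition 5.5.4.15]{highertopoi} using the small generation from \cref{lem:localization:small-generation}. The extra verification you give for item (3) — that the class of maps $g$ with $\Map{\topos X}{g}{X}$ an equivalence is strongly saturated — is a correct unwinding, but it is already built into the cited proposition (and is recorded separately as \cite[Proposition 5.5.4.16]{highertopoi}), so the paper simply omits it.
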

\begin{proof}
    This is \cite[Proposition 5.5.4.15]{highertopoi},
    using that $\overline{S}$ is of small generation, see \cref{lem:localization:small-generation}.
\end{proof}
Our first goal is to show that $R$-local equivalences induce isomorphisms on $\pi_0$.
For this, we need the following well-known fact:
\begin{lem} \label{lem:localization:general-pi-0-eq}
    Let $\Cat C_1$ and $\Cat C_2$ be presentable $\infty$-categories, 
    and let $\overline{T}$ be a strongly saturated class of morphisms in $\Cat C_1$, 
    generated by a class $T$.
    If $F \colon \Cat C_1 \to \Cat C_2$ is a colimit-preserving functor such that $F(f)$ is an equivalence for every $f \in T$,
    then $F(f)$ is an equivalence for every $f \in \overline{T}$.
\end{lem}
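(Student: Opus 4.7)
The plan is to consider the class
\[
T' \coloneqq \{f \in \operatorname{Fun}(\Delta^1, \Cat C_1) \mid F(f) \text{ is an equivalence in } \Cat C_2\},
\]
and show that $T'$ is itself a strongly saturated class. Since $T' \supseteq T$ by hypothesis, the minimality of $\overline{T}$ among strongly saturated classes containing $T$ will force $\overline{T} \subseteq T'$, which is precisely the statement of the lemma.

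The bulk of the work is verifying that $T'$ satisfies the three defining properties of a strongly saturated class (\cite[Definition 5.5.4.5]{highertopoi}): stability under cobase change, 2-out-of-3, and closure under small colimits in the arrow category. For cobase change, I would use that $F$ preserves pushouts (as a colimit-preserving functor), so that the pushout of $f \in T'$ along any map is sent by $F$ to a pushout of an equivalence, which is again an equivalence in $\Cat C_2$. The 2-out-of-3 property for $T'$ is immediate from the 2-out-of-3 property for equivalences in $\Cat C_2$. Finally, for closure under small colimits in $\operatorname{Fun}(\Delta^1, \Cat C_1)$, I would observe that if $\alpha \colon I \to \operatorname{Fun}(\Delta^1, \Cat C_1)$ is a small diagram whose pointwise values lie in $T'$, then $F$ sends the colimit of $\alpha$ to the colimit of $F \circ \alpha$ (again because $F$ is colimit-preserving), and this colimit is a colimit of equivalences, hence an equivalence (since equivalences in any presentable $\infty$-category are stable under colimits in the arrow category, colimits in $\operatorname{Fun}(\Delta^1, \Cat C_2)$ being computed pointwise).

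There is no real obstacle: each verification reduces to a standard property of equivalences combined with colimit-preservation of $F$. The only mild subtlety is keeping track of the fact that in an $\infty$-category, one must check these closure properties with respect to homotopy pushouts and $\infty$-categorical colimits, but this causes no difficulty because $\Cat C_2$ is presentable and equivalences are automatically closed under such operations.
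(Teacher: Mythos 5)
Your proposal is correct and takes essentially the same approach as the paper: define $T'$ as the preimage under $F$ of the equivalences, show it is strongly saturated, and conclude by minimality of $\overline{T}$. The only difference is that the paper cites \cite[Proposition 5.5.4.16 and Example 5.5.4.9]{highertopoi} for the strong saturation of $T'$, whereas you unwind that citation by directly verifying the three defining closure properties.
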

\begin{proof}
    Let $T'$ be the class of morphisms $f$ such that $F(f)$ is an equivalence,
    i.e.\ $T'$ is the preimage under $F$ of the class of equivalences in $\Cat C_2$.
    It follows from \cite[Proposition 5.5.4.16]{highertopoi} that $T'$
    is strongly saturated 
    (note that the class of equivalences is strongly saturated by \cite[Example 5.5.4.9]{highertopoi}).
    In particular, since $T \subset T'$ by assumption,
    we conclude that $\overline{T} \subset T'$.
\end{proof}
As a corollary, we obtain:
\begin{cor} \label{lem:localization:pi-0-eq}
    An $R$-local equivalence $f \colon X \to Y$ induces an isomorphism
    $\pi_0(f) \colon \pi_0(X) \to \pi_0(Y)$.
\end{cor}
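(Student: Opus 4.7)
The plan is to apply \cref{lem:localization:general-pi-0-eq} with $F$ the $0$-truncation functor $\tau_{\le 0} \colon \topos X \to \Disc{\topos X}$, regarded as the $\pi_0$-sheaf functor. Since $\tau_{\le 0}$ is left adjoint to the fully faithful inclusion of $0$-truncated objects into $\topos X$, it is in particular colimit-preserving, so the hypothesis of the lemma is satisfied. It therefore suffices to verify that $\tau_{\le 0}$ sends each generator $\rho_{p,n,U}$ of the strongly saturated class $\overline{S}$ to an equivalence.

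For this step I would use the fact that in any $\infty$-topos the truncation functor $\tau_{\le 0}$ is left exact, and hence preserves finite products. Thus $\tau_{\le 0}(S^n \times U) \simeq \tau_{\le 0}(S^n) \times \tau_{\le 0}(U)$, and since $S^n$ is connected for every $n \ge 1$, we have $\tau_{\le 0}(S^n) \simeq \ast$. Under these identifications $\tau_{\le 0}(\rho_{p,n,U})$ becomes the identity on $\tau_{\le 0}(U) = \pi_0(U)$, which is visibly an equivalence.

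The main conceptual point, and the only mild obstacle, is choosing the target category so as to present $\pi_0$ as a colimit-preserving functor: landing in $\Disc{\topos X}$ via the truncation $\tau_{\le 0}$ makes this automatic from the adjoint functor theorem, avoiding any worry about whether a set-valued $\pi_0$ commutes with the relevant colimits. Once this reformulation is in place, the verification on generators is immediate from the connectivity of $S^n$ for $n \ge 1$, and \cref{lem:localization:general-pi-0-eq} then upgrades this to all $R$-local equivalences, yielding the corollary.
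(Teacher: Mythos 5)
Your proposal is correct and follows essentially the same route as the paper: apply \cref{lem:localization:general-pi-0-eq} to the colimit-preserving functor $\tau_{\le 0}\colon \topos X \to \Disc{\topos X}$, compute $\pi_0$ of a product as the product of $\pi_0$'s, and conclude from the connectedness of $S^n$ for $n\ge 1$.

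One small slip: you justify $\tau_{\le 0}(S^n\times U)\simeq \tau_{\le 0}(S^n)\times\tau_{\le 0}(U)$ by asserting that $\tau_{\le 0}$ is left exact. Truncation functors on an $\infty$-topos are \emph{not} left exact in general (for instance in $\An$ the pullback $\Omega S^1\simeq \Z$ of $\ast\to S^1\leftarrow\ast$ has $\pi_0\cong\Z$, whereas the pullback of the truncations is a point). What is true, and all you actually use, is that $\tau_{\le n}$ preserves finite products in any $\infty$-topos (\cite[Lemma 6.5.1.2]{highertopoi}). With that citation in place of the left exactness claim, your argument matches the paper's.
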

\begin{proof}
    The functor $\pi_0 \cong \tau_{\le 0} \colon \topos X \to \Disc{\topos X}$ preserves 
    colimits, as it is left adjoint to the inclusion.
    Whence by \cref{lem:localization:general-pi-0-eq}, 
    it suffices to show that $\pi_0(\rho_{p,n,U})$ is an isomorphism for all $p$, $n$ and $U$.
    But note that $\pi_0(\rho_{p,n,U})$ is the endomorphism on $\pi_0(S^n \times U) \cong \pi_0(S^n) \times \pi_0(U)$
    induced by $\rho_{p,n}$ on $\pi_0(S^n)$ and by the identity on $\pi_0(U)$.
    Since $S^n$ is connected, the result follows.
\end{proof}

Recall the following results from \cite{asok2022localization}:
\begin{lem} \label{lem:localization:R-loc-on-points}
    For a point $s$ of $\topos X$, there is a canonical equivalence $L_R s^* \cong s^* L_R$.

    If $\Cat S$ is a conservative family of points, then an object $X \in \topos X$ is $R$-local 
    if and only if $s^* X$ is $R$-local for all points $s \in \Cat S$.
    Similarly, a morphism $f \colon X \to Y$ in $\topos X$ is an $R$-local equivalence 
    if and only if $s^* f$ is an $R$-local equivalence for all $s \in \Cat S$.
\end{lem}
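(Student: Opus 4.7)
The plan is to prove $L_R s^* \simeq s^* L_R$ by showing that $s^*$ sends $R$-local equivalences to $R$-local equivalences \emph{and} preserves $R$-local objects; the canonical equivalence then follows formally from the universal property of $L_R$ on $\An$, and the statements about a conservative family $\Cat S$ reduce to this via standard arguments.

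For the first part, I would verify $s^*(\rho_{p,n,U}) \simeq \rho_{p,n,s^*U}$. Since $s$ is a geometric morphism, $s^*$ is left exact and preserves all colimits. The constant sheaf functor $\const \colon \An \to \topos X$ is also left-exact and colimit-preserving, and in $\topos X$ the sphere $S^n$ is built from $\const(\Z)$ via the bar construction and smash product, which are combinations of finite limits and colimits; hence $S^n \simeq \const(S^n)$ in $\topos X$, and similarly $\rho_{p,n}$ is the image under $\const$ of $\rho_{p,n}$ in $\An$. The composite $s^* \circ \const$ is a left-exact colimit-preserving endofunctor of $\An$, so it is necessarily equivalent to the identity. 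Thus $s^*(S^n) \simeq S^n$, $s^*(\rho_{p,n}) \simeq \rho_{p,n}$, and $s^*(\rho_{p,n,U}) \simeq \rho_{p,n,s^*U}$ (using that $s^*$ preserves products). An application of \cref{lem:localization:general-pi-0-eq} then shows that $s^*$ sends every morphism in $\overline S$ to an $R$-local equivalence.

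Next, I would show that $s^*$ preserves $R$-local objects. By \cref{lem:localization:functor} combined with the internal-hom adjunction $\Map{\topos X}{S^n \times U}{Y} \simeq \Map{\topos X}{U}{Y^{S^n}}$, the object $Y$ is $R$-local iff the internal map $Y^{\rho_{p,n}} \colon Y^{S^n} \to Y^{S^n}$ is an equivalence for every $p \in P$ and $n \ge 1$. The crucial observation is that $(-)^{S^n}$ is a finite limit: from the unreduced suspension pushout $S^n \simeq \ast \sqcup_{S^{n-1}} \ast$ one obtains inductively $Y^{S^n} \simeq Y \times_{Y^{S^{n-1}}} Y$, starting from $Y^{S^0} \simeq Y \times Y$. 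Since $s^*$ preserves finite limits, it commutes with $(-)^{S^n}$, so $s^*(Y^{\rho_{p,n}}) \simeq (s^*Y)^{\rho_{p,n}}$ remains an equivalence and $s^*Y$ is $R$-local. The main subtle point is exactly this compatibility of $s^*$ with the cotensor $(-)^{S^n}$, which I handle via the finite-pullback presentation rather than any general preservation principle.

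Combining the two steps, applying $s^*$ to the unit $X \to L_R X$ produces an $R$-local equivalence with $R$-local target, which by the universal property of $L_R$ on $\An$ exhibits $s^* L_R X$ as $L_R s^* X$ naturally in $X$. For a conservative family $\Cat S$, the $R$-local direction of the object statement is Step 2; for the converse, if each $s^* X$ is $R$-local then $s^* X \to s^* L_R X$ is an $R$-local equivalence between $R$-local objects, hence an equivalence, and conservativity forces $X \to L_R X$ itself to be an equivalence. The statement for morphisms follows by the analogous chain: $f$ is an $R$-local equivalence iff $L_R f$ is an equivalence iff each $s^* L_R f \simeq L_R s^* f$ is an equivalence (by conservativity and the first part) iff each $s^* f$ is an $R$-local equivalence.
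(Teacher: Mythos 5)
Your proof is correct but takes a genuinely different route from the paper: the paper simply cites \cite[Proposition~2.3.6~(1)]{asok2022localization} for the equivalence $L_R s^* \cong s^* L_R$ (remarking that the proof there works even when $\topos X$ is not $1$-localic) and declares the remaining two statements immediate consequences, whereas you give a self-contained argument. Your decomposition---first showing that $s^*$ carries $R$-local equivalences to $R$-local equivalences by tracking the generators $\rho_{p,n,U}$ through $s^*\circ\const\simeq\id{\An}$, and then that $s^*$ preserves $R$-local objects by exhibiting the cotensor $(-)^{S^n}$ as an iterated finite pullback---reconstructs the underlying mechanism, and the finite-pullback presentation of $Y^{S^n}$ is indeed the right device for making a left-exact $s^*$ commute with a cotensor by a finite anima, since $s^*$ does not preserve internal homs in general. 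One small repair is needed in the first step: \cref{lem:localization:general-pi-0-eq} as stated only concludes that a colimit-preserving $F$ sends $\overline{T}$ to \emph{equivalences}, so to deduce that $s^*$ sends $\overline{S}$ to $R$-local equivalences in $\An$ you should apply that lemma to the composite $L_R\circ s^*\colon\topos X\to\An$ (still colimit-preserving, as $L_R$ is a left adjoint), or alternatively note that the proof of that lemma works verbatim with ``equivalences'' replaced by any strongly saturated class in the target. With that adjustment, the remainder of your argument (the universal-property identification of $L_R s^*X$, and the reductions for a conservative family $\Cat S$) is sound.
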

\begin{proof}
    The first statment is \cite[Proposition 2.3.6 (1)]{asok2022localization},
    note that their proof is working also if $\topos X$ is not $1$-localic.
    The last statment follows immediately from the first.
\end{proof}

We get the following as an immediate corollary:
\begin{cor} \label{lem:localization:R-loc-geom-enough-points}
    Let $f^* \colon \topos X \rightleftarrows \topos Y \colon f_*$
    be a geometric morphism of $\infty$-topoi.
    Suppose that $\topos Y$ has enough points.
    Then there is a canonical equivalence $L_R f^* \cong f^* L_R$.
\end{cor}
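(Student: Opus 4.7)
By the universal property of $L_R$ in $\topos Y$ (\cref{lem:localization:functor}), a canonical equivalence $L_R f^* X \simeq f^* L_R X$ natural in $X \in \topos X$ is the same data as, for each $X$, a proof that the map $f^* \eta_X \colon f^* X \to f^* L_R X$ obtained by applying $f^*$ to the unit $\eta_X \colon X \to L_R X$ is an $R$-local equivalence landing in an $R$-local object. I therefore plan to verify these two conditions separately; naturality in $X$ then comes for free from the uniqueness of factorizations through the $R$-localization.

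For the first condition, I would show that $f^*$ sends every $R$-local equivalence in $\topos X$ to an $R$-local equivalence in $\topos Y$. Since $f^*$ is a left adjoint and $L_R$ preserves colimits, the composite $L_R \circ f^* \colon \topos X \to \topos Y$ preserves colimits, so by \cref{lem:localization:general-pi-0-eq} it suffices to check that $f^*$ carries the generators $\rho_{p,n,U}$ to $R$-local equivalences in $\topos Y$. Because $f^*$ is the inverse image of a geometric morphism it preserves finite products and commutes with the constant-sheaf functor from $\An$, so $f^*(S^n) \simeq S^n$ and $f^*(\rho_{p,n,U}) \simeq \rho_{p,n, f^* U}$, which is already in the generating set $S$ for $\topos Y$.

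For the second condition I would invoke the enough-points assumption on $\topos Y$. By \cref{lem:localization:R-loc-on-points} it suffices to check that $s^*(f^* L_R X)$ is $R$-local in $\An$ for every $s$ in a conservative family of points of $\topos Y$. The composite geometric morphism $f \circ s \colon \An \to \topos X$ is a point of $\topos X$ with inverse image $s^* f^*$, so applying \cref{lem:localization:R-loc-on-points} in $\topos X$ gives $s^* f^* L_R X \simeq L_R (s^* f^* X)$, which is tautologically $R$-local in $\An$. Combining the two steps, $f^* \eta_X$ is an $R$-local equivalence with $R$-local target, hence exhibits $f^* L_R X$ as $L_R f^* X$.

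The main bookkeeping is ensuring that $f^*$ commutes with the constant-sheaf functor (so that spheres and the maps $\rho_{p,n,U}$ are preserved on the nose) and that the composite of $s$ with $f$ is genuinely a point of $\topos X$ whose stalk functor is $s^* \circ f^*$; both are standard properties of the $\infty$-category of $\infty$-topoi, so once these functorial identifications are invoked the remainder of the argument is mechanical.
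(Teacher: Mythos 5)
Your treatment of the second condition ($R$-locality of $f^* L_R X$) is identical to the paper's: pass to stalks in $\topos Y$, observe that $s^* f^*$ is a point of $\topos X$, and invoke \cref{lem:localization:R-loc-on-points}. Where you diverge is the first condition: you prove directly, without using enough points, that $f^*$ carries $R$-local equivalences in $\topos X$ to $R$-local equivalences in $\topos Y$ by showing that $f^*$ sends the generators $\rho_{p,n,U}$ into the generating set for $\topos Y$ and then invoking saturation. The paper instead checks both conditions stalkwise. Your decomposition is a mild refinement, since it isolates the enough-points hypothesis to the second condition.

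There is, however, one inaccuracy in the first-condition argument. You claim that $L_R \circ f^*$ preserves colimits ``since $L_R$ preserves colimits,'' but the endofunctor $L_R \colon \topos Y \to \topos Y$ does \emph{not} preserve colimits in general: it is the composite of the colimit-preserving reflector onto the subcategory of $R$-local objects with the fully faithful inclusion, and the latter only preserves limits. To apply \cref{lem:localization:general-pi-0-eq} you should use the codomain-restricted reflector in place of $L_R$, exactly as the paper does for $\tau_{\le 0}$ in the proof of \cref{lem:localization:pi-0-eq}. Alternatively, bypass \cref{lem:localization:general-pi-0-eq} and apply \cite[Proposition 5.5.4.16]{highertopoi} directly: since $f^*$ preserves colimits and the $R$-local equivalences of $\topos Y$ form a strongly saturated class, their preimage under $f^*$ is strongly saturated and contains the generators of the saturated class in $\topos X$, because $f^*(\rho_{p,n,U}) \simeq \rho_{p,n,f^* U}$. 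Either repair makes your argument complete.
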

\begin{proof}
    We have a canonical map $f^* \to f^* L_R$.
    It suffices to prove that this map is an $R$-local equivalence,
    and that the right hand side is $R$-local.
    Since $\topos Y$ has enough points, by \cref{lem:localization:R-loc-on-points} 
    we can check both properties on stalks.
    So let $s^* \colon \topos Y \to \An$ be a point of $\topos Y$.

    Note that $s^* f^*$ is a point of $\topos X$.
    Therefore, since $\id{\topos X} \to L_R$ is an $R$-local equivalence,
    also $s^* f^* \to s^* f^* L_R$ is an $R$-local equivalence (again by \ref{lem:localization:R-loc-on-points}).
    Similarly, by the same lemma we also get that $s^* f^* L_R \cong L_R s^* f^*$ is $R$-local.
\end{proof}

Recall the notion of nilpotent sheaf in an $\infty$-topos \cite[Definition A.10]{mattis2024unstable}.
\begin{lem} \label{lem:localization:nilpotence-stability}
    If $\topos X$ has enough points, then the functor $L_R$ preserves nilpotent sheaves.
\end{lem}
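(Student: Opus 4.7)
The plan is to use the enough-points hypothesis to reduce to the classical fact that $R$-localization preserves nilpotent anima. The two main ingredients are \cref{lem:localization:R-loc-on-points}, which gives $s^* L_R X \cong L_R s^* X$ for every point $s^*$ of $\topos X$, and the observation that (under the enough-points hypothesis) a pointed sheaf $X \in \topos X_*$ is nilpotent if and only if every stalk $s^* X \in \An_*$ is a nilpotent anima. This latter equivalence should be essentially formal from the definition \cite[Definition A.10]{mattis2024unstable}: nilpotence is phrased in terms of the homotopy sheaves $\pi_n(X)$ and the nilpotent $\pi_1(X)$-action on them, and taking stalks commutes both with $\pi_n(-)$ and with the action of $\pi_1$, so the required finite filtration on stalks is obtained simply by applying $s^*$ to the one on $X$ (and conversely, a sheaf-level filtration can be checked on stalks because $s^*$ is conservative when ranging over a conservative family of points).

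Granting this, the proof runs as follows. Let $X \in \topos X_*$ be a nilpotent sheaf and let $s^*$ be an arbitrary point of $\topos X$. Then $s^* X$ is a nilpotent pointed anima by the observation above. By the classical Bousfield--Kan theory of $R$-localization on anima (noting that $L_R$ on $\An$ is characterized by the same universal property as the one appearing here), $L_R s^* X$ is again a nilpotent pointed anima. By \cref{lem:localization:R-loc-on-points}, we have $s^* L_R X \cong L_R s^* X$, so every stalk of $L_R X$ is nilpotent. Since $\topos X$ has enough points, this implies that $L_R X$ itself is nilpotent, as desired.

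The main obstacle is justifying the stalk-local characterization of nilpotence cleanly; once this is in hand, the rest is formal. If the definition in \cite[A.10]{mattis2024unstable} is already phrased so as to make this transparent (for instance, by defining nilpotence via a finite principal refinement by $K(A, n)$-fibrations, a property that is preserved and detected by the stalk functors $s^*$ since $s^*$ preserves finite limits and sends Eilenberg--MacLane sheaves to Eilenberg--MacLane anima), then the reduction is immediate. Otherwise, one has to spend a short argument translating between the sheaf-theoretic and stalk-wise formulations, but this is orthogonal to the role of $L_R$ and relies only on standard properties of geometric morphisms.
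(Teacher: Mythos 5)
Your approach is genuinely different from the paper's, which simply cites Asok--Fasel--Hopkins \cite[Lemma 2.3.10]{asok2022localization} applied to the nilpotent morphism $X \to *$; that lemma (in the form the paper uses it) already says that $R$-localization carries nilpotent morphisms to nilpotent morphisms, so nilpotence of $L_R X$ is immediate. You instead try to reduce everything to the classical Bousfield--Kan statement on anima via stalks.

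The gap is in the stalk-local characterization of nilpotence, and you correctly flag it as the main obstacle but do not actually close it. The direction you need is the \emph{converse}: ``all stalks of $Z$ are nilpotent anima, therefore $Z$ is a nilpotent sheaf.'' This does not follow formally from conservativity of the family of points. Nilpotence is the existence of a \emph{finite} filtration (or finite principal refinement), and finiteness is exactly the sort of condition that is not detected pointwise: the nilpotency class of $s^* Z$ can a priori vary without bound over the conservative family of points, in which case no finite sheaf-level filtration exists even though every stalk is nilpotent. Your parenthetical ``a sheaf-level filtration can be checked on stalks because $s^*$ is conservative'' presupposes a sheaf-level filtration already exists, which is precisely what has to be produced. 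To rescue the argument you would need (i) a canonical, stalk-compatible filtration (e.g.\ the lower central series / augmentation filtration on the $\pi_1$-action), (ii) a uniform bound on its length across all stalks of $L_R X$, e.g.\ by showing the classical $L_R$ on anima does not increase nilpotency class and then transporting the bound from the nilpotent sheaf $X$ you started with, and (iii) the observation that termination of the canonical filtration at a fixed finite step is a condition detected on stalks. None of that is in your write-up, and it is nontrivial enough that the paper instead delegates the whole issue to the cited AFH lemma.
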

\begin{proof}
    For a nilpotent sheaf $X \in \topos X$, apply \cite[Lemma 2.3.10]{asok2022localization} to the nilpotent morphism $X \to *$.
\end{proof}

Recall the following definition:
\begin{defn}
    Let $G \in \Grp{\Disc{\topos X}}$ be a sheaf of groups.
    We say that $G$ is $R$-local if for each $p \in P$ 
    the $p$-power map $x \mapsto x^p$ (considered as a map of sheaves of sets) is an isomorphism.    
\end{defn}

Asok-Fasel-Hopkins constructed an $R$-localization functor 
\begin{align*}
    L_R \colon \Grp{\Disc{\topos X}} &\to \Grp{\Disc{\topos X}} \\
    G &\mapsto \pi_1 (L_R BG).
\end{align*}
that exhibits the category of $R$-local nilpotent sheaves of groups as an exact localization 
of the category of nilpotent sheaves of groups \cite[Proposition 2.3.12 (1), (2)]{asok2022localization}.
If $A \in \Grp{\Disc{\topos X}}$ is abelian, then also $L_R A$ is abelian:
Since everything can bec checked on stalks, this follows from 
the corresponding result in $\An$, see \cite[V.2.1]{bousfield2009homotopy}.
In particular, we see that on sheaves of abelian groups, this functor agrees with the $R$-localization 
functor from \cref{section:stable} (as both are localization functors on the same class of local objects).

We will say that a map $f \colon G \to H$ of sheaves of groups
is an \emph{$R$-local equivalence} if and only if $L_R f \colon L_R G \to L_R H$ is an 
equivalence. If $G$ and $H$ are abelian, this agrees with the notion of $R$-local 
equivalence from \cref{section:stable}.

We get the following unstable analogue of \cref{lem:stable:R-local-on-htpy}:
\begin{prop} \label{lem:localization:R-local-on-htpy}
    Suppose that $\topos X$ has enough points.
    A morphism $f \colon X \to Y$ of pointed nilpotent sheaves in $\topos X_*$ is an $R$-local equivalence 
    if and only if $\pi_n(f)$ is an $R$-local equivalence for all $n \ge 1$.

    A pointed nilpotent sheaf $Y \in \topos X_*$ is $R$-local if and only if $\pi_n(Y)$ is $R$-local for all $n \ge 1$.

    In particular, $L_R X \cong Y$ via $f$ if and only if $L_R \pi_n(X) \cong \pi_n(Y)$ via $f$ for all $n \ge 1$.
\end{prop}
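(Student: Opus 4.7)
The plan is to reduce the proposition, stalkwise, to the classical Bousfield--Kan characterization for pointed nilpotent anima. Since $\topos X$ has enough points, by \cref{lem:localization:R-loc-on-points} both being an $R$-local equivalence in $\topos X$ and being $R$-local are properties that can be detected by pulling back along a conservative family of points $s^* \colon \topos X \to \An$. The main compatibilities I need are: first, that $s^*$ preserves $\pi_n$, which follows from $s^*$ being a left-exact left adjoint (so it preserves loop objects and truncations); second, that $s^*$ preserves nilpotence of pointed sheaves (again because the definition from \cite[Definition A.10]{mattis2024unstable} is formulated in terms of the $\pi_n$-sheaves and finite limits); and third, that $L_R$ commutes with $s^*$ on the level of sheaves of (possibly nonabelian) groups, which I address below.

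Granting these compatibilities, the three statements follow uniformly. For the first, $f \colon X \to Y$ is an $R$-local equivalence in $\topos X$ iff $s^* f$ is an $R$-local equivalence in $\An_*$ for every $s$ (by \cref{lem:localization:R-loc-on-points}); by the classical Bousfield--Kan result in $\An_*$ (e.g.\ \cite[Chapter V]{bousfield2009homotopy}) this is equivalent to each $\pi_n(s^* f) \cong s^* \pi_n(f)$ being an $R$-local equivalence of groups in $\An$ for $n \ge 1$; by the stalkwise characterization of $R$-local equivalences of sheaves of groups, this is equivalent to $\pi_n(f)$ being an $R$-local equivalence in $\Grp{\Disc{\topos X}}$ for every $n \ge 1$. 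The second statement is proven by the analogous chain of equivalences starting from the classical characterization of $R$-local pointed nilpotent anima. The third statement then combines the first two: $L_R X \cong Y$ via $f$ means exactly that $f$ is an $R$-local equivalence with $R$-local target, which by the first two statements is equivalent to each $\pi_n(f)$ being an $R$-local equivalence with $R$-local target, i.e.\ to $L_R \pi_n(X) \cong \pi_n(Y)$ via $\pi_n(f)$.

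The main obstacle is the compatibility of $L_R$ with stalks on sheaves of groups in the nonabelian case $n = 1$, since \cref{lem:localization:R-loc-on-points} is stated only for $\topos X$ and not for $\Grp{\Disc{\topos X}}$. The idea is to use the construction $L_R G = \pi_1(L_R BG)$ from \cite{asok2022localization}: since the delooping $BG$ is a sifted colimit preserved by the left adjoint $s^*$, and $\pi_1$ commutes with $s^*$ as noted above, the chain of identifications
\[
    s^* L_R G \;=\; s^* \pi_1 L_R BG \;\cong\; \pi_1 s^* L_R BG \;\cong\; \pi_1 L_R s^* BG \;\cong\; \pi_1 L_R B s^* G \;=\; L_R s^* G
\]
gives the required compatibility, where the middle isomorphism uses \cref{lem:localization:R-loc-on-points}. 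From this, both ``$f$ is an $R$-local equivalence of sheaves of groups'' and ``$G$ is an $R$-local sheaf of groups'' are seen to be stalk-local, which is what the proof above needs. For $n \ge 2$ the homotopy sheaves are abelian, and one can alternatively appeal to \cref{lem:stable:R-loc-commutes-with-seq-colim-pres-functors} applied to $s^*$ to obtain the same compatibility, matching the two notions of $R$-localization mentioned in the text.
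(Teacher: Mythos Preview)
Your proof is correct but takes a genuinely different route from the paper's. The paper argues internally in $\topos X$: it cites \cite[Corollary 2.3.13 (2)]{asok2022localization} as a black box for the identity $L_R \pi_n(Y) \cong \pi_n(L_R Y)$ when $Y$ is nilpotent, and then uses hypercompleteness of $\topos X$ (available since there are enough points) to promote $\pi_n$-isomorphisms to equivalences. You instead externalize everything to stalks and invoke the classical Bousfield--Kan characterization for nilpotent anima; the only extra ingredient you need is the stalk-locality of $R$-locality and $R$-local equivalences for sheaves of (possibly nonabelian) groups, which you obtain cleanly from the formula $L_R G = \pi_1(L_R BG)$ combined with \cref{lem:localization:R-loc-on-points} and $s^* B \cong B s^*$. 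Your approach is more self-contained in that it avoids the Asok--Fasel--Hopkins commutation result (you only use the sheaf-level compatibility of \cref{lem:localization:R-loc-on-points}), and it makes the role of the ``enough points'' hypothesis completely transparent. The paper's approach, on the other hand, foregrounds the structural identity $L_R \pi_n \cong \pi_n L_R$, which is in any case a consequence of the proposition and is reused repeatedly later (e.g.\ in \cref{lem:localization:fiber-cover}, \cref{lem:localization:truncation-R-loc}, and \cref{lem:htpydim:R-loc-highly-connected}).
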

\begin{proof}
    We start with the proof of the second statement. 
    By \cite[Corollary 2.3.13 (2)]{asok2022localization}, we have a canonical equivalence $L_R \pi_n(Y) \cong \pi_n(L_R Y)$.
    If $Y$ is $R$-local, then $L_R \pi_n(Y) \cong \pi_n(L_R Y) \cong \pi_n(Y)$,
    i.e.\ $\pi_n(Y)$ is $R$-local.
    On the other hand, if $\pi_n(Y) \cong L_R \pi_n(Y)$ for all $n \ge 1$,
    then $\pi_n(Y) \cong \pi_n(L_R Y)$ for all $n \ge 1$.
    Note that $\pi_0(Y) = *$ since $Y$ is connected, and $\pi_0(L_R Y) = *$ by \cref{lem:localization:pi-0-eq}.
    We conclude by hypercompleteness that $Y \cong L_R Y$ (note that the topos is hypercomplete 
    since it has enough points, see \cite[Remark 6.5.4.7]{highertopoi}).

    For the first statement, we will again use the equivalences 
    $L_R \pi_n(X) \cong \pi_n(L_R X)$ and $L_R \pi_n(Y) \cong \pi_n(L_R Y)$ for all $n \ge 1$.
    In particular, for every $n \ge 1$ there is a commutative square 
    \begin{center}
        \begin{tikzcd}
            L_R \pi_n(X) \arrow[r, "L_R \pi_n(f)"] \arrow[d, "\cong"] &L_R \pi_n(Y) \arrow[d, "\cong"]\\
            \pi_n (L_R X) \arrow[r, "\pi_n(L_R f)"] & \pi_n(L_R Y).
        \end{tikzcd}
    \end{center}
    If $f$ is an $R$-local equivalence, then the bottom arrow is an equivalence.
    In particular, also the top arrow is an equivalence, i.e.\ $\pi_n(f)$ is an $R$-local equivalence.
    If on the other hand all the $\pi_n(f)$ are $R$-local equivalences,
    we conclude that for every $n \ge 1$ also $\pi_n(L_R f)$ is an equivalence.
    By nilpotence, $\pi_0(L_R X) = * = \pi_0(L_R Y)$.
    Thus, we conclude by hypercompleteness that $L_R f$ is an equivalence,
    i.e.\ $f$ is an $R$-local equivalence.

    The last statement follows by combining the two results.
\end{proof}

Asok-Fasel-Hopkins showed in \cite[Lemma 2.3.10]{asok2022localization} that for a morphism $f \colon Y \to K$ of pointed nilpotent sheaves with connected fiber,
then $L_R \Fib{f} \cong \Fib{L_R f}$.
We need the following strenghtening of their result:
\begin{lem} \label{lem:localization:fiber-cover}
    Suppose that $\topos X$ has enough points.
    Let $f \colon Y \to K$ be a morphism of pointed nilpotent sheaves $Y, K \in \topos X_*$.
    Then $\tau_{\ge 1} \Fib{L_R f} \cong L_R \tau_{\ge 1} \Fib{f}$.

    Moreover, if $\Fib{f}$ is connected, then $\Fib{L_R f} \cong L_R \Fib{f}$.
\end{lem}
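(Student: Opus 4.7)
The second assertion is literally the cited result \cite[Lemma 2.3.10]{asok2022localization}, so my plan concerns only the first. The operations $\operatorname{fib}$ and $\tau_{\ge 1}$ are finite limits (the latter is the fiber of $X \to \pi_0 X$) and hence commute with stalks, and by \cref{lem:localization:R-loc-on-points} so does $L_R$. Since $\topos X$ has enough points, I can use \cref{lem:localization:R-loc-on-points} again to check $R$-locality and $R$-local equivalences stalk-wise, so it suffices to establish the equivalence $L_R \tau_{\ge 1} \Fib{f} \cong \tau_{\ge 1} \Fib{L_R f}$ (via the natural unit-induced map) in the case $\topos X = \An$.

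In $\An$, I would set $N = \operatorname{image}(\pi_1 Y \to \pi_1 K)$ and let $\tilde K \to K$ be the connected cover corresponding to the subgroup $N \subseteq \pi_1 K$. Then $\tilde K$ is nilpotent (the nilpotent action of $\pi_1 K$ on $\pi_n K$ restricts to a nilpotent action of $N$), the map $f$ lifts canonically to $\tilde f \colon Y \to \tilde K$, and $\Fib{\tilde f}$ is connected and identifies with $\tau_{\ge 1} \Fib{f}$. The cited connected-fiber case of the lemma, applied to $\tilde f$, then gives
\[
L_R \tau_{\ge 1} \Fib{f} \;\cong\; \Fib{L_R Y \to L_R \tilde K}.
\]
Running the same covering construction on $L_R f$, let $\widetilde{L_R K} \to L_R K$ denote the cover corresponding to $\operatorname{image}(\pi_1 L_R Y \to \pi_1 L_R K)$, so that $\Fib{L_R Y \to \widetilde{L_R K}} \cong \tau_{\ge 1} \Fib{L_R f}$. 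It remains to identify $L_R \tilde K$ with $\widetilde{L_R K}$ over $L_R K$.

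By \cref{lem:localization:R-local-on-htpy} this identification reduces to a check on $\pi_n$: for $n \ge 2$ both sides equal $L_R \pi_n K$; for $n = 1$ the needed identity $L_R N = \operatorname{image}(L_R \pi_1 Y \to L_R \pi_1 K)$ says precisely that $L_R$ carries the factorization $\pi_1 Y \twoheadrightarrow N \hookrightarrow \pi_1 K$ of nilpotent groups to a factorization of the same type in $L_R \pi_1 K$. This $\pi_1$ step is the main obstacle; I would handle it via the classical Bousfield--Kan fact that $L_R$ preserves exact sequences of nilpotent groups (\cite[Chapter V]{bousfield2009homotopy}), itself proved by central-series induction from the evident exactness of $(-) \otimes R$ on abelian groups.
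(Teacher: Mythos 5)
Your proposal is correct in outline but takes a genuinely different route from the paper. The paper does not pass to stalks or construct auxiliary covers: instead it first establishes that both $L_R \tau_{\ge 1}\Fib{f}$ and $\tau_{\ge 1}\Fib{L_R f}$ are nilpotent (via \cite[Lemma A.12]{mattis2024unstable} and \cref{lem:localization:nilpotence-stability}), reduces by \cref{lem:localization:R-local-on-htpy} to checking the comparison on $\pi_n$ for $n \ge 1$, and then applies $L_R$ to the long exact homotopy sequence of $\Fib{f} \to Y \to K$, comparing it to that of $\Fib{L_R f} \to L_R Y \to L_R K$ by the five lemma; the exactness of $L_R$ on $\Grp{\Disc{\topos X}}$ (from Asok--Fasel--Hopkins) does all the work in one shot. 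Your approach instead localizes to $\An$ and reduces to the connected-fiber case of \cite[Lemma 2.3.10]{asok2022localization} by interposing the covers $\tilde K$ and $\widetilde{L_R K}$; the key step $L_R N = \operatorname{image}(L_R \pi_1 Y \to L_R \pi_1 K)$ is exactly the same exactness principle, just applied once rather than through the whole long exact sequence. Your route is sound, but two points need more care than you give them: (i) to conclude that $L_R$ of the subgroup inclusion $N \hookrightarrow \pi_1 K$ is still injective you should invoke that $N$ is \emph{subnormal} in the nilpotent group $\pi_1 K$, so that the inclusion factors as a finite chain of normal inclusions with nilpotent quotients, each of which $L_R$ preserves by exactness --- the bare statement ``preserves exact sequences'' does not directly apply to a nonnormal subgroup; (ii) you should verify that the two identifications $L_R\tau_{\ge 1}\Fib{f} \cong \Fib{L_R Y \to L_R\tilde K}$ and $\tau_{\ge 1}\Fib{L_R f} \cong \Fib{L_R Y \to \widetilde{L_R K}}$ are compatible with the natural comparison map, which is plausible by naturality of all the constructions but worth stating. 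The paper's argument is shorter, works uniformly in $\topos X$ without the stalk-wise reduction, and avoids the cover construction entirely; yours has the slight advantage of leaning on the black-boxed Lemma 2.3.10 rather than reproving it, at the cost of the subnormality bookkeeping.
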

\begin{proof}
    It follows from \cite[Lemma A.12]{mattis2024unstable} and \cref{lem:localization:nilpotence-stability}
    that $L_R \tau_{\ge 1} \Fib{f}$ and $\tau_{\ge 1} \Fib{L_R f}$ are nilpotent.
    Note that there is a natural comparison morphism $\alpha \colon \tau_{\ge 1} \Fib{f} \to \tau_{\ge 1} \Fib{L_R f}$.
    By \cref{lem:localization:R-local-on-htpy},
    it suffices to show that $\alpha$ induces for all $n \ge 1$ isomorphisms 
    \begin{equation*}
        L_R \pi_n (\tau_{\ge 1} \Fib{f}) \to \pi_n(\tau_{\ge 1} \Fib{L_R f}).
    \end{equation*}
    Note that the fiber sequences $\Fib{f} \to Y \to K$ and $\Fib{L_R f} \to L_R Y \to L_R K$
    give us a diagram of long exact sequences 
    \begin{center}
        \begin{tikzcd}
            \cdots \arrow[r] &\pi_{n+1} (K) \arrow[r] \arrow[d] &\pi_n(\Fib{f}) \arrow[r] \arrow[d] &\pi_n(Y) \arrow[r] \arrow[d] & \cdots \\
            \cdots \arrow[r] &\pi_{n+1} (L_R K) \arrow[r] &\pi_n(\Fib{L_R f}) \arrow[r] &\pi_n(L_R Y) \arrow[r] & \cdots.
        \end{tikzcd}
    \end{center}
    Since $L_R$ is an exact localization functor on $\Grp{\Disc{\topos X}}$,
    and since (again by \cref{lem:localization:R-local-on-htpy}) 
    we have canonical isomorphisms $L_R \pi_n(Y) \cong \pi_n(L_R Y)$ and $L_R \pi_n(K) \cong \pi_n(L_R K)$ for all $n \ge 1$,
    the $5$-lemma implies that we also get isomorphism $L_R \pi_n(\Fib{f}) \cong \pi_n(\Fib{L_R f})$.
    In particular, we conclude that 
    \begin{equation*}
        L_R \pi_n (\tau_{\ge 1} \Fib{f}) \cong L_R \pi_n(\Fib{f}) \cong \pi_n (\Fib{L_R f}) \cong \pi_n (\tau_{\ge 1} \Fib{L_R f}).
    \end{equation*}

    For the last statement, suppose that $\Fib{f}$ is connected.
    Thus, we get $L_R \Fib{f} \cong L_R \tau_{\ge 1} \Fib{f} \cong \tau_{\ge 1} \Fib{L_R f}$.
    Thus, it suffices to show that $\Fib{L_R f}$ is connected.
    This can again by seen from the long exact sequence:
    The morphism $\pi_1 L_R Y \to \pi_1 L_R K$ is surjective as it is isomorphic to the morphism $L_R \pi_1 Y \to L_R \pi_1 K$,
    which is surjective since $L_R$ is an exact functor, and the morphism $\pi_1 Y \to \pi_1 K$
    is surjective since $\Fib{f}$ is connected.
    As $\pi_0(L_R Y) = *$ by \cref{lem:localization:pi-0-eq}, this implies $\pi_0(L_R \Fib{f}) = *$.
\end{proof}

We show next that $R$-localization behaves well with respect to truncations and connected covers.
For this, we first need this simple lemma:
\begin{lem} \label{lem:localization:truncation-cover-geom-morphism}
    Let $f^* \topos X \rightleftarrows \topos Y \colon f_*$ be a geometric morphism.
    Let $X \in \topos X_*$ be a pointed sheaf, and $n \ge 0$.
    We then have equivalences 
    \begin{equation*}
        \tau_{\le n} f^* X \cong f^* \tau_{\le n} X, \tau_{\ge n} f^* X \cong f^* \tau_{\ge n} X, \text{ and } \pi_n(f^* X) \cong f^* \pi_n(X).
    \end{equation*}
\end{lem}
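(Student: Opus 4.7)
The plan is to deduce all three equivalences from two general properties of the geometric morphism $f^*$: it is a left-exact left adjoint, so it preserves all finite limits (in particular terminal objects, hence basepoints, as well as fibers and loop spaces), and it commutes with $n$-truncation.

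The keystone is the $\tau_{\le n}$ equivalence. This is a standard consequence of left exactness; see the discussion of truncations in an $\infty$-topos in \cite[Section 5.5.6]{highertopoi}. For an in-place argument, one proceeds via universal properties: $n$-truncatedness is characterized inductively by iterated diagonals being equivalences, hence is preserved by any limit-preserving functor. In particular $f_*$ carries $n$-truncated objects of $\topos Y$ to $n$-truncated objects of $\topos X$, and a dual induction using left-exactness of $f^*$ shows that $f^* \tau_{\le n} X$ is $n$-truncated in $\topos Y$. For any $n$-truncated $Y \in \topos Y$ the adjunction then gives
\begin{align*}
\Map{\topos Y}{f^* \tau_{\le n} X}{Y} &\simeq \Map{\topos X}{\tau_{\le n} X}{f_* Y} \\
&\simeq \Map{\topos X}{X}{f_* Y} \\
&\simeq \Map{\topos Y}{f^* X}{Y},
\end{align*}
where the middle equivalence uses the universal property of $\tau_{\le n} X$ applied to the $n$-truncated $f_* Y$. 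This identifies $f^* \tau_{\le n} X$ with $\tau_{\le n} f^* X$.

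The $\tau_{\ge n}$ equivalence then follows formally: for a pointed sheaf the $n$-connective cover fits into a fiber sequence $\tau_{\ge n} X \to X \to \tau_{\le n-1} X$ at the canonical basepoint. Applying $f^*$, which preserves basepoints and fibers, and combining with the $\tau_{\le n-1}$ equivalence, yields
\[
f^* \tau_{\ge n} X \simeq \Fib{f^* X \to f^* \tau_{\le n-1} X} \simeq \Fib{f^* X \to \tau_{\le n-1} f^* X} \simeq \tau_{\ge n} f^* X.
\]
Similarly, the $\pi_n$ equivalence follows by writing $\pi_n(X) \simeq \tau_{\le 0} \Omega^n X$ and using that $f^*$ preserves loop spaces (as finite limits of $* \to X \leftarrow *$) and commutes with $\tau_{\le 0}$:
\[
\pi_n(f^* X) \simeq \tau_{\le 0} \Omega^n f^* X \simeq \tau_{\le 0} f^* \Omega^n X \simeq f^* \tau_{\le 0} \Omega^n X \simeq f^* \pi_n(X).
\]
The only step requiring genuine input is the $\tau_{\le n}$ statement, for which the inductive argument about truncations and left-exact functors does the work; the other two equivalences are then purely formal consequences of preservation of finite limits.
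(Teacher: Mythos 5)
Your proof is correct and follows essentially the same route as the paper: the $\tau_{\le n}$ equivalence is the key input (the paper simply cites \cite[Proposition 6.3.1.9]{highertopoi}, whereas you unfold its standard adjunction-plus-truncatedness argument), the $\tau_{\ge n}$ case follows from the fiber sequence $\tau_{\ge n}X \to X \to \tau_{\le n-1}X$ and left-exactness exactly as in the paper, and the $\pi_n$ case is a one-line formal consequence (the paper uses $\pi_n(X) \cong \Omega^n\tau_{\le n}X$ while you use the equivalent $\pi_n(X) \cong \tau_{\le 0}\Omega^n X$, but both are purely formal).
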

\begin{proof}
    The first equivalence is \cite[Proposition 6.3.1.9]{highertopoi}.
    For the second, note that there is by definition a fiber sequence 
    \begin{equation*}
        \tau_{\ge n} X \to X \to \tau_{\le n-1} X.
    \end{equation*}
    Because $f^*$ is left exact, it induces a fiber sequence
    \begin{equation*}
        f^* \tau_{\ge n} X \to f^* X \to f^* \tau_{\le n-1} X.
    \end{equation*}
    Since by the first point we have an equivalence $f^* \tau_{\le n-1} X \cong \tau_{\le n-1} X$,
    it also follows that $f^* \tau_{\ge n} X \cong \tau_{\ge n} f^* X$.
    The last equation follows from the first and left-exactness of $f^*$ because $\pi_n(X) \cong \Omega^n \tau_{\le n} X$.
\end{proof}

\begin{lem} \label{lem:localization:connected-cover-R-loc}
    Suppose that $\topos X$ has enough points.
    Suppose that $X \in \topos X_*$ is a pointed sheaf.
    Then there is a canonical equivalence 
    \begin{equation*}
        L_R \tau_{\ge 1} X \cong \tau_{\ge 1} L_R X.
    \end{equation*}
\end{lem}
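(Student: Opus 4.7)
The plan is to compare $L_R \tau_{\ge 1} X$ and $\tau_{\ge 1} L_R X$ by constructing a canonical comparison map and then verifying it is an equivalence after reducing to stalks. The map arises as follows: the composite $\tau_{\ge 1} X \to X \to L_R X$ has a $1$-connective source, so by the universal property of the coreflection $\tau_{\ge 1}$ it factors uniquely as $\tau_{\ge 1} X \to \tau_{\ge 1} L_R X \to L_R X$. Provided $\tau_{\ge 1} L_R X$ is itself $R$-local, the universal property of $L_R$ then factors the first leg through a canonical map $L_R \tau_{\ge 1} X \to \tau_{\ge 1} L_R X$.

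The two resulting tasks are to show that $\tau_{\ge 1} L_R X$ is $R$-local and that this canonical map is an equivalence. Both can be checked on stalks: since $\topos X$ has enough points, \cref{lem:localization:R-loc-on-points} reduces the question to the analogous statements for $s^* X$ at every point $s$, and by \cref{lem:localization:truncation-cover-geom-morphism} together with \cref{lem:localization:R-loc-on-points} the operations $\tau_{\ge 1}$ and $L_R$ both commute with $s^*$. So the entire problem reduces to its counterpart for a pointed anima $Y$.

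For a pointed anima $Y$, I would decompose $Y = \coprod_\alpha Y_\alpha$ into path components, letting $\alpha_0$ index the component of the basepoint, so that $\tau_{\ge 1} Y = Y_{\alpha_0}$. Because $L_R$ is a left adjoint it preserves coproducts, yielding $L_R Y \cong \coprod_\alpha L_R Y_\alpha$, and by \cref{lem:localization:pi-0-eq} each $L_R Y_\alpha$ remains connected. Hence $\tau_{\ge 1} L_R Y \cong L_R Y_{\alpha_0} \cong L_R \tau_{\ge 1} Y$, which is $R$-local by construction, and the comparison map identifies with the identity on $L_R Y_{\alpha_0}$.

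The main obstacle is less any single hard step than the careful bookkeeping needed to synthesize the commutation of $\tau_{\ge 1}$ and $L_R$ with pullback along points; once that coherence is in hand, the coproduct decomposition in anima renders the final verification immediate.
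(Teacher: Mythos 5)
Your overall strategy matches the paper's: reduce to anima via stalks, decompose into connected components, and identify $\tau_{\ge 1} L_R Y$ with the $R$-localization of the basepoint component. However, there is a genuine gap at the pivotal step.

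You write ``Because $L_R$ is a left adjoint it preserves coproducts, yielding $L_R Y \cong \coprod_\alpha L_R Y_\alpha$.'' This is not correct as stated: the endofunctor $L_R \colon \An \to \An$ is \emph{not} a left adjoint. It is the composite of the left adjoint $\An \to \An^{R\text{-loc}}$ with the fully faithful inclusion $\An^{R\text{-loc}} \hookrightarrow \An$, and the latter is a right adjoint, not a left adjoint. Consequently $L_R$ has no a priori reason to commute with coproducts, and the identification $L_R Y \cong \coprod_\alpha L_R Y_\alpha$ is precisely the nontrivial content of the lemma.

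The paper supplies this missing argument in two steps: first, $Y = \coprod_\alpha Y_\alpha \to \coprod_\alpha L_R Y_\alpha$ is an $R$-local equivalence because strongly saturated classes are closed under colimits; second, $\coprod_\alpha L_R Y_\alpha$ is itself $R$-local. The second step is where the real work sits: one tests against the generators $\rho_{p,n,U}$, reduces to connected $U$ by pulling the coproduct out of the mapping space, and then observes that any map $S^n \times U \to \coprod_\alpha L_R Y_\alpha$ with $S^n \times U$ connected factors through a single summand $L_R Y_\alpha$, which is $R$-local. Your proof needs this argument (or an equivalent one) to close the gap; the rest of your write-up---the construction of the comparison map via the universal properties and the reduction to stalks via \cref{lem:localization:R-loc-on-points} and \cref{lem:localization:truncation-cover-geom-morphism}---is fine and lines up with the paper's reasoning.
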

\begin{proof}
    Since connective covers and $R$-localization commute with taking stalks (see \cref{lem:localization:truncation-cover-geom-morphism} and \cref{lem:localization:R-loc-on-points}),
    and since there are enough points,
    we can assume that $X$ is a pointed anima, so $X = \amalg_{j \in J} X_j$ decomposes as a coproduct 
    of connected spaces (this would not work in an arbitrary $\infty$-topos).
    Let $0 \in J$ be the index such that $X_0$ is the component containing the basepoint.
    Then $\tau_{\ge 1} X = X_0$.
    Since the colimit of $R$-local equivalences is again an $R$-local equivalence,
    \begin{equation*}
        X = \amalg_{j \in J} X_j \to \amalg_{j \in J} L_R X_j
    \end{equation*}
    is an $R$-local equivalence.

    We now show that $\amalg_{j \in J} L_R X_j$ is $R$-local.
    Let $U \in \An$ be an anima, $n \ge 1$ and $p \in P$.
    We have to show that $\rho_{p, n, U}$ induces an equivalence 
    $\Map{\An}{S^n \times U}{\amalg_{j \in J} L_R X_j} \to \Map{\An}{S^n \times U}{\amalg_{j \in J} L_R X_j}$.
    By writing $U$ as the disjoint union of its components, and pulling out the coproduct from the left side of the 
    mapping space, we may assume that $U$ is connected.
    But then, any map $S^n \times U \to \amalg_{j \in J} L_R X_j$ factors (uniquely) over some $L_R X_j$.
    The claim then follows because $L_R X_j$ is $R$-local.

    This now implies that $L_R X \cong \amalg_{j \in J} L_R X_j$.
    Note that by \cref{lem:localization:pi-0-eq}, $L_R X_0$ is connected.
    Therefore, we see that $\tau_{\ge 1} L_R X \cong L_R X_0 \cong L_R \tau_{\ge 1} X$.
    This proves the lemma.
\end{proof}

For the result about truncations we need the following stability property of nilpotent sheaves:
\begin{lem} \label{lem:localization:nilpotence-truncation}
    If $X \in \topos X_*$ is nilpotent, so is $\tau_{\le n} X$ for $n \ge 0$.
\end{lem}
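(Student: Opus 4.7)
My approach reduces nilpotence of $\tau_{\le n} X$ to that of $X$ via the Postnikov tower. Recall from \cite[Definition A.10]{mattis2024unstable} that nilpotence of a pointed connected sheaf $X$ is controlled by the existence of a principal refinement of its Postnikov tower (equivalently, by nilpotent actions of $\pi_1 X$ on each $\pi_k X$ for $k \ge 1$).

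First I would record that $\tau_{\le n} X$ is again pointed and connected, since $\tau_{\le n}$ preserves the basepoint and is the identity on $\pi_0$. Next I would invoke the standard compatibility $\tau_{\le k}(\tau_{\le n} X) \cong \tau_{\le \min(k, n)} X$, which shows that the Postnikov tower of $\tau_{\le n} X$ coincides with that of $X$ through stage $n$ and is constant thereafter. Correspondingly, $\pi_k(\tau_{\le n} X) \cong \pi_k(X)$ for $1 \le k \le n$ and vanishes for $k > n$, and the canonical map $X \to \tau_{\le n} X$ identifies $\pi_1$ and the induced $\pi_1$-actions on homotopy sheaves in the range $1 \le k \le n$.

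Given any principal refinement of the Postnikov tower of $X$ witnessing its nilpotence, I would truncate this refinement at the index corresponding to the $n$-th Postnikov stage and extend by identities past that stage. Each stage of the resulting tower either appears as a principal fibration in the original refinement or is the identity, so this is itself a principal refinement; it witnesses nilpotence of $\tau_{\le n} X$. Equivalently, the $\pi_1(\tau_{\le n} X) = \pi_1 X$-action on $\pi_k(\tau_{\le n} X)$ either matches the (nilpotent) action on $\pi_k X$ or is trivial on the zero group, and hence is nilpotent in all cases.

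I foresee no serious obstacle: the lemma is essentially a verification that the defining data of nilpotence restricts cleanly along the truncation map, making it a matter of bookkeeping on the Postnikov tower.
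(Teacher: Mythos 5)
Your proof is correct and takes essentially the same route as the paper: the paper also observes that $\pi_1(\tau_{\le n}X)\cong\pi_1(X)$ and that the induced action on $\pi_k(\tau_{\le n}X)$ agrees with the action on $\pi_k(X)$ for $k\le n$ and is the trivial action on the trivial group for $k>n$, so the nilpotence of each action is inherited. The only difference is that the paper handles $n=0$ separately (giving $\tau_{\le 0}X=*$) and argues directly from the action characterization, whereas you additionally spell out the truncated principal refinement; both are fine and say the same thing.
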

\begin{proof}
    If $n = 0$, then $\tau_{\le n} X = *$ is nilpotent.
    Otherwise, we see that $\pi_1(X) \cong \pi_1(\tau_{\le n} X)$.
    In particular, the action of $\pi_1(\tau_{\le n} X)$ on $\pi_k(\tau_{\le n X})$
    is the same as the action of $\pi_1(X)$ on $\pi_k(X)$ if $k \le n$,
    or the trivial action on the trivial group if $k > n$.
    Thus, $\tau_{\le n} X$ is still nilpotent.
\end{proof}

\begin{lem} \label{lem:localization:truncation-R-loc}
    Suppose that $\topos X$ has enough points.
    Let $X \in \topos X_*$ be a pointed nilpotent sheaf.
    Then $L_R \tau_{\le n} X \cong \tau_{\le n} L_R X$ for all $n \ge 0$.
\end{lem}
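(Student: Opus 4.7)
The approach is to mimic \cref{lem:localization:connected-cover-R-loc} but in the truncation direction. Because $\topos X$ has enough points it is hypercomplete by \cite[Remark 6.5.4.7]{highertopoi}, so it suffices to construct a natural comparison map $\phi \colon \tau_{\le n} L_R X \to L_R \tau_{\le n} X$ and verify that it induces an isomorphism on every homotopy sheaf $\pi_k$.

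To even construct $\phi$, I would first show that $L_R \tau_{\le n} X$ is $n$-truncated. By \cref{lem:localization:nilpotence-truncation} the truncation $\tau_{\le n} X$ is nilpotent, hence by \cref{lem:localization:nilpotence-stability} so is $L_R \tau_{\le n} X$. Combined with the identification $\pi_k(L_R Y) \cong L_R \pi_k(Y)$ for nilpotent $Y$ (from \cite[Corollary 2.3.13(2)]{asok2022localization}, already invoked in the proof of \cref{lem:localization:R-local-on-htpy}), this yields
\[\pi_k(L_R \tau_{\le n} X) \cong L_R \pi_k(\tau_{\le n} X) = 0 \quad \text{for } k > n.\]
Applying $L_R$ to the truncation unit $X \to \tau_{\le n} X$ then gives a map $L_R X \to L_R \tau_{\le n} X$ whose target is now $n$-truncated; by the universal property of $\tau_{\le n}$ it factors uniquely through $\tau_{\le n} L_R X$, producing $\phi$.

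Finally, I would compare homotopy sheaves in each degree. Both source and target are connected by \cref{lem:localization:pi-0-eq} and the connectedness of nilpotent sheaves, and both vanish in degrees $> n$. For $1 \le k \le n$, the identities $\pi_k(\tau_{\le n} L_R X) \cong \pi_k(L_R X) \cong L_R \pi_k(X)$ and $\pi_k(L_R \tau_{\le n} X) \cong L_R \pi_k(\tau_{\le n} X) \cong L_R \pi_k(X)$ hold (again using the nilpotent formula $\pi_k L_R \cong L_R \pi_k$), and chasing the construction of $\phi$ together with the fact that $\pi_k$ of $X \to \tau_{\le n} X$ is the identity in this range shows $\pi_k(\phi)$ is the identity on $L_R \pi_k(X)$. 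Hypercompleteness then upgrades $\phi$ to an equivalence.

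I do not anticipate a real obstacle: every ingredient (nilpotence stability under truncation and under $L_R$, the formula $\pi_k L_R \cong L_R \pi_k$ on nilpotent sheaves, hypercompleteness via enough points) has been set up already. The only point requiring care is verifying up front that $L_R \tau_{\le n} X$ is $n$-truncated, without which the factorization defining $\phi$ is unavailable.
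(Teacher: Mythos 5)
Your proof is correct and takes essentially the same approach as the paper: both establish nilpotence of $\tau_{\le n}X$, $L_R X$, and $\tau_{\le n} L_R X$ via \cref{lem:localization:nilpotence-stability,lem:localization:nilpotence-truncation}, then reduce everything to the identification $\pi_k L_R \cong L_R \pi_k$ on nilpotent sheaves and a degree-by-degree comparison of homotopy sheaves ($k>n$ gives $0$ on both sides, $k\le n$ gives $L_R \pi_k(X)$ on both sides) using hypercompleteness. The paper packages the final step through the ``in particular'' clause of \cref{lem:localization:R-local-on-htpy}, applied to the natural map $\tau_{\le n}X \to \tau_{\le n}L_R X$, which lets it skip explicitly verifying that $L_R\tau_{\le n}X$ is $n$-truncated; you instead verify that truncation directly and build the comparison map $\phi$ by the universal property of $\tau_{\le n}$, which is a slightly more hands-on but equivalent route.
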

\begin{proof}
    By \cref{lem:localization:nilpotence-stability} the sheaf $L_R X$ is nilpotent, 
    and by \cref{lem:localization:nilpotence-truncation} also $\tau_{\le n} X$ and $\tau_{\le n} L_R X$ are nilpotent.
    Whence by \cref{lem:localization:R-local-on-htpy} it suffices to show that 
    $L_R \pi_k(\tau_{\le n} X) \cong \pi_k(\tau_{\le n} L_R X)$ for all $k \ge 1$.
    If $k > n$ then both sides are $0$.
    On the other hand, if $k \le n$, then the result follows by using \cref{lem:localization:R-local-on-htpy} again,
    since we then get a chain of equivalences
    \begin{equation*}
        L_R \pi_k(\tau_{\le n} X) \cong L_R \pi_k(X) \cong \pi_k(L_R X) \cong \pi_k(\tau_{\le n} L_R X).
    \end{equation*}
\end{proof}

Let $\spectra X$ be the stabilization of $\topos X$,
with induced adjunction $\pSus \colon \topos X \rightleftarrows \spectra{X} \colon \Loop$ \cite[Section 1.4]{higheralgebra}.
The stabilization has a separated accessible t-structure
given by $\tcocon[-1]{\spectra{X}} = \set{E \in \spectra{X}}{\Loop E \cong *}$.
(This t-structure is always right-separated, see \cite[Lemma A.5]{mattis2024unstable}.
A similar proof shows that the t-structure is left-separated if $\topos X$ is hypercomplete,
e.g.\ if $\topos X$ has enough points \cite[Remark 6.5.4.7]{highertopoi}.)
Moreover, this t-structure is compatible with filtered colimits \cite[Proposition 1.3.2.7 (2)]{sag}.
We end this section by proving a comparison result between unstable and stable $R$-localization.
\begin{lem} \label{lem:localization:infinite-loop-space}
    Suppose that $\topos X$ has enough points.
    For a $1$-connective object $E \in \spectra{X}$,
    there is a canonical equivalence 
    \begin{equation*}
        L_R \Loop E \cong \Loop L_R E.
    \end{equation*}
\end{lem}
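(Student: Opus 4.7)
The plan is to reduce to a homotopy-group comparison by exhibiting the natural map $\Loop E \to \Loop L_R E$ as an $R$-localization, then appealing to \cref{lem:localization:R-local-on-htpy}.

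First I would check that both $\Loop E$ and $\Loop L_R E$ are nilpotent pointed sheaves. Since $E$ is $1$-connective, the standard identification $\pi_n(\Loop F) \cong \pi_n(F)$ for any connective $F \in \spectra{X}$ and $n \ge 0$ shows that $\pi_0(\Loop E) = \pi_0(E) = 0$, so $\Loop E$ is connected. By \cref{lem:stable:R-local-on-htpy} applied to the separated accessible t-structure on $\spectra{X}$ (which is compatible with filtered colimits, as recalled just above the lemma), we also have $\pi_0(L_R E) \cong L_R \pi_0(E) = 0$, so $L_R E$ is $1$-connective and $\Loop L_R E$ is likewise connected. Being infinite loop spaces, both are simple, hence nilpotent.

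Next I would consider the natural map $f \colon \Loop E \to \Loop L_R E$ obtained by applying $\Loop$ to the stable localization unit $\eta \colon E \to L_R E$. For each $n \ge 1$, under the identifications $\pi_n(\Loop F) \cong \pi_n(F)$ and $\pi_n(L_R E) \cong L_R \pi_n(E)$ (the latter from \cref{lem:stable:R-local-on-htpy}), the map $\pi_n(f)$ is identified with the localization unit $\pi_n(E) \to L_R \pi_n(E)$ in $\heart{\spectra{X}}$. This matches the corresponding $R$-localization in the category of abelian group objects of $\topos X$, since, as noted in \cref{section:unstable}, the stable and unstable $R$-localizations agree on abelian sheaves.

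Finally I would apply \cref{lem:localization:R-local-on-htpy}: since $\pi_n(\Loop L_R E) \cong L_R \pi_n(E)$ is $R$-local for every $n \ge 1$, the sheaf $\Loop L_R E$ is $R$-local; and since each $\pi_n(f)$ is an $R$-local equivalence, $f$ itself is an $R$-local equivalence. Together, these exhibit $\Loop L_R E$ as the $R$-localization of $\Loop E$, yielding the desired canonical equivalence $L_R \Loop E \cong \Loop L_R E$. The one ingredient that merits explicit attention is the identification $\pi_n(\Loop F) \cong \pi_n(F)$ for connective $F$ in a general $\infty$-topos; this is a standard consequence of the $t$-exactness of $\Loop$ on connective objects together with the equivalence between $\heart{\spectra{X}}$ and abelian group objects in $\topos X$, but it is worth invoking carefully, as it is the only nontrivial bridge between the stable and unstable $R$-localization formalisms.
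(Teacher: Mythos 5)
Your proof is correct and follows essentially the same route as the paper: establish nilpotence of $\Loop E$ and $\Loop L_R E$ (the paper invokes \cite[Lemma A.11]{mattis2024unstable} for the fact that connected infinite loop sheaves are nilpotent), then reduce to homotopy sheaves via \cref{lem:localization:R-local-on-htpy} and conclude using $\pi_n(\Loop F) \cong \pi_n(F)$ and \cref{lem:stable:R-local-on-htpy}. Your explicit remark that the stable and unstable $R$-localizations agree on abelian sheaf objects is a welcome clarification that the paper leaves implicit.
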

\begin{proof}
    There is a canonical map $\Loop E \to \Loop L_R E$.
    By \cref{lem:stable:R-local-on-htpy} the sheaf $L_R E$ is still $1$-connective,
    whence $\Loop E$ is nilpotent by \cite[Lemma A.11]{mattis2024unstable}.
    By \cref{lem:localization:R-local-on-htpy}, it therefore suffices to show that 
    $L_R \pi_n(\Loop E) \cong \pi_n(\Loop L_R E)$
    via the canonical map for all $n \ge 1$.
    As $\pi_n(\Loop E) \cong \pi_n(E)$,
    and similarly $\pi_n(\Loop L_R E) \cong \pi_n(L_R E)$, the lemma follows immediately 
    from \cref{lem:stable:R-local-on-htpy}.
\end{proof}

\section{Nilpotence-Stability of \texorpdfstring{$p$}{p}-Completions}
Let $\topos X$ be an $\infty$-topos.
Recall that for every $p \in P$ there exists a functor $L_p \colon \topos X \to \topos X$ which is the universal functor 
that inverts $p$-equivalences, i.e.\ morphisms $f \colon X \to Y$ in $\topos X$ 
such that $\pSus(f) \sslash p$ is an equivalence, see \cite[Section 3]{mattis2024unstable}.
In this section, we will prove that if $X \in \topos X_*$ is a nilpotent sheaf which is $n$-truncated for some $n$,
then $\tau_{\ge 1} \prod_p L_p X$ is nilpotent as well.
\begin{notation}
    We write $\prod_p$ for the product over all primes $p \in P$.
\end{notation}
We will start with the following well-known fact:  
\begin{lem} \label{lem:nilpotence-stability:connective-cover-limit}
    Let $\Cat I$ be a small $\infty$-category,
    and $X_\bullet \colon \Cat I \to \topos X_*$ be a diagram.
    For $n \ge 0$ there is an equivalence $\tau_{\ge n} \limil{i} \tau_{\ge n} X_i \cong \tau_{\ge n} \limil{i} X_i$.
\end{lem}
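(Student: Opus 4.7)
The plan is to exploit the fact that the connective cover functor $\tau_{\ge n} \colon \topos X_* \to \topos X_*$ factors through the full subcategory $(\topos X_*)_{\ge n}$ of $n$-connective pointed sheaves and, viewed as a functor landing there, is right adjoint to the inclusion $\iota$. The lemma then becomes purely formal: a right adjoint, evaluated on a diagram, corepresents the same functor as the adjoint applied to the diagram of its values.

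To justify the adjunction $\iota \dashv \tau_{\ge n}$, let $Z$ be $n$-connective and $Y$ arbitrary, and apply $\Map{\topos X_*}{Z}{-}$ to the defining fiber sequence $\tau_{\ge n} Y \to Y \to \tau_{\le n-1} Y$. The third mapping space $\Map{\topos X_*}{Z}{\tau_{\le n-1} Y}$ is contractible: by the truncation adjunction $\tau_{\le n-1} \dashv \iota$ it agrees with $\Map{\topos X_*}{\tau_{\le n-1} Z}{\tau_{\le n-1} Y}$, and $\tau_{\le n-1} Z$ is terminal in $\topos X_*$ by $n$-connectivity of $Z$. Hence the induced map $\Map{\topos X_*}{Z}{\tau_{\ge n} Y} \to \Map{\topos X_*}{Z}{Y}$ is an equivalence, exhibiting the desired adjunction.

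With this in hand, the lemma reduces to a Yoneda argument. For any $n$-connective pointed sheaf $Z$,
\[
\Map{\topos X_*}{Z}{\tau_{\ge n}\limil{i} X_i} \cong \Map{\topos X_*}{Z}{\limil{i} X_i} \cong \limil{i} \Map{\topos X_*}{Z}{X_i},
\]
using the adjunction at the first step and that mapping spaces preserve limits in the target at the second. Running the same chain starting from $\tau_{\ge n}\limil{i} \tau_{\ge n} X_i$ and applying the adjunction once more at the end to absorb the inner $\tau_{\ge n}$ yields the same value $\limil{i} \Map{\topos X_*}{Z}{X_i}$. Since both objects are $n$-connective and corepresent the same functor on $(\topos X_*)_{\ge n}$, Yoneda delivers the equivalence.

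I do not anticipate a substantive obstacle: the only ingredient beyond definitions is the identification of $\tau_{\ge n}$ as right adjoint to the inclusion, which is standard in any $\infty$-topos. In particular, no hypercompleteness, no homotopy-group argument, and no long exact sequence is needed.
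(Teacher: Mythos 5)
Your proof is correct and takes essentially the same approach as the paper's: both hinge on the adjunction between the inclusion of $n$-connective objects and $\tau_{\ge n}$, together with the fact that the right adjoint preserves limits and the unit $\id{} \to \tau_{\ge n}\iota$ is an equivalence. The paper writes this as a direct chain of equivalences through the colocalization, whereas you package the same formal content as a Yoneda argument on corepresented functors restricted to the $n$-connective subcategory (and additionally spell out the adjunction from the defining fiber sequence, which the paper takes as known).
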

\begin{proof}
    Write temporarily $L \colon \topos X_{*, \ge n} \rightleftarrows \topos X_* \colon R$
    for the colocalization given by the $n$-connective cover,
    so that $LR \cong \tau_{\ge n}$.
    Note that $R$ commutes with limits (as a right adjoint),
    and that the unit $\id{} \to RL$ is an isomorphism (as $L$ is fully faithful).
    Thus we get equivalences
    \begin{align*}
        \tau_{\ge n} \limil{i} \tau_{\ge n} X_i 
        &\cong LR \limil{i} LR X_i \\
        &\cong L \limil{i} RLR X_i \\
        &\cong L \limil{i} R X_i \\
        &\cong LR \limil{i} X_i \\
        &\cong \tau_{\ge n} \limil{i} X_i.
    \end{align*}
\end{proof}

\begin{lem} \label{lem:nilpotence-stability:p-comp-prod-fiber}
    Let $f \colon Y \to K$ be a map of nilpotent sheaves in $\topos X_*$,
    where $Y$ is $n$-truncated for some $n \ge 0$, and $K$ is an Eilenberg-MacLane sheaf 
    in degree $n + 1$ (e.g.\ a layer of a principal refinement of the Postnikov tower of a nilpotent sheaf).
    Then there are equivalences 
    \begin{align*}
        \tau_{\ge 1} \prod_p L_p \Fib{Y \to K} 
        &\cong \tau_{\ge 1} \Fib{\prod_p L_p Y \to \prod_p L_p K} \\
        &\cong \tau_{\ge 1} \Fib{\tau_{\ge 1} \prod_p L_p Y \to \tau_{\ge 1} \prod_p L_p K}.
    \end{align*}
\end{lem}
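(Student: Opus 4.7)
The plan is to prove the two equivalences separately. The first will be reduced to a per-prime statement handled by a $p$-completion analogue of \cref{lem:localization:fiber-cover}, while the second will follow from a direct comparison of long exact sequences of homotopy sheaves combined with hypercompleteness.

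For the first equivalence, I would begin by using \cref{lem:nilpotence-stability:connective-cover-limit} (with $n = 1$, noting that the products are limits) together with the fact that fibers commute with products, to rewrite both sides as
\[\tau_{\ge 1} \prod_p L_p \Fib{f} \cong \tau_{\ge 1} \prod_p \tau_{\ge 1} L_p \Fib{f}\]
and
\[\tau_{\ge 1} \Fib{\prod_p L_p Y \to \prod_p L_p K} \cong \tau_{\ge 1} \prod_p \Fib{L_p f} \cong \tau_{\ge 1} \prod_p \tau_{\ge 1} \Fib{L_p f}.\]
This reduces the first equivalence to the per-prime assertion $\tau_{\ge 1} L_p \Fib{f} \cong \tau_{\ge 1} \Fib{L_p f}$ for each $p \in P$. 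I would establish this by running the same five-lemma argument as in the proof of \cref{lem:localization:fiber-cover}, with $L_R$ replaced by $L_p$ and using the corresponding homotopy-group characterization of $L_p$ from \cite[Section 3]{mattis2024unstable}. The hypotheses that $Y$ is $n$-truncated and $K$ is Eilenberg-MacLane in degree $n+1$ force $\Fib{f}$ to be nilpotent and bounded above in degree $n$, so that only finitely many homotopy sheaves enter the long exact sequence and each is well-behaved under $L_p$.

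For the second equivalence, setting $g \coloneqq \prod_p L_p f \colon A \to B$, the universal property of fibers yields a canonical comparison map $\Fib{\tau_{\ge 1} g} \to \Fib{g}$ arising from the natural maps $\tau_{\ge 1} A \to A$ and $\tau_{\ge 1} B \to B$. Comparing the two long exact sequences of homotopy sheaves, a routine five-lemma shows that the induced map on $\pi_k$ is an isomorphism for every $k \ge 1$, because $\pi_k$ of an object coincides with $\pi_k$ of its $1$-connective cover for $k \ge 1$. Since $\topos X$ has enough points and is therefore hypercomplete, Whitehead's theorem promotes this to an equivalence $\tau_{\ge 1} \Fib{\tau_{\ge 1} g} \simeq \tau_{\ge 1} \Fib{g}$ (both sides are automatically connected, so the $\pi_0$-check is trivial).

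The main obstacle will be the per-prime statement $\tau_{\ge 1} L_p \Fib{f} \cong \tau_{\ge 1} \Fib{L_p f}$ appearing in the first equivalence. While the outline mirrors \cref{lem:localization:fiber-cover}, the $p$-completion $L_p$ is less well-behaved than the rationalization $L_R$ on generic sheaves of abelian groups, and it is precisely here that the truncation hypothesis on $Y$ and the Eilenberg-MacLane structure of $K$ become essential: they confine the homotopy sheaves appearing in the long exact sequence to a range in which the $p$-completion of nilpotent sheaves is sufficiently controlled for the five-lemma comparison to succeed.
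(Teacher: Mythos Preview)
Your decomposition of the first equivalence matches the paper's, but you are working harder than necessary and leaving a genuine loose end. The per-prime statement you isolate, $L_p \Fib{f} \cong \tau_{\ge 1} \Fib{L_p f}$ (equivalently your $\tau_{\ge 1}$-version, since $\Fib{f}$ and hence $L_p\Fib{f}$ are connected here), is precisely \cite[Proposition 3.19]{mattis2024unstable}, which the paper simply cites. Your proposed five-lemma reproof along the lines of \cref{lem:localization:fiber-cover} is risky: unlike $L_R$, the functor $L_p$ does \emph{not} satisfy $\pi_k(L_p X) \cong L_p \pi_k(X)$ on nilpotent sheaves in general (derived $p$-completion of an abelian group object can spread across two degrees), so the naive five-lemma comparison fails without a more refined argument. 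You flag this as ``the main obstacle'' but never resolve it; the paper avoids the issue entirely by quoting the needed result.

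For the second equivalence you are overcomplicating matters and, more seriously, importing an unnecessary hypothesis. This section of the paper assumes only that $\topos X$ is an $\infty$-topos, with no enough-points or hypercompleteness assumption, so your appeal to Whitehead's theorem is not available. The paper's argument is a one-liner: a fiber is a limit over the cospan $\ast \to B \leftarrow A$, so \cref{lem:nilpotence-stability:connective-cover-limit} applied with $n=1$ gives
\[
\tau_{\ge 1}\Fib{A \to B} \;\cong\; \tau_{\ge 1}\Fib{\tau_{\ge 1}A \to \tau_{\ge 1}B}
\]
directly, with no hypercompleteness needed. You already invoke this lemma elsewhere in your argument; you just missed that it handles this step too.
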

\begin{proof}
    We calculate 
    \begin{align*}
        \tau_{\ge 1} \prod_p L_p \Fib{Y \to K}
        &\cong \tau_{\ge 1} \prod_p \tau_{\ge 1} \Fib{L_p Y \to L_p K} \\
        &\cong \tau_{\ge 1} \prod_p \Fib{L_p Y \to L_p K} \\
        &\cong \tau_{\ge 1} \Fib{\prod_p L_p Y \to \prod_p L_p K} \\
        &\cong \tau_{\ge 1} \Fib{\tau_{\ge 1} \prod_p L_p Y \to \tau_{\ge 1} \prod_p L_p K},
    \end{align*}
    where we used \cite[Proposition 3.19]{mattis2024unstable} in the first equivalence,
    \cref{lem:nilpotence-stability:connective-cover-limit} in the second and last equivalence,
    and the fact that limits commute with limits in the third equivalence.
\end{proof}

This now allows us to prove the stability property:
\begin{lem} \label{lem:nilpotence-stability:nilpotent-product}
    Let $X \in \topos X_*$ be $n$-truncated for some $n \ge 0$ and nilpotent.
    Then $\tau_{\ge 1} \prod_p L_p X$ is nilpotent as well.
\end{lem}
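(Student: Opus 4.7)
The plan is to induct on the length $N$ of a principal refinement of the Postnikov tower of $X$,
\[\ast = X_0 \to X_1 \to \cdots \to X_N = X,\]
in which each stage fits in a fiber sequence $X_k \to X_{k-1} \to K(A_k, m_k+1)$ with $A_k$ an abelian group sheaf and $m_k \ge 1$. Such a refinement exists and is finite because $X$ is nilpotent and $n$-truncated, and by first refining $\tau_{\le 1} X$, then $\tau_{\le 2} X$, and so on, we may arrange it so that $m_1 \le m_2 \le \cdots \le m_N \le n$. This ordering ensures that $X_{k-1}$ is $m_k$-truncated at each stage, so the hypotheses of \cref{lem:nilpotence-stability:p-comp-prod-fiber} are satisfied. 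The base case $N = 0$ is immediate since then $X = \ast$.

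For the inductive step I would set $W := \tau_{\ge 1} \prod_p L_p X_{N-1}$ and $V := \tau_{\ge 1} \prod_p L_p K(A_N, m_N + 1)$. The inductive hypothesis applied to $X_{N-1}$ (which has a principal refinement of length $N-1$) shows that $W$ is nilpotent. Applying \cref{lem:nilpotence-stability:p-comp-prod-fiber} to the top fiber sequence yields
\[\tau_{\ge 1} \prod_p L_p X \cong \tau_{\ge 1} \Fib{W \to V},\]
so it suffices to verify that this sheaf is nilpotent. For this I would invoke \cite[Lemma A.12]{mattis2024unstable} (the same result already used in the proof of \cref{lem:localization:fiber-cover}), which asserts that the connected cover of the fiber of a morphism between pointed nilpotent sheaves is nilpotent. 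This reduces the remaining problem to showing that $V$ is nilpotent.

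To show $V$ is nilpotent I would observe that $K(A_N, m_N+1)$ is simply connected since $m_N + 1 \ge 2$. Granted that $L_p$ preserves simple connectivity (which should follow from the construction of the unstable $p$-completion in \cite[Section 3]{mattis2024unstable}), each $L_p K(A_N, m_N + 1)$ is simply connected, and a product of pointed simply connected sheaves remains simply connected. Hence $\prod_p L_p K(A_N, m_N+1)$ is simply connected, so $V$ equals this product (the connected cover of an already-connected sheaf being itself) and is simply connected. Every simply connected pointed sheaf is nilpotent (its Postnikov tower is itself a principal refinement, since the vanishing $\pi_1$ acts trivially on higher homotopy), so $V$ is nilpotent as required.

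The main obstacle I anticipate is verifying in detail that $L_p$ preserves simple connectivity in this generality; this should be routine from the construction in \cite[Section 3]{mattis2024unstable}, but needs to be checked. Apart from that, the argument closes cleanly by combining the main technical input \cref{lem:nilpotence-stability:p-comp-prod-fiber} with \cite[Lemma A.12]{mattis2024unstable} and the ordered form of the principal refinement.
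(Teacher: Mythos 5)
Your proof is correct and follows the same inductive scheme as the paper's: decompose $X$ via a principal refinement of its Postnikov tower, apply \cref{lem:nilpotence-stability:p-comp-prod-fiber} to identify $\tau_{\ge 1}\prod_p L_p X$ with $\tau_{\ge 1}$ of a fiber, and close with \cite[Lemma A.12]{mattis2024unstable} once both terms of the cospan are known to be nilpotent. The one place you diverge is the argument that $V = \tau_{\ge 1}\prod_p L_p K(A_N, m_N+1)$ is nilpotent. The paper uses \cite[Corollary 3.18]{mattis2024unstable} to recognize $V$ as a connected infinite loop sheaf and then cites \cite[Lemma A.11]{mattis2024unstable}; you instead argue that $V$ is simply connected. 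Your flagged concern about $L_p$ preserving simple connectivity does resolve cleanly in the case you need: for an Eilenberg--MacLane sheaf in degree $m_N+1 \ge 2$, the same Corollary 3.18 expresses $L_p K(A, m_N+1)$ as $\tau_{\ge 1}\pLoop$ applied to an $(m_N+1)$-connective spectrum (the stable $p$-completion $L_p HA$ is connective because $\limone{} $ of the tower of surjections $A/p^{k+1} \twoheadrightarrow A/p^k$ vanishes), hence simply connected; a product of simply connected sheaves is simply connected, and a simply connected sheaf is trivially nilpotent. So both routes work; the paper's has the small advantage of resting only on results stated in the cited reference, whereas yours requires the reader to unpack the same Corollary 3.18 one step further to verify the simple connectivity claim.
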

\begin{proof}
    Since $X$ is truncated and nilpotent, there is a sequence of truncated and nilpotent sheaves 
    $X = X_n \xrightarrow{p_n} X_{n-1} \to \dots \to X_0 = *$
    such that $p_k$ fits into a fiber sequence $X_k \to X_{k-1} \to K(A_k, n_k)$
    for some $A_k \in \AbObj{\Disc{\topos X}}$ and $n_k \ge 2$, such that $X_{k-1}$ is $(n_k - 1)$-connective,
    by \cite[Lemma A.15]{mattis2024unstable}.
    We will prove the result by induction on the minimal length of such a sequence,
    the case $n = 0$ is trivial.
    We will write $Y \coloneqq X_{n-1}$ and $K \coloneqq K(A_n, n_n)$,
    i.e.\ there is a fiber sequence $X \to Y \to K$.
    \cref{lem:nilpotence-stability:p-comp-prod-fiber} supplies us with an equivalence
    \begin{equation*}
        \tau_{\ge 1} \prod_p L_p X  
        \cong \tau_{\ge 1} \Fib{\tau_{\ge 1} \prod_p L_p Y \to \tau_{\ge 1} \prod_p L_p K}.
    \end{equation*}
    Now note that we have 
    \begin{equation*}
        \tau_{\ge 1} \prod_p L_p K \cong \tau_{\ge 1} \prod_p \pLoop \tau_{\ge 1} \Sigma^{n_n} L_p A_n \cong \tau_{\ge 1} \pLoop \prod_p \tau_{\ge 1} \Sigma^{n_n} L_p A_n,
    \end{equation*}
    where we used \cite[Corollary 3.18]{mattis2024unstable} and that $\pLoop$ commutes with limits as a right adjoint.
    Therefore, we see from \cite[Lemma A.11]{mattis2024unstable} that $\tau_{\ge 1} \prod_p L_p K$ is nilpotent because it 
    is a connected infinite loop sheaf.
    By induction, we see that $\tau_{\ge 1} \prod_p L_p Y$ is nilpotent.
    We therefore conclude by \cite[Lemma A.12]{mattis2024unstable} that $\tau_{\ge 1} \prod_p L_p X$
    is nilpotent as well.
\end{proof}

\section{Locally Finite-Dimensional Covers of \texorpdfstring{$\infty$}{infinity}-Topoi}
In this section, we introduce the notion of a locally finite-dimensional cover of an $\infty$-topos 
and prove some basic properties of such covers.

\begin{defn} \label{def:htpydim:cover}
    Let $\topos X$ be an $\infty$-topos.
    A \emph{locally finite-dimensional cover} $\col{\topos U_i}{i \in I}$ of $\topos X$ is collection of $\infty$-topoi $\topos U_i$ 
    and geometric morphisms 
    \begin{equation*}
        f^*_i \colon \topos X \rightleftarrows \topos U_i \colon f_{*,i}
    \end{equation*}
    that satisfy the following:
    \begin{enumerate}
        \item For $i \in I$, the $\infty$-topos $\topos U_i$ has enough points and is locally of homotopy dimension $\le n_i$ for some $n_i \ge 0$.
        \item The functor $f^*_i$ has a further left adjoint $f_{!,i}$ for each $i \in I$.
        \item Write $\Cat S_i$ for the collection of points of $\topos U_i$ for every $i \in I$.
            The collection
            \begin{equation*}
                \Cat S \coloneqq \bigcup_{i \in I} \set{s^* f^*_i \colon \topos X \to \An }{s^* \in \Cat S_i}
            \end{equation*} 
            of points of $\topos X$ is a conservative family (i.e.\ they jointly detect equivalences).
    \end{enumerate}
\end{defn}

\begin{lem} \label{lem:htpydim:jointly-conservative}
    Let $\topos X$ be an $\infty$-topos with a locally finite-dimensional cover $\col{\topos U_i}{i \in I}$.
    Then the functors $f^*_i \colon \topos X \to \topos U_i$ preserve all limits and colimits 
    and are jointly conservative.
\end{lem}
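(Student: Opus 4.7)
The plan is to extract all three assertions directly from the three clauses in \cref{def:htpydim:cover}; there is essentially no obstacle beyond unwinding definitions.

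For colimit preservation, I would simply note that each $f^*_i$ is the inverse image functor of a geometric morphism, hence a left adjoint, hence preserves all colimits. For limit preservation, I invoke condition (2) of the cover definition: $f^*_i$ admits a further left adjoint $f_{!,i}$, so $f^*_i$ is itself a right adjoint and therefore preserves all limits.

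For joint conservativity, suppose $\alpha$ is a morphism in $\topos X$ with $f^*_i(\alpha)$ an equivalence for every $i \in I$. Then for each $i$ and each point $s^* \in \Cat S_i$ of $\topos U_i$, the composite $s^* f^*_i(\alpha)$ is the image of an equivalence under a functor, hence is again an equivalence. By condition (3), the family
\begin{equation*}
    \Cat S = \bigcup_{i \in I}\set{s^* f^*_i}{s^* \in \Cat S_i}
\end{equation*}
is a conservative family of points of $\topos X$, which forces $\alpha$ itself to be an equivalence. The (mild) point to double-check is that condition (3) is phrased exactly as joint conservativity of the composites $s^* f^*_i$, which is precisely what the argument needs; no finiteness or truncation hypothesis on $\topos X$ is used, and the homotopy dimension bounds from condition (1) play no role at this stage.
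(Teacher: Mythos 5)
Your proof is correct and follows the paper's argument exactly: limit and colimit preservation come from $f^*_i$ being both a right adjoint (to $f_{*,i}$) and a left adjoint (to $f_{!,i}$, itself a right adjoint of $f_{!,i}$—rather, $f^*_i$ is right adjoint of $f_{!,i}$), and joint conservativity is unwound from the conservativity of the family $\Cat S$ of composite points. No gaps; the paper's proof is just a more compressed statement of the same reasoning.
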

\begin{proof}
    Each functor $f^*_i$ has both a left adjoint $f_{!,i}$ and a right adjoint $f_{*,i}$, this immediately implies the first statement.
    The second statement is clear since every point $s^* \in \Cat S$ factors through some $f^*_i$ by definition,
    and the $s^* \in \Cat S$ are jointly conservative.
\end{proof}

\begin{lem} \label{lem:htpydim:postnikov-complete}
    Let $\topos X$ be an $\infty$-topos with a locally finite-dimensional cover $\col{\topos U_i}{i\in I}$.
    Then $\topos X$ is Postnikov-complete (and thus in particular hypercomplete).
\end{lem}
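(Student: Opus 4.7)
The plan is to reduce Postnikov-completeness of $\topos X$ to that of the topoi $\topos U_i$ appearing in the cover, the latter being a known consequence of being locally of finite homotopy dimension.

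First, I would invoke the standard result that an $\infty$-topos which is locally of homotopy dimension $\le n$ for some $n$ is Postnikov-complete (see \cite[Proposition 7.2.1.10]{highertopoi} and the surrounding discussion of Postnikov towers). By hypothesis this applies to each $\topos U_i$, so each $\topos U_i$ is itself Postnikov-complete.

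Next, for any $X \in \topos X$ I would consider the canonical morphism $\eta_X \colon X \to \limil{n} \tau_{\le n} X$ and aim to show it is an equivalence. By \cref{lem:htpydim:jointly-conservative} the family $\col{f^*_i}{i \in I}$ is jointly conservative, so it suffices to check that $f^*_i \eta_X$ is an equivalence for every $i \in I$. Since $f^*_i$ admits both a left adjoint $f_{!,i}$ and a right adjoint $f_{*,i}$, it preserves all limits; and since it is the inverse image of a geometric morphism, it commutes with truncations by \cref{lem:localization:truncation-cover-geom-morphism}. Combining these, $f^*_i \eta_X$ is identified with the Postnikov comparison map
\[f^*_i X \to \limil{n} \tau_{\le n} f^*_i X\]
in $\topos U_i$, which is an equivalence by Postnikov-completeness of $\topos U_i$.

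Finally, the parenthetical statement about hypercompleteness is standard: any $\infty$-connective morphism $f$ becomes an equivalence after each $\tau_{\le n}$, hence after $\limil{n} \tau_{\le n}$, and then Postnikov-completeness forces $f$ itself to be an equivalence. The main obstacle I anticipate is in the first step, namely pinning down the precise reference for the implication ``locally of finite homotopy dimension $\Rightarrow$ Postnikov-complete''; once this is granted, the rest of the argument is a routine application of joint conservativity together with the fact that $f^*_i$, having adjoints on both sides and being geometric, preserves both limits and truncations.
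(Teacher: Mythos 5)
Your approach is the same as the paper's: reduce Postnikov-completeness to that of the $\topos U_i$ (known from \cite[Proposition 7.2.1.10]{highertopoi}) via joint conservativity of the $f^*_i$, which preserve limits and truncations. The hypercompleteness argument is also fine.

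However, there is a gap: you only verify half of what Postnikov-completeness requires. The characterization in \cite[Proposition 5.5.6.26]{highertopoi} — which the paper invokes — asks for \emph{two} things: (a) for every $X \in \topos X$ the map $X \to \limil{n} \tau_{\le n} X$ is an equivalence, and (b) for every Postnikov \emph{pretower} $(X_n)_n$, the canonical map $\tau_{\le n} \limil{m} X_m \to X_n$ is an equivalence. Your argument establishes only (a), which says the comparison functor $\topos X \to \limil{n} \tau_{\le n}\topos X$ is fully faithful; (b) is needed for essential surjectivity. Condition (a) does not imply (b), because $\tau_{\le n}$, being a left adjoint, has no reason to commute with the sequential limit defining $\limil{m} X_m$. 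The good news is that (b) is handled by the same mechanism: applying $f^*_i$ carries the Postnikov pretower $(X_n)_n$ to a Postnikov pretower $(f^*_i X_n)_n$ in $\topos U_i$ (since $f^*_i$ preserves truncations), $f^*_i$ preserves the limit $\limil{m} X_m$ and the truncation $\tau_{\le n}$, and Postnikov-completeness of $\topos U_i$ gives $\tau_{\le n} \limil{m} f^*_i X_m \cong f^*_i X_n$; joint conservativity then finishes the argument. You should add this second check explicitly. (As a side note, condition (a) alone does suffice for the hypercompleteness claim, so that part of your write-up is unaffected.)
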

\begin{proof}
    Note that $\topos U_i$ is Postnikov-complete for every $i \in I$, see \cite[Proposition 7.2.1.10]{highertopoi}.
    Using \cite[Proposition 5.5.6.26]{highertopoi}, 
    we have to show that for every $X \in \topos X$ the canonical map $X \to \limil{n} \tau_{\le n} X$
    is an equivalence, and that for every Postnikov pretower $(X_n)_n$,
    the canonical map $\tau_{\le n} \limil{n} X_n \to X_n$ is an equivalence.
    Since the $f^*_i$ commute with truncations and limits and are jointly conservative by \cref{lem:localization:truncation-cover-geom-morphism,lem:htpydim:jointly-conservative},
    the result follows immediately from the Postnikov-completeness of $\topos U_i$.
    
    Hypercompleteness follows from the proof of \cite[Corollary 7.2.1.12]{highertopoi},
    which only uses Postnikov-completeness of the topos.
\end{proof}

\section{Locally Highly Connected Towers}
In this section, we introduce locally highly connected towers in a $\topos X$, subordinate to some locally finite-dimensional cover $\Cat U$.
We prove that $R$-localization commutes with limits along such towers,
and provide some examples.

\begin{defn}
    Let $\topos X$ be an $\infty$-topos with a locally finite-dimensional cover $\Cat U \coloneqq \col{\topos U_i}{i \in I}$,
    and let $(X_n)_n$ be an $\N$ indexed tower in $\topos X_*$ consisting of connected objects.
    We say that the tower is
    \begin{enumerate}
        \item \emph{nilpotent} if every sheaf $X_n$ is nilpotent,
        \item \emph{highly connected} if for every $k \ge 1$ there is a $N_k \ge 1$ such that $\pi_k(X_n) \cong \pi_k(X_{N_k})$ for all $n \ge N_k$,
        \item \emph{locally highly connected (subordinate to $\Cat U$)} if the tower $(f^*_i X_n)_n$ is highly connected for every $i \in I$.
    \end{enumerate}
\end{defn}

\begin{rmk}
    Let $(X_n)_n$ be a tower of connected objects in some $\infty$-topos $\topos X$.
    Then the tower is highly connected if and only if the connectivity of the fibers $\Fib{X_n \to X_{n-1}}$ goes to $\infty$ as $n \to \infty$.
\end{rmk}

\begin{exmpl} \label{lem:htpydim:highly-connected-geom-morphism}
    Let $f^* \colon \topos X \rightleftarrows \topos Y \colon f_*$ be a geometric morphism of $\infty$-topoi.
    Since $f^*$ commutes with homotopy objects,
    it follows that if $(X_n)_n$ is a highly connected tower in $\topos X$, then also $(f^* X_n)_n$ is highly connected.
\end{exmpl}

\begin{lem} \label{lem:htpydim:highly-connected-post-tower}
    Let $\topos X$ be an $\infty$-topos with a locally finite-dimensional cover $\Cat U \coloneqq \col{\topos U_i}{i\in I}$,
    and let $X \in \topos X_*$ be a connected pointed sheaf.
    The Postnikov tower $(\tau_{\le n} X)_n$ is locally highly connected.
    If $X$ is moreover nilpotent, then so is the Postnikov tower.
\end{lem}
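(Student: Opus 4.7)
The plan is to observe that both claims follow almost directly from properties already established in the excerpt, together with basic facts about Postnikov towers.

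For the local high connectivity, I would fix $i \in I$ and show that the tower $(f_i^* \tau_{\le n} X)_n$ is highly connected in $\topos U_i$. The key input is \cref{lem:localization:truncation-cover-geom-morphism}, which gives an equivalence $f_i^* \tau_{\le n} X \cong \tau_{\le n} f_i^* X$. So it suffices to show that for any connected pointed sheaf $Y$ in $\topos U_i$, the Postnikov tower $(\tau_{\le n} Y)_n$ is highly connected. Using the same lemma on the homotopy object functors $\pi_k$, or more directly the general fact that $\pi_k(\tau_{\le n} Y) \cong \pi_k(Y)$ whenever $n \ge k$ (and vanishes for $n < k$), we can simply take $N_k \coloneqq k$: for every $n \ge N_k$ we have $\pi_k(\tau_{\le n} Y) \cong \pi_k(Y) \cong \pi_k(\tau_{\le N_k} Y)$, verifying the highly connected condition. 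Note that this argument actually shows that the Postnikov tower of any connected pointed sheaf is highly connected in any $\infty$-topos, so the locally finite-dimensional cover hypothesis is not strictly needed for this half of the statement.

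For the nilpotence claim, assume that $X$ is nilpotent. Then \cref{lem:localization:nilpotence-truncation} (which is already proven in the excerpt) immediately gives that each $\tau_{\le n} X$ is nilpotent, which is exactly what is needed.

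There is no real obstacle here: the first part is essentially the definitional stabilization of homotopy groups along a Postnikov tower, invariantly transported to each $\topos U_i$ by the fact that $f_i^*$ is a geometric morphism and hence commutes with truncations. The second part is a direct citation of an earlier lemma. The only thing to be careful about is to explicitly invoke \cref{lem:localization:truncation-cover-geom-morphism} when passing along $f_i^*$, so that one reduces the verification to $\topos U_i$ rather than trying to compute homotopy objects of truncations in $\topos X$ itself.
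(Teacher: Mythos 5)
Your proposal is correct and follows essentially the same route as the paper. The paper's proof observes that the Postnikov tower of a connected object is highly connected in $\topos X$ itself (the $\pi_k$ stabilize at $N_k = k$), and then invokes \cref{lem:htpydim:highly-connected-geom-morphism} (that geometric morphisms preserve highly connected towers, since $f^*$ commutes with homotopy objects) to conclude local high connectivity; you unwind this by commuting $f_i^*$ past $\tau_{\le n}$ via \cref{lem:localization:truncation-cover-geom-morphism} and verifying the stabilization directly in each $\topos U_i$ — the same argument, just with the commutation fact applied to truncations rather than homotopy objects. The nilpotence half is handled identically via \cref{lem:localization:nilpotence-truncation}.
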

\begin{proof}
    The Postnikov tower is highly connected, so in particular locally highly connected by \cref{lem:htpydim:highly-connected-geom-morphism}.
    If $X$ is nilpotent, then so is $\tau_{\le n} X$ for all $n$, see \cref{lem:localization:nilpotence-truncation}.
\end{proof}

\begin{lem} \label{lem:htpydim:highly-connected-limit-stalk}
    Let $\topos X$ be an $\infty$-topos locally of homotopy dimension $\le N$ for some $N$, 
    $s^* \colon \topos X \to \An$ be a point,
    and $(X_n)_n$ be a highly connected tower in $\topos X_*$.
    Then the natural map $s^* \limil{n} X_n \to \limil{n} s^* X_n$ is an equivalence.
\end{lem}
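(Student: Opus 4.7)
The plan is to reduce the comparison of infinite-tower limits to a compatible system of truncated comparisons, which are controlled by the eventual stabilization of the tower on each homotopy sheaf. Since $\topos X$ is locally of homotopy dimension $\le N$, it is Postnikov complete (compare \cref{lem:htpydim:postnikov-complete}, whose proof uses only Postnikov completeness of the cover; this is a standard consequence of finite local homotopy dimension, see e.g.\ \cite[Section 7.2.1]{highertopoi}). Since $\An$ is also Postnikov complete and the stalk functor $s^*$ commutes with truncation (\cref{lem:localization:truncation-cover-geom-morphism}) and with finite limits, it will suffice to check that the natural comparison map $\phi \colon s^* \limil{n} X_n \to \limil{n} s^* X_n$ is an equivalence after applying $\tau_{\le k}$ for every $k \ge 0$.

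The first main step is to identify the Postnikov tower of $\limil{n} X_n$ in $\topos X$. Using the highly connected hypothesis, I would pick for each $k$ an integer $N_k$ (chosen increasing in $k$) so that $\tau_{\le k} X_n \cong \tau_{\le k} X_{N_k}$ for all $n \ge N_k$. Postnikov completeness of $\topos X$ combined with the standard Fubini for limits then gives
\[
    \limil{n} X_n \;\cong\; \limil{n} \limil{k} \tau_{\le k} X_n \;\cong\; \limil{k} \limil{n} \tau_{\le k} X_n \;\cong\; \limil{k} \tau_{\le k} X_{N_k},
\]
and this exhibits $(\tau_{\le k} X_{N_k})_k$ as (a cofinal tower in) the Postnikov tower of $\limil{n} X_n$. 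In particular, $\tau_{\le k}(\limil{n} X_n) \cong \tau_{\le k} X_{N_k}$ for every $k$.

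The second step is to apply $s^*$ and then run the analogous argument inside $\An$. Using that $s^*$ commutes with truncations, we get
\[
    \tau_{\le k}(s^* \limil{n} X_n) \;\cong\; s^*(\tau_{\le k} \limil{n} X_n) \;\cong\; s^* \tau_{\le k} X_{N_k} \;\cong\; \tau_{\le k}(s^* X_{N_k}).
\]
The tower $(s^* X_n)_n$ in $\An$ is highly connected by \cref{lem:htpydim:highly-connected-geom-morphism}, and $\An$ is Postnikov complete, so the first step applied inside $\An$ yields $\tau_{\le k}(\limil{n} s^* X_n) \cong \tau_{\le k}(s^* X_{N_k})$ as well. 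Hence $\phi$ induces equivalences on every $\tau_{\le k}$, and passing to the inverse limit over $k$ using Postnikov convergence in $\An$ shows that $\phi$ itself is an equivalence.

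The main obstacle I anticipate is the identification of $(\tau_{\le k} X_{N_k})_k$ with the Postnikov tower of $\limil{n} X_n$, in particular the limit-swap and the commutation of $\tau_{\le k}$ with the (Postnikov-like) tower limit in $\topos X$. Both rest on Postnikov completeness of $\topos X$, which is where the finite-homotopy-dimension assumption is used in an essential way; once this is in hand the rest of the argument is a transparent bookkeeping of truncations through $s^*$.
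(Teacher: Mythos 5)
Your proof is correct, and it takes a genuinely different route from the paper's, though the two are closely related in spirit. The paper works at the level of homotopy sheaves: it first checks on $\pi_k$ that both sides agree with $s^*\pi_k(X_{N_k})$, with the key step being the claim that $\pi_k(\limil{n} X_n) \cong \pi_k(X_{N_k})$, which it establishes by reducing to sections over objects $U$ of homotopy dimension $\le N$, invoking \cite[Lemma 7.2.1.7]{highertopoi} for the connectivity drop, and then controlling the $\lim^1$ term via the Milnor sequence; finally it uses Whitehead. Your proof instead packages all of this into a single invocation of Postnikov completeness of $\topos X$ (which does follow from local finite homotopy dimension, and is indeed how \cref{lem:htpydim:postnikov-complete} is proved): the highly connected hypothesis makes $(\tau_{\le k} X_{N_k})_k$ a Postnikov pretower whose limit is $\limil{n} X_n$, so the identification $\tau_{\le k}(\limil{n} X_n) \cong \tau_{\le k} X_{N_k}$ drops out of the definition, and the rest is bookkeeping of truncations through $s^*$. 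What the paper's approach buys is that it is self-contained and makes the role of the homotopy dimension bound quantitatively explicit; what your approach buys is a cleaner conceptual formulation that avoids re-deriving the Milnor sequence estimate by hand. One small point worth spelling out: to conclude that $\tau_{\le k}(\phi)$ is an equivalence (not merely that the two truncations are abstractly isomorphic), you should note that both identifications with $\tau_{\le k}(s^* X_{N_k})$ are induced by the respective projections to $s^* X_{N_k}$, and these commute with $\phi$, so the comparison triangle identifies $\tau_{\le k}(\phi)$ with an identity. This is easy, but is needed to legitimately pass to the limit over $k$.
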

\begin{proof}
    We will repeatedly use that $\pi_n(s^*(-)) \cong s^*(\pi_n(-))$,
    see \cref{lem:localization:truncation-cover-geom-morphism}.
    We first show that the natural map induces isomorphisms 
    \begin{equation*}
        \pi_k(s^* \limil{n} X_n) \to \pi_k(\limil{n} s^* X_n)
    \end{equation*}
    for every $k \ge 0$ (for the induced basepoints on the respective objects).
    By assumption, for every $k \ge 0$ there is an $N_k \ge 1$ such that the transition maps induce 
    equivalences 
    \begin{equation*}
        \pi_k(s^* X_n) \cong s^* \pi_k(X_n) \to s^* \pi_k(X_{N_k})
    \end{equation*} 
    for all $n \ge N_k$ (in the case $k = 0$ we can take $N_0 = 1$, as the objects are connected by assumption).
    We may assume without loss of generality that $N_k \ge N_{k'}$ whenever $k > k'$ by taking the maximum.
    Thus, we get from e.g.\ \cite[Lemma 3.24]{mattis2024unstable} an equivalence 
    \begin{equation*}
        \pi_k(\limil{n} s^* X_n) \cong s^* \pi_k(X_{N_k}).
    \end{equation*}
    On the other hand, $\pi_k(s^* \limil{n} X_n) \cong s^* \pi_k(\limil{n} X_n)$.
    It therefore suffices to show that $\pi_k(\limil{n} X_n) \cong \pi_k(X_{N_k})$ via the projection.
    Because the $N_k$ are non-decreasing, this is equivalent to showing that 
    the map fiber $\Fib{\limil{n} X_n \to X_{N_k}}$ is $k$-connective. 
    Since the topos is generated under colimits by objects $U$ of homotopy dimension $\le N$,
    it suffices to prove that for any such $U$ the anima
    \begin{equation*}
        \Fib{\limil{n} X_n \to X_{N_k}}(U) \cong \Fib{\limil{n} X_n(U) \to X_{N_k}(U)}.
    \end{equation*}
    is $k$-connective,
    By assumption, the map $X_n \to X_{N_k}$ is $k$-connective, therefore,
    the map $X_n(U) \to X_{N_k}(U)$ is $k - N$-connective 
    as $U$ has homotopy dimension $N$ (see \cite[Lemma 7.2.1.7]{highertopoi}).
    This immediately implies that $\Fib{\limil{n} X_n(U) \to X_{N_k}(U)}$ is $k - N - 1$-connective,
    because sequential limits of anima can lower the connectivity by at most one (this follows from the Milnor sequence).
    The result follows by reindexing the $N_k$.

    The lemma now follows from Whitehead's theorem, if we can show that $s^* \limil{n} X_n$ (and thus also $\limil{n} s^* X_n$) 
    is connected.
    But we have seen above that $\pi_0(s^* \limil{n} X_n) \cong s^* \pi_0(\limil{n} X_n) \cong s^* \pi_0 (X_{N_0}) = *$,
    since by assumption $X_n$ is connected for every $n$.
\end{proof}

\begin{cor} \label{lem:htpydim:locally-highly-connected-limit-stalk}
    Let $\topos X$ be an $\infty$-topos with a locally finite-dimensional cover $\Cat U \coloneqq \col{\topos U_i}{i\in I}$,
    and $(X_n)_n$ be a locally highly connected tower subordinate to $\Cat{U}$.
    Let $s^* \in \Cat S$ be a point (i.e.\ the point $s^*$ factors over some $f^*_i$).
    Then the natural map $s^* \limil{n} X_n \to \limil{n} s^* X_n$ is an equivalence.
\end{cor}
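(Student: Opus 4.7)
The plan is to reduce the statement to \cref{lem:htpydim:highly-connected-limit-stalk}, which already handles the case of a topos that is \emph{globally} locally of bounded homotopy dimension. By the definition of $\Cat S$ in the third clause of \cref{def:htpydim:cover}, the point $s^* \in \Cat S$ admits a factorization $s^* \cong t^* \circ f^*_i$ for some $i \in I$ and some point $t^* \colon \topos U_i \to \An$. This factorization is the only piece of data I need to invoke.

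First I would use that $f^*_i$ preserves all limits, which was established in \cref{lem:htpydim:jointly-conservative}, to commute $f^*_i$ past the sequential limit, obtaining
\begin{equation*}
    s^* \limil{n} X_n \;\cong\; t^* f^*_i \limil{n} X_n \;\cong\; t^* \limil{n} f^*_i X_n.
\end{equation*}
Next I would verify that the hypotheses of \cref{lem:htpydim:highly-connected-limit-stalk} are satisfied for the topos $\topos U_i$, the point $t^*$, and the tower $(f^*_i X_n)_n$: the topos $\topos U_i$ is locally of homotopy dimension $\le n_i$ by the first clause of \cref{def:htpydim:cover}; the tower $(f^*_i X_n)_n$ consists of connected objects because $f^*_i$ is a geometric morphism and hence commutes with $\pi_0$ (this also uses that $(X_n)_n$ consists of connected objects by definition of a locally highly connected tower); and $(f^*_i X_n)_n$ is highly connected in $\topos U_i$ by the very definition of being locally highly connected subordinate to $\Cat U$.

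Applying \cref{lem:htpydim:highly-connected-limit-stalk} then gives
\begin{equation*}
    t^* \limil{n} f^*_i X_n \;\cong\; \limil{n} t^* f^*_i X_n \;\cong\; \limil{n} s^* X_n,
\end{equation*}
and composing with the earlier equivalence yields the claim. There is no genuine obstacle here: the argument is purely a bookkeeping reduction, and the substantive content — that highly connected towers commute with stalks in a topos of bounded local homotopy dimension — has already been carried out in \cref{lem:htpydim:highly-connected-limit-stalk}.
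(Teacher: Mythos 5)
Your proof is correct and follows essentially the same route as the paper: factor the point through $f^*_i$, use \cref{lem:htpydim:jointly-conservative} to commute $f^*_i$ past the limit, and then apply \cref{lem:htpydim:highly-connected-limit-stalk} in $\topos U_i$. The only difference is that you spell out the hypothesis checks in more detail, while the paper states the two equivalences more tersely.
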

\begin{proof}
    Write $s^* \cong s^*_i f^*_i$. The natural map factors as 
    \begin{equation*}
        s^*_i f^*_i \limil{n} X_n \to s^*_i \limil{n} f^*_i X_n \to \limil{n} s^*_i f^*_i X_n,
    \end{equation*}
    which are equivalences by \cref{lem:htpydim:jointly-conservative,lem:htpydim:highly-connected-limit-stalk}, 
    as $(f^*_i X_n)_n$ is highly connected by definition.
\end{proof}

We now prove that the $R$-localization commutes with limits along locally highly connected towers.
\begin{lem} \label{lem:htpydim:R-loc-highly-connected}
    Let $\topos X$ be an $\infty$-topos with a locally finite-dimensional cover $\Cat U \coloneqq \col{\topos U_i}{i \in I}$.
    If $(X_n)_n$ is a nilpotent locally highly connected tower subordinate to $\Cat U$,
    then so is $(L_R X_n)_n$.
\end{lem}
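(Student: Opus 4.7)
The plan is to verify the three defining properties of a nilpotent locally highly connected tower for $(L_R X_n)_n$: connectedness of each term, nilpotence of each term, and high connectivity of $(f^*_i L_R X_n)_n$ for each $i \in I$. Note that $\topos X$ itself has enough points by the definition of a locally finite-dimensional cover (the set $\Cat S$ is a conservative family of points), so all of the results from \cref{section:unstable} that require enough points are available to us.

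First I would dispatch the easy properties. Each $L_R X_n$ is connected by \cref{lem:localization:pi-0-eq} applied to the $R$-local equivalence $X_n \to L_R X_n$, since $X_n$ is connected by assumption. Each $L_R X_n$ is nilpotent by \cref{lem:localization:nilpotence-stability}, applied in $\topos X$. The tower structure is automatic from the functoriality of $L_R$.

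The content of the lemma is thus the local high connectivity. Fix $i \in I$. Since $\topos U_i$ has enough points by \cref{def:htpydim:cover}, \cref{lem:localization:R-loc-geom-enough-points} applied to the geometric morphism $f^*_i \dashv f_{*,i}$ yields a canonical equivalence
\begin{equation*}
    f^*_i L_R X_n \cong L_R f^*_i X_n,
\end{equation*}
so it suffices to show that the tower $(L_R f^*_i X_n)_n$ in $\topos U_i$ is highly connected. Writing $Y_n \coloneqq f^*_i X_n$, the tower $(Y_n)_n$ is a highly connected tower of connected nilpotent sheaves in $\topos U_i$ (nilpotence of each $Y_n$ follows because $f^*_i$ preserves nilpotence, being a left exact left adjoint that preserves homotopy objects by \cref{lem:localization:truncation-cover-geom-morphism}; high connectivity is the defining hypothesis on $(X_n)_n$).

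The final step is to show: if $(Y_n)_n$ is a highly connected tower of connected nilpotent pointed sheaves in an $\infty$-topos with enough points, then $(L_R Y_n)_n$ is highly connected. Fix $k \ge 1$, and pick $N_k$ as in the definition of highly connected, so that the transition maps induce equivalences $\pi_k(Y_n) \xrightarrow{\simeq} \pi_k(Y_{N_k})$ for all $n \ge N_k$. By the equivalence $\pi_k(L_R(-)) \cong L_R \pi_k(-)$ used in the proof of \cref{lem:localization:R-local-on-htpy} (citing \cite[Corollary 2.3.13(2)]{asok2022localization}), and the fact that $L_R$ preserves isomorphisms, we obtain
\begin{equation*}
    \pi_k(L_R Y_n) \cong L_R \pi_k(Y_n) \xrightarrow{\simeq} L_R \pi_k(Y_{N_k}) \cong \pi_k(L_R Y_{N_k})
\end{equation*}
for all $n \ge N_k$, which is precisely the high connectivity condition for $(L_R Y_n)_n$. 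No step looks genuinely hard; the only subtlety is remembering to invoke \cref{lem:localization:R-loc-geom-enough-points} (which needs enough points on $\topos U_i$) in order to commute $f^*_i$ past $L_R$, and ensuring the nilpotence hypothesis is available at each stage so that the homotopy-group criterion of \cref{lem:localization:R-local-on-htpy} applies.
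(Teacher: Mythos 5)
Your proof is correct and follows essentially the same route as the paper: dispose of nilpotence via \cref{lem:localization:nilpotence-stability}, commute $f^*_i$ past $L_R$ using \cref{lem:localization:R-loc-geom-enough-points} (valid since $\topos U_i$ has enough points), and then reduce high connectivity to $\pi_k(L_R(-)) \cong L_R\pi_k(-)$ from \cref{lem:localization:R-local-on-htpy}, which makes the system $(\pi_k(f_i^* L_R X_n))_n$ eventually constant with the same bound $N_k$. You add explicit checks (connectedness of $L_R X_n$ via \cref{lem:localization:pi-0-eq}, preservation of nilpotence under $f_i^*$) that the paper treats as implicit, but the substance of the argument is the same.
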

\begin{proof}
    First note that $(L_R X_n)_n$ consists of nilpotent objects by \cref{lem:localization:nilpotence-stability}.
    Let $i \in I$.
    By assumption, $(f^*_i X_n)_n$ is highly connected, i.e.\ for every $k \ge 0$
    the system $(\pi_k(f^*_i X_n))_n$ is eventually constant, say, for $n \ge N_k$ for some $N_k$.
    So pick $k \ge 0$, we have to show that the same is true for the system $(\pi_k(f^*_i L_R X_n))_n$.
    We claim that the same $N_k$ works: We have
    \begin{equation*} 
        \pi_k(f^*_i L_R X_n) 
        \cong \pi_k(L_R f^*_i X_n)
        \cong L_R \pi_k(f^*_i X_n),
    \end{equation*}
    where we used \cref{lem:localization:R-loc-geom-enough-points} in the first equivalence
    and \cref{lem:localization:R-local-on-htpy} for the second equivalence
    (note that $f^*_i X_n$ is nilpotent).
    But this is constant for $n \ge N_k$ by assumption.
\end{proof}

\begin{lem} \label{lem:htpydim:lim-of-R-local-equivs}
    Let $(f_n)_n \colon (X_n)_n \to (Y_n)_n$ be a morphism of nilpotent highly connected towers of anima.
    Suppose that $f_n$ is an $R$-local equivalence for all $n$.
    Then $\limil{n} f_n \colon \limil{n} X_n \to \limil{n} Y_n$ 
    is an $R$-local equivalence.
\end{lem}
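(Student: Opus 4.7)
The plan is to apply \cref{lem:localization:R-local-on-htpy} (in the $\infty$-topos $\An$, which trivially has enough points) to the map $\limil{n} f_n \colon \limil{n} X_n \to \limil{n} Y_n$. This reduces the problem to showing (a) both limits are nilpotent pointed anima, and (b) $\pi_k(\limil{n} f_n)$ is an $R$-local equivalence for every $k \ge 1$.

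First I would unwind the highly connected hypothesis: for each $k \ge 0$ I fix $N_k \ge 1$ such that the transition maps induce isomorphisms $\pi_k(X_n) \cong \pi_k(X_{N_k})$ and $\pi_k(Y_n) \cong \pi_k(Y_{N_k})$ for all $n \ge N_k$, chosen nondecreasing in $k$. Since the tower $(\pi_{k+1}(X_n))_n$ is then eventually constant with invertible transitions, it is Mittag-Leffler, so the Milnor short exact sequence collapses to a natural isomorphism $\pi_k(\limil{n} X_n) \cong \pi_k(X_{N_k})$; the same holds for $Y$. In particular both limits are connected (since $\pi_0(X_{N_0}) = * = \pi_0(Y_{N_0})$), and under these identifications $\pi_k(\limil{n} f_n)$ corresponds to $\pi_k(f_{N_k})$.

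Next I would verify that $\limil{n} X_n$ (and symmetrically $\limil{n} Y_n$) is nilpotent. The projection $\limil{n} X_n \to X_{N_k}$ is equivariant for the induced map on $\pi_1$. For $N_k \ge N_1$, both the projection $\limil{n} X_n \to X_{N_1}$ and the transition $X_{N_k} \to X_{N_1}$ induce isomorphisms on $\pi_1$ (the former by the previous paragraph, the latter by the choice of $N_1$), so the projection $\limil{n} X_n \to X_{N_k}$ induces an isomorphism on $\pi_1$ as well. Consequently, the $\pi_1(\limil{n} X_n)$-action on $\pi_k(\limil{n} X_n) \cong \pi_k(X_{N_k})$ is identified with the natural $\pi_1(X_{N_k})$-action on $\pi_k(X_{N_k})$, which is nilpotent because $X_{N_k}$ is.

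Finally, for each $k \ge 1$, the map $f_{N_k}$ is an $R$-local equivalence between the nilpotent pointed anima $X_{N_k}$ and $Y_{N_k}$, so \cref{lem:localization:R-local-on-htpy} implies that $\pi_k(f_{N_k})$ is an $R$-local equivalence of groups. Under the identifications above this is precisely $\pi_k(\limil{n} f_n)$, so a second application of \cref{lem:localization:R-local-on-htpy} to $\limil{n} f_n$ finishes the proof. I expect the main obstacle to be the nilpotence check, which requires carefully tracking the $\pi_1$-action on $\pi_k$ of the limit and identifying it with a known nilpotent action at some finite stage in the tower; everything else follows directly from the highly connected hypothesis together with results already established.
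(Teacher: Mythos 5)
Your proposal is correct and follows the same strategy as the paper's proof: reduce to homotopy groups via \cref{lem:localization:R-local-on-htpy}, use the highly connected hypothesis and the Milnor sequence to identify $\pi_k$ of the limit with $\pi_k(X_{N_k})$, and then apply the same proposition once more at each level. The one place where you are more explicit than the paper is the verification that $\limil{n} X_n$ and $\limil{n} Y_n$ are themselves nilpotent — a hypothesis genuinely needed to invoke \cref{lem:localization:R-local-on-htpy} — whereas the paper silently delegates this to the cited external reference; your careful tracking of the $\pi_1$-action on $\pi_k$ of the limit and its identification with the action at a finite stage $X_{N_k}$ is exactly what is needed to fill that gap.
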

\begin{proof}
    Note that by definition, all the $X_n$ and $Y_n$ are nilpotent.
    Choose integers $N_k > 0$ for every $k \ge 0$ as in the defintion of highly connected towers,
    by taking the maximum we may assume that the $N_k$ work for both $(X_n)_n$ and $(Y_n)_n$.
    We know that $\pi_k(\limil{n} X_n) \cong \pi_k(X_{N_k})$ and $\pi_k(\limil{n} Y_n) \cong \pi_k(Y_{N_k})$
    for all $k \ge 0$, see e.g.\ \cite[Lemma 3.24]{mattis2024unstable}.
    By \cref{lem:localization:R-local-on-htpy}, it suffices to show that 
    the map $\pi_k(\limil{n} f_n)$ is an $R$-local equivalence for every $k \ge 0$,
    but this map corresponds under the above equivalences to the map $\pi_k(f_{N_k}) \colon \pi_k(X_{N_k}) \to \pi_k(Y_{N_k})$,
    which is an $R$-local equivalence by the same lemma.
\end{proof}

\begin{prop} \label{lem:htpydim:R-loc-of-highly-connected}
    Let $\topos X$ be an $\infty$-topos with a locally finite-dimensional cover $\Cat U \coloneqq \col{\topos U_i}{i \in I}$,
    and $(X_n)_n$ be a nilpotent locally highly connected tower subordinate to $\Cat U$.
    Then $L_R \limil{n} X_n \cong \limil{n} L_R X_n$.
\end{prop}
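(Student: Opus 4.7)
The plan is to verify the two conditions that identify $\lim{n} L_R X_n$ with the $R$-localization of $\lim{n} X_n$ via the canonical comparison map induced from the units $X_n \to L_R X_n$: namely
\begin{enumerate}
    \item $\lim{n} L_R X_n$ is $R$-local, and
    \item the canonical map $\lim{n} X_n \to \lim{n} L_R X_n$ is an $R$-local equivalence.
\end{enumerate}
Both assertions are invariant under passing to stalks in the family $\Cat S$, so by \cref{lem:localization:R-loc-on-points} together with the conservativity of $\Cat S$ coming from \cref{def:htpydim:cover}, it suffices to verify (1) and (2) after applying any point $s^* \in \Cat S$.

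To bring the two limits inside the stalk, I would apply \cref{lem:htpydim:R-loc-highly-connected} to get that $(L_R X_n)_n$ is again a nilpotent locally highly connected tower subordinate to $\Cat U$. Then \cref{lem:htpydim:locally-highly-connected-limit-stalk} gives natural equivalences
\[ s^* \lim{n} X_n \cong \lim{n} s^* X_n \quad\text{and}\quad s^* \lim{n} L_R X_n \cong \lim{n} s^* L_R X_n \cong \lim{n} L_R s^* X_n, \]
where in the last step I use the stalk-commutation of $L_R$ from \cref{lem:localization:R-loc-on-points}. So (1) is reduced to showing that $\lim{n} L_R s^* X_n$ is $R$-local in $\An$, and (2) is reduced to showing that the induced map $\lim{n} s^* X_n \to \lim{n} L_R s^* X_n$ is an $R$-local equivalence of anima.

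For the $R$-local reduction of (1), I would note that the full subcategory of $R$-local anima is reflective (hence closed under limits), so any limit of $R$-local anima is $R$-local. For (2), the idea is to apply \cref{lem:htpydim:lim-of-R-local-equivs} to the morphism of towers $(s^* X_n)_n \to (L_R s^* X_n)_n$ whose components are $R$-local equivalences by construction. The source tower is nilpotent and highly connected by definition of locally highly connected subordinate to $\Cat U$; the target tower is nilpotent by \cref{lem:localization:nilpotence-stability} and highly connected because $\pi_k(L_R s^* X_n) \cong L_R \pi_k(s^* X_n)$ via \cref{lem:localization:R-local-on-htpy}, which stabilizes in $n$ as soon as $(\pi_k(s^* X_n))_n$ does.

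The main obstacle is essentially bookkeeping: one needs the two towers in $\An$ obtained by taking stalks of $(X_n)_n$ and $(L_R X_n)_n$ to both satisfy the hypotheses of \cref{lem:htpydim:lim-of-R-local-equivs}, and one needs to check that the various commutations (stalks with $L_R$, stalks with towers, $\pi_k$ with $L_R$) align into the correct comparison map. Beyond that, no new tools are required: the argument is a direct assembly of the earlier lemmas.
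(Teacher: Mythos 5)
Your proposal is correct and follows essentially the same route as the paper: reduce to stalks via the conservativity of $\Cat S$, use \cref{lem:htpydim:R-loc-highly-connected} to know that $(L_R X_n)_n$ is again nilpotent locally highly connected, commute stalks past the limits via \cref{lem:htpydim:locally-highly-connected-limit-stalk}, and conclude with \cref{lem:htpydim:lim-of-R-local-equivs}. The only minor divergence is organizational: the paper verifies $R$-locality of $\lim_n L_R X_n$ directly in $\topos X$ (a limit of $R$-local objects is $R$-local) before passing to stalks, whereas you check both $R$-locality and the $R$-local equivalence on stalks, which is slightly redundant but harmless.
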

\begin{proof}
    Since a limit of $R$-local objects is $R$-local,
    it suffices to show that the natural map $\limil{n} X_n \to \limil{n} L_R X_n$
    is an $R$-local equivalence.
    By \cref{lem:localization:R-loc-on-points} this can be checked on the conservative family of points $\Cat S$.
    So suppose that $s^* f_i^* \in \Cat S$ is such a point. 
    Note that both $(X_n)_n$ and $(L_R X_n)_n$ 
    are nilpotent locally highly connected, the first by assumption,
    the second by \cref{lem:htpydim:R-loc-highly-connected}.
    Therefore, the towers $(f_i^* X_n)_n$ and $(f_i^* L_R X_n)_n$ are nilpotent highly connected by definition.
    Consider the following commutative diagram: 
    \begin{center}
        \begin{tikzcd}
            s^* f_i^* \limil{n} X_n \arrow[d, "\cong"] \arrow[r] &s^* f_i^* \limil{n} L_R X_n \arrow[d, "\cong"] \\
            \limil{n} s^* f_i^* X_n \arrow[r] &\limil{n} s^* f_i^* L_R X_n. \\
        \end{tikzcd}
    \end{center}
    The vertical arrows are equivalences by \cref{lem:htpydim:jointly-conservative,lem:htpydim:highly-connected-limit-stalk}.
    It therefore suffices to show that the map $\limil{n} s^* f_i^* X_n \to \limil{n} s^* f_i^* L_R X_n$
    is an $R$-local equivalence in $\An$.
    Note that $s^* f_i^* X_n \to s^* f_i^* L_R X_n$ is an $R$-local equivalence by \cref{lem:localization:R-loc-on-points}.
    The lemma thus follows from \cref{lem:htpydim:lim-of-R-local-equivs},
    since the towers $(s^* f_i^* X_n)_n$ and $(s^* f_i^* L_R X_n)_n$ are clearly also nilpotent highly connected. 
\end{proof}

We end this section by proving that $p$-completion of a nilpotent sheaf can be computed on the Postnikov tower.
This is a reformulation of \cite[Theorem 3.27]{mattis2024unstable}, but in the context of an $\infty$-topos $\topos X$
with a locally finite-dimensional cover.

\begin{lem} \label{lem:htpydim:truncated-nilpotent-geom-mor-p-comp}
    Let $f^* \colon \topos X \rightleftarrows \topos Y \colon f_*$ be a geometric 
    morphism of $\infty$-topoi such that $f^*$ has a further left adjoint $f_!$.
    For a sheaf $X \in \topos X$ we have a canonical equivalence $L_p f^* X \cong f^* L_p X$.
\end{lem}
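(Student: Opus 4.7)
The plan is to verify that $f^* L_p X$, together with the map $f^* X \to f^* L_p X$ obtained by applying $f^*$ to the unit $X \to L_p X$, satisfies the universal property of the $p$-completion of $f^* X$ in $\topos Y$. By the description of $L_p$ as the localization at $p$-equivalences from \cite[Section 3]{mattis2024unstable}, this reduces to two claims: (i) the map $f^* X \to f^* L_p X$ is a $p$-equivalence in $\topos Y$, and (ii) the object $f^* L_p X$ is $p$-complete.

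Claim (i) amounts to showing that $f^*$ preserves $p$-equivalences. Since $f^*$ preserves the terminal object and coproducts, it commutes with $(-)_+$; and the induced colimit-preserving functor $f^*_{\Sp} \colon \Sp(\topos X) \to \Sp(\topos Y)$ on stabilizations satisfies $f^*_{\Sp} \pSus \cong \pSus f^*$ (and commutes with the finite colimit $\sslash p$). Hence $\pSus(f^* g) \sslash p \cong f^*_{\Sp}(\pSus(g) \sslash p)$, which is an equivalence whenever $g$ is a $p$-equivalence.

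For claim (ii), I would invoke the further adjunction $f_! \dashv f^*$. An object $Y \in \topos Y$ is $p$-complete if and only if $\Map{\topos Y}{g}{Y}$ is an equivalence for every $p$-equivalence $g$ in $\topos Y$; applied to $Y = f^* L_p X$ and combined with the adjunction, this gives $\Map{\topos Y}{g}{f^* L_p X} \cong \Map{\topos X}{f_! g}{L_p X}$, so it suffices to show that $f_!$ preserves $p$-equivalences. This follows by the same pattern as in (i), once one verifies the identification $\pSus f_! \cong f_{!,\Sp} \pSus$, where $f_{!,\Sp} \colon \Sp(\topos Y) \to \Sp(\topos X)$ is the induced left adjoint of $f^*_{\Sp}$. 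This identification is obtained by taking left adjoints of the equivalence $\Loop f^*_{\Sp} \cong f^* \Loop$, which in turn holds because $\Loop$ is evaluation at level zero and $f^*_{\Sp}$ is built levelwise from the finite-limit-preserving functor $f^*$.

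The main obstacle is this pointed/unpointed bookkeeping: $f_!$ does not preserve terminal objects, so the identification $\pSus f_! \cong f_{!,\Sp} \pSus$ cannot be proved by naively commuting $f_!$ past $(-)_+$. The cleanest route is to verify it on the right-adjoint side via $\Loop f^*_{\Sp} \cong f^* \Loop$ and then apply uniqueness of left adjoints, at which point everything assembles to give the desired equivalence $L_p f^* X \cong f^* L_p X$.
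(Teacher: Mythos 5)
Your proof is correct and takes essentially the same route as the paper: both show that $f^*$ preserves $p$-equivalences to get $f^*X \to f^*L_pX$ is a $p$-equivalence, and then deduce $p$-completeness of $f^*L_pX$ from the adjunction $f_! \dashv f^*$ together with the fact that $f_!$ preserves $p$-equivalences. The paper delegates both preservation statements to a single citation (Lemma 3.11 of the reference), whereas you unwind that lemma by passing to stabilizations and handling the non-trivial point (that $f_!$ does not preserve the terminal object, so $\pSus f_! \cong f_{!,\Sp}\pSus$ must be checked on right adjoints via $\Loop f^*_{\Sp} \cong f^*\Loop$) explicitly; this is exactly the content the citation is covering.
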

\begin{proof}
    There is a canonical map $f^* X \to f^* L_p X$,
    which is a $p$-equivalence by \cite[Lemma 3.11]{mattis2024unstable}.
    It therefore suffices to show that $f^* L_p X$ is $p$-complete.
    This follows formally since the further left adjoint $f_!$ preserves $p$-equivalences 
    by the same lemma.
\end{proof}

\begin{lem} \label{lem:htpydim:prod-p-compl-loc-high-con}
    Let $\topos X$ be an $\infty$-topos with a locally finite-dimensional cover $\Cat U \coloneqq \col{\topos U_i}{i \in I}$.
    Suppose that $X \in \topos X_*$ is a nilpotent sheaf.
    Then $(\tau_{\ge 1} \prod_{p \in P} L_p \tau_{\le n} X)_n$ is a nilpotent locally highly connected tower subordinate to $\Cat U$.
    The same is true for the tower $(L_p \tau_{\le n} X)_n$ for any prime $p$.
\end{lem}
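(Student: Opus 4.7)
I will split the statement into its two assertions --- nilpotence and local high connectivity --- and deduce each from what has already been established. For nilpotence, note that $\tau_{\le n} X$ is nilpotent and truncated by \cref{lem:localization:nilpotence-truncation}. Therefore \cref{lem:nilpotence-stability:nilpotent-product} directly gives that $\tau_{\ge 1} \prod_{p \in P} L_p \tau_{\le n} X$ is nilpotent, while the nilpotence of $L_p \tau_{\le n} X$ is the single-prime analogue contained in \cite[Section 3]{mattis2024unstable}.

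For the locally highly connected condition, fix $i \in I$. The functor $f_i^*$ commutes with truncations by \cref{lem:localization:truncation-cover-geom-morphism}, with all limits (hence with products) by \cref{lem:htpydim:jointly-conservative}, and with $p$-completion by \cref{lem:htpydim:truncated-nilpotent-geom-mor-p-comp}. Hence
\begin{equation*}
    f_i^*\bigl(\tau_{\ge 1} \prod_p L_p \tau_{\le n} X\bigr) \;\cong\; \tau_{\ge 1} \prod_p L_p \tau_{\le n} f_i^* X,
\end{equation*}
and analogously for the other tower. Because $f_i^*$ is exact and commutes with $\pi_k$, the pullback $f_i^* X$ is again a nilpotent pointed sheaf in $\topos U_i$. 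So I have reduced to showing high connectivity of the corresponding towers for a nilpotent object in the single topos $\topos U_i$, which has enough points.

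Since $\pi_k$ commutes with stalks, high connectivity can be tested on a conservative family of points, so the problem reduces to the case of a nilpotent pointed anima $Y$. Here the argument is classical Bousfield--Kan: the map $\tau_{\le n} Y \to Y$ induces isomorphisms on $\pi_j$ for $j \le n$, and for any connected nilpotent anima $Z$ and any prime $p$ one has a short exact sequence
\begin{equation*}
    0 \to \operatorname{Ext}(\Zpinf, \pi_k Z) \to \pi_k(L_p Z) \to \operatorname{Hom}(\Zpinf, \pi_{k-1} Z) \to 0
\end{equation*}
expressing $\pi_k(L_p Z)$ in terms of $\pi_k(Z)$ and $\pi_{k-1}(Z)$. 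Consequently, for $n \ge k+1$ the map $\pi_k(L_p \tau_{\le n} Y) \to \pi_k(L_p Y)$ is an isomorphism, and crucially this bound is independent of $p$. Since $\pi_k$ commutes with products and $\tau_{\ge 1}$ leaves $\pi_k$ for $k \ge 1$ unchanged, both towers stabilize at a common index $N_k = k+1$, proving high connectivity. This uniformity in $p$ is a reformulation of \cite[Theorem 3.27]{mattis2024unstable}.

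The only nontrivial point in this plan is the anima-level uniformity in $p$ needed to pass to the product; everything else is bookkeeping with the adjoint structure encoded in \cref{def:htpydim:cover}. Since this uniformity is already available from the previous paper, the proof reduces essentially to invoking the right combination of the preceding lemmas in order.
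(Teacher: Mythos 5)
Your plan starts out on the right track: the nilpotence claim, the reduction from $\topos X$ to a single $\topos U_i$ via $f_i^*$, and the exchange of $f_i^*$ with $\tau_{\le n}$, products, and $L_p$ (using the extra left adjoint $f_{!,i}$) all match what the paper does. The problem is the second reduction, from $\topos U_i$ to anima by passing to stalks; this step does not work and is where the paper's proof parts ways with yours.

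Stalk functors $s^* \colon \topos U_i \to \An$ are left exact left adjoints: they preserve finite limits and all colimits, but they do \emph{not} preserve infinite products or the sequential limit $\limil{k}(-)\sslash p^k$ that defines $L_p$. So for a sheaf $Z$, there is in general no equivalence $s^*\bigl(\prod_p L_p Z\bigr) \simeq \prod_p L_p (s^* Z)$. It is correct that $\pi_k$ commutes with $s^*$ (\cref{lem:localization:truncation-cover-geom-morphism}) and that an isomorphism of homotopy sheaves can be tested on a conservative family of points, but what you then compute is $\pi_k\bigl(s^*(\tau_{\ge 1}\prod_p L_p \tau_{\le n} f_i^* X)\bigr)$, which is \emph{not} of the form $\pi_k\bigl(\tau_{\ge 1}\prod_p L_p \tau_{\le n} Y\bigr)$ for the anima $Y = s^* f_i^* X$. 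Consequently the Bousfield--Kan short exact sequence, a fact about anima, simply does not apply to the object under consideration. (The fact that $L_p$ commutes with $f_i^*$ via \cref{lem:htpydim:truncated-nilpotent-geom-mor-p-comp} uses the further left adjoint $f_{!,i}$; a stalk point of $\topos U_i$ has no such left adjoint, so this lemma does not give the commutation you need.)

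This is precisely the circle that the finite homotopy dimension hypothesis is there to break. The paper never reduces to anima via stalks. Instead, after reducing to $\topos U_i$ of homotopy dimension $\le N$, it estimates the connectivity of the fibers of the transition maps by evaluating on generating objects $U$ of homotopy dimension $\le N$; evaluation $\Hom{}(U,-)$ is corepresented, hence commutes with \emph{all} limits including the product over $p$ and the sequential $p$-adic limit, at the cost of a controlled connectivity loss. Tracking the loss of $N$ from homotopy dimension plus one more from the Milnor sequence gives the bound $N_k = k + N + 3$. The fact that this bound genuinely depends on $N$ is a reliable warning sign that your uniform bound $N_k = k + 1$, which is independent of the topos, cannot be obtained by a naive reduction to anima. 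One would additionally need a sheaf-level Bousfield--Kan exact sequence (or at least a statement that $\pi_k L_p Z$ depends only on $\pi_k Z$ and $\pi_{k-1}Z$ internally in the topos), which is not among the cited results and is not obvious. Fixing your argument would require either proving such a sheaf-level exact sequence or replacing the reduction-to-anima step with the evaluation-on-generators argument the paper actually uses.
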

\begin{proof}
    Note that for a prime $p$ the sheaf $L_p \tau_{\le n} X$ is 
    connected as $\tau_{\le n} X$ is by assumption, and $p$-completion preserves connected objects, see \cite[Lemma 3.12]{mattis2024unstable}.
    Thus, the second statement is just a special case of the first, with $P = \{p\}$ 
    The sheaves $\tau_{\ge 1} \prod_p L_p \tau_{\le n} X$ are nilpotent by \cref{lem:nilpotence-stability:nilpotent-product}.
    Choose $i \in I$.
    We have to show that $(f^*_i \tau_{\ge 1} \prod_p L_p \tau_{\le n} X)_n$
    is a highly connected tower.
    We have 
    \begin{equation*}
        f^*_i \tau_{\ge 1} \prod_p L_p \tau_{\le n} X 
        \cong \tau_{\ge 1} \prod_p L_p \tau_{\le n} f^*_i X,
    \end{equation*}
    where we used \cref{lem:localization:truncation-cover-geom-morphism,lem:htpydim:jointly-conservative,lem:htpydim:truncated-nilpotent-geom-mor-p-comp}.
    We can therefore reduce to the case that $\topos X$ is locally of homotopy dimension $\le N$,
    and have to show that $(\tau_{\ge 1} \prod_p L_p \tau_{\le n} X)_n$ is highly connected.

    So let $k \ge 1$. We have to find an $N_k \ge 1$ such that 
    \begin{equation*}
        \pi_k(\tau_{\ge 1} \prod_p L_p \tau_{\le n} X) \cong \pi_{k}(\tau_{\ge 1} \prod_p L_p \tau_{\le N_k} X)
    \end{equation*}
    for all $n \ge N_k$.
    We claim that $N_k \coloneqq k + N + 3$ works.
    We prove the result by induction on $n \ge N_k$,
    the case $n = N_k$ being trivial.
    Since the sheaf $X$ is nilpotent by assumption,
    we can find a principal refinement of the Postnikov tower, see \cite[Lemma A.15]{mattis2024unstable}.
    We can therefore find an $m_n \ge 1$ and fiber sequences $X_{n, m} \to X_{n, m-1} \to K(A_m, n+1)$ for each $1 \le m \le m_n$
    such that $X_{n,0} = \tau_{\le n-1}X$, $X_{n,m_n} = \tau_{\le n}$, all the $X_{n, m}$ are $n$-truncated and $A_m$ abelian group onjects in $\Disc{\topos X}$.
    By induction on $m$, it therefore suffices to show that $\pi_k(\tau_{\ge 1} \prod_p L_p X_{n,m}) \cong \pi_{k}(\tau_{\ge 1} \prod_p L_p X_{n,m-1})$.
    We write $Z \coloneqq X_{n,m}$, $Y \coloneqq X_{m,m-1}$ and $K \coloneqq K(A_l, n+1)$,
    hence there is a fiber sequence $Z \to Y \to K$.
    We have 
    \begin{align*}
        \tau_{\ge 1} \prod_p L_p Z 
        &\cong \tau_{\ge 1} \prod_p L_p \Fib{Y \to K} \\
        &\cong \tau_{\ge 1} \prod_p \tau_{\ge 1} \Fib{L_p Y \to L_p K} \\
        &\cong \tau_{\ge 1} \prod_p \Fib{L_p Y \to L_p K} \\
        &\cong \tau_{\ge 1} \Fib{\prod_p L_p Y \to \prod_p L_p K} \\
        &\cong \tau_{\ge 1} \Fib{\tau_{\ge 1} \prod_p L_p Y \to \tau_{\ge 1} \prod_p L_p K},
    \end{align*}
    where we used \cite[Proposition 3.20]{mattis2024unstable} in the second equivalence,
    \cref{lem:nilpotence-stability:connective-cover-limit} in the third and last equivalences,
    and that limits commute with limits in the fourth equivalence.
    By the long exact sequence in homotopy, it therefore suffices to show that the connectivity of $\tau_{\ge 1} \prod_p L_p K$ is 
    at least $k + 2$. By \cite[Corollary 3.18]{mattis2024unstable} and \cref{lem:nilpotence-stability:connective-cover-limit} there is an equivalence 
    \begin{equation*}
        \tau_{\ge 1} \prod_p L_p K \cong \tau_{\ge 1} \prod_p \pLoop \limil{k} \Sigma^n HA \sslash p^k.
    \end{equation*}
    It therefore is enough to show that $\prod_p \pLoop \limil{k} \Sigma^n HA \sslash p^k$
    is $k + 2$-connective.
    It suffices to show that $(\prod_p \pLoop \limil{k} \Sigma^n HA \sslash p^k)(U)$
    is $k + 2$-connective for every $U$ of homotopy dimension $\le N$
    (as they generate the topos under colimits by definition).
    But we have 
    \begin{equation*}
        (\prod_p \pLoop \limil{k} \Sigma^n HA \sslash p^k)(U) \cong \prod_p \pLoop \limil{k} \Sigma^n ((HA)(U)) \sslash p^k,
    \end{equation*}
    as evaluation of sheaves commutes with limits and infinite loop spaces (note that by stability, $(-) \sslash p^k$ is the shift of a fiber).
    As $U$ is of homotopy dimension $\le N$, we see that $\Sigma^n (HA)(U)$ is $n-N$-connective (see e.g.\ \cite[Lemma 7.2.1.7]{highertopoi},
    applied to the overtopos $\topos X_{/U}$).
    Therefore, $\prod_p \pLoop \limil{k} \Sigma^n (HA)(U) \sslash p^k$
    is $n - N - 1$-connective: $(-) \sslash p^k$ is a colimit and therefore preserves connectivity.
    On the other hand, products of anima perserver connectivity, and sequential limits of anima 
    can lower the connectivity by one by the Milnor sequence.
    But as $n \ge N_k = k + N + 3$,
    we get that the connectivity of the anima in question is at least $(k + N + 3) - N - 1 = k + 2$.
    This proves the lemma.
\end{proof}

\begin{prop} \label{lem:htpydim:p-comp-of-post-tower}
    Let $\topos X$ be an $\infty$-topos with a locally finite-dimensional cover $\Cat U \coloneqq \col{\topos U_i}{i \in I}$,
    and $X \in \topos X_*$ be a nilpotent pointed sheaf.
    Then we have a canonical equivalence 
    $\prod_p L_p X \cong \limil{n} \prod_p L_p \tau_{\le n} X$.
\end{prop}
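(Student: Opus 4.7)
My plan is to construct the obvious comparison map from the Postnikov projections and then reduce to the analogous statement in a locally finite-dimensional $\infty$-topos, where the result is already available from \cite{mattis2024unstable}.

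Explicitly, the Postnikov projections $X \to \tau_{\le n} X$ induce a canonical map
\[
    \alpha \colon \prod_p L_p X \longrightarrow \limil{n} \prod_p L_p \tau_{\le n} X.
\]
To verify that $\alpha$ is an equivalence, I would invoke the conservativity of $\Cat S$ (\cref{lem:htpydim:jointly-conservative}) and check $s^* f_i^* \alpha$ for every $s^* f_i^* \in \Cat S$. The right-hand side is under control because, by \cref{lem:htpydim:prod-p-compl-loc-high-con}, the tower $(\prod_p L_p \tau_{\le n} X)_n$ is nilpotent locally highly connected subordinate to $\Cat U$, so \cref{lem:htpydim:locally-highly-connected-limit-stalk} allows commuting $s^* f_i^*$ past the limit. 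Using that $f_i^*$ preserves products (it has both adjoints $f_{!,i}$ and $f_{*,i}$), commutes with truncations (\cref{lem:localization:truncation-cover-geom-morphism}), and commutes with $p$-completion (\cref{lem:htpydim:truncated-nilpotent-geom-mor-p-comp}), the task reduces, on setting $Y := f_i^* X$, to producing an equivalence
\[
    s^* \prod_p L_p Y \;\cong\; \limil{n} s^* \prod_p L_p \tau_{\le n} Y
\]
in $\An$, where $\topos U_i$ is locally of homotopy dimension $\le n_i$ and has enough points, and $Y$ is nilpotent.

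To handle this, I would invoke \cite[Theorem 3.27]{mattis2024unstable} inside $\topos U_i$, which (combined with the trivial fact that $\prod_p$ and $\limil{n}$ commute as both are limits) supplies an equivalence $\prod_p L_p Y \cong \limil{n} \prod_p L_p \tau_{\le n} Y$ already in $\topos U_i$. Applying the stalk $s^*$ and using \cref{lem:htpydim:locally-highly-connected-limit-stalk} a second time, now for the trivial single-element cover of $\topos U_i$ by itself (under which \cref{lem:htpydim:prod-p-compl-loc-high-con} still yields a nilpotent locally highly connected tower), one may commute $s^*$ past the outer limit to match the stalk computation of the right-hand side of $\alpha$.

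The main obstacle will be confirming that \cite[Theorem 3.27]{mattis2024unstable} can be applied inside $\topos U_i$ in the exact form required. Beyond this, the proof is essentially bookkeeping: tracking that each $f_i^*$ commutes with the relevant products, truncations, and localizations, and applying the high-connectivity machinery of \cref{lem:htpydim:prod-p-compl-loc-high-con} and \cref{lem:htpydim:locally-highly-connected-limit-stalk} at two nested levels to bring stalks past limits.
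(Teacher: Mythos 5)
Your route is genuinely different from the paper's: you want to reduce the statement, via descent along the cover $\Cat U$, to a black-box application of \cite[Theorem~3.27]{mattis2024unstable} inside each $\topos U_i$, whereas the paper reproves the result from scratch by reducing to a single prime $p$, observing that $\limil{n} L_p \tau_{\le n}X$ is $p$-complete, and then verifying that $\limil{n}\tau_{\le n}X \to \limil{n}L_p\tau_{\le n}X$ is a $p$-equivalence on stalks via \cref{lem:htpydim:locally-highly-connected-limit-stalk} and a Milnor-sequence argument (\cite[Lemma~A.31]{mattis2024unstable}).

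As written, your proposal has a concrete gap, and it stems from refusing to reduce to a single prime before taking limits. You invoke \cref{lem:htpydim:prod-p-compl-loc-high-con} to assert that $(\prod_p L_p \tau_{\le n} X)_n$ is a locally highly connected tower, but that lemma only supplies this for $(\tau_{\ge 1}\prod_p L_p \tau_{\le n}X)_n$ and for the single-prime towers $(L_p\tau_{\le n}X)_n$; the definition of a (locally) highly connected tower requires the terms to be \emph{connected}, and an infinite product of connected objects in an arbitrary $\infty$-topos need not be connected, which is exactly why the $\tau_{\ge 1}$ appears throughout the paper. Consequently \cref{lem:htpydim:locally-highly-connected-limit-stalk} does not apply to the tower you need, so you cannot push $s^*f_i^*$ past $\limil{n}$ on the right-hand side of $\alpha$ as described. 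A related issue: a point $s^*$ is only a left exact left adjoint and will not commute with the infinite product $\prod_p$ in general, so the final "apply the stalk $s^*$ and commute it past the outer limit" step would still leave an uncontrolled $\prod_p$ sitting under $s^*$.

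The good news is that you did not need stalks at all. Once you know that $f_i^*$ commutes with $\tau_{\le n}$ (\cref{lem:localization:truncation-cover-geom-morphism}), with $L_p$ (\cref{lem:htpydim:truncated-nilpotent-geom-mor-p-comp}), and with all limits including $\prod_p$ and $\limil{n}$ (\cref{lem:htpydim:jointly-conservative}), you may identify $f_i^*\alpha$ with the comparison map for $Y = f_i^*X$ inside $\topos U_i$; since $\topos U_i$ has enough points and is locally of finite homotopy dimension and $Y$ is nilpotent, \cite[Theorem~3.27]{mattis2024unstable} (for each single $p$, then pull $\prod_p$ through the $\N$-limit) shows $f_i^*\alpha$ is an equivalence, and the joint conservativity of the $f_i^*$ from \cref{lem:htpydim:jointly-conservative} finishes. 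This cleaner version of your idea works (modulo the caveat you already flagged about the precise form of Theorem~3.27) and avoids both pitfalls above; it is a legitimately shorter alternative to the paper's direct argument, at the cost of outsourcing the analytic content to the cited theorem rather than re-deriving it in the locally-covered setting.
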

\begin{proof}
    By \cref{lem:htpydim:postnikov-complete}, $\topos X$ is Postnikov-complete,
    and we thus have to show that $\prod_p L_p \limil{n} \tau_{\le n} X \cong \limil{n} \prod_p L_p \tau_{\le n} X$.
    Since products commute with limits, it suffices to prove the result for a single $p \in P$,
    i.e.\ we have to prove that $L_p \limil{n} \tau_{\le n} X \cong \limil{n} L_p \tau_{\le n} X$.
    Since the right hand side is $p$-complete as a limit of $p$-complete objects,
    it therefore suffices to prove that the canonical map $\limil{n} \tau_{\le n} X \to \limil{n} L_p \tau_{\le n} X$
    is a $p$-equivalence.
    This can be checked on the conservative family of points $\Cat S$, see \cite[Lemma 3.11]{mattis2024unstable}.
    Let $s^* \in \Cat S$, i.e.\ $s^*$ is a point of $\topos X$ that factors through some $f_i^* \colon \topos X \to \topos U_i$.
    By \cref{lem:htpydim:locally-highly-connected-limit-stalk} we have equivalences
    \begin{equation*}
        s^* \limil{n} \tau_{\le n} X \cong \limil{n} s^* \tau_{\le n} X
    \end{equation*}
    and 
    \begin{equation*}
        s^* \limil{n} L_p \tau_{\le n} X \cong \limil{n} s^* L_p \tau_{\le n} X,
    \end{equation*}
    as both $(\tau_{\le n} X)_n$ and $(L_p \tau_{\le n} X)_n$ are locally highly connected subordinate to $\Cat U$, by \cref{lem:htpydim:highly-connected-post-tower,lem:htpydim:prod-p-compl-loc-high-con}, respectively.
    By definition and \cite[Lemma 3.11]{mattis2024unstable}, for each $n$ the map $s^* \tau_{\le n} X \to s^* L_p \tau_{\le n} X$ is a $p$-equivalence of anima.
    Thus, also $\limil{n} s^* \tau_{\le n} X \to \limil{n} s^* L_p \tau_{\le n} X$ is a $p$-equivalence by \cite[Lemma A.31]{mattis2024unstable}.
    This proves the proposition.
\end{proof}

\section{Stable Arithmetic Fracture Squares}
Let $\Cat D$ be a stable presentable $\infty$-category.
In this section, we will prove the stable analog of the main theorem, \cref{lem:fracture-square:main-thm}.

\begin{prop} \label{lem:stable:abstract-fracture-square}
    Suppose that there is a small set $I$,
    and for every $i \in I$ an exact localization functor $L_i \colon \Cat D \to \Cat D$.
    Let $L' \colon \Cat D \to \Cat D$ be another exact localization functor.
    In particular, there are natural transformations 
    $\alpha_i \colon \id{\Cat D} \to L_i$ and $\alpha' \colon \id{\Cat D} \to L'$.
    Suppose further that we have the following compatibilities:
    \begin{enumerate}[label=(\alph*)]
        \item \label{cond:fracture-system:comp-0} $L_j L' \cong 0$ for all $j \in I$.
        \item \label{cond:fracture-system:comp-Li-Lj} $L_j \prod_i \alpha_i \colon L_j \to L_j \prod_i L_i$ is an equivalence for all $j \in I$. 
        \item \label{cond:fracture-system:jointly-cons} $\{L'\} \cup \set{L_j}{j \in I}$ is a conservative family of functors.
    \end{enumerate}
    There is a cartesian square of functors 
    \begin{center}
        \begin{tikzcd}
            \id{\Cat D} \arrow[r, "\prod_i \alpha_i"] \arrow[d, "\alpha'"] &\prod_i L_i \arrow[d, "\alpha' \prod_i L_i"] \\
            L' \arrow[r, "L' \prod_i \alpha_i"] &L' \prod_i L_i.
        \end{tikzcd}
    \end{center}
\end{prop}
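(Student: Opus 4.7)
The plan is to exploit stability: in the stable presentable $\infty$-category $\Cat D$, a square is cartesian if and only if its total fiber is zero. Since the localizations $L_j$ and $L'$ are assumed exact, they preserve finite limits and colimits, and in particular commute with the formation of total fibers of squares in $\Cat D$. Combined with hypothesis \ref{cond:fracture-system:jointly-cons}, detecting vanishing of the total fiber reduces to checking that $L'$ and each $L_j$ sends our square to a cartesian square. Thus the entire proof boils down to two observations.

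First, I would apply $L'$ to the square. The vertical transformations become $L' \alpha'$ and $L'(\alpha' \prod_i L_i)$. A standard fact about Bousfield localizations is that the unit becomes an equivalence upon applying the localization; i.e.\ $L' \alpha'_X$ is an equivalence for every $X$, since $L' L' \cong L'$. Both verticals are therefore equivalences, so the image of the square under $L'$ is trivially cartesian.

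Second, I would apply $L_j$ to the square for a fixed $j \in I$. By hypothesis \ref{cond:fracture-system:comp-0}, both bottom entries $L_j L'$ and $L_j L' \prod_i L_i$ are zero, so the bottom row is $0 \to 0$. By hypothesis \ref{cond:fracture-system:comp-Li-Lj}, the top row $L_j \to L_j \prod_i L_i$ is an equivalence. A square with bottom row $0 \to 0$ and invertible top row is manifestly cartesian (the pullback of $0 \to 0$ along any map is the identity). Combining these two cases with the conservativity hypothesis \ref{cond:fracture-system:jointly-cons} and the total-fiber criterion completes the proof.

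The main point to verify carefully is the preservation of the total fiber by exact functors, together with the fact that in a stable category ``cartesian'' is a property that can be detected by an exact, conservative family of endofunctors. Once this general principle is in hand, no obstacle remains: both case-checks are essentially formal rewritings of hypotheses (a), (b), and the defining property of a localization unit.
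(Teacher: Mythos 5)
Your proof is correct and follows essentially the same strategy as the paper: reduce to checking the square after applying $L'$ and each $L_j$ (using that exact functors preserve pullbacks and that the family is conservative), then dispose of the $L'$-case using that $L'\alpha'$ is invertible, and of the $L_j$-case using hypotheses (a) and (b). The paper phrases the reduction by forming the comparison map $\id{\Cat D} \to G$ to the pullback $G$ and checking it becomes an equivalence, while you phrase it via vanishing of the total fiber; in a stable $\infty$-category these are equivalent reformulations of the same argument, so the two proofs differ only in bookkeeping.
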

\begin{proof}
    Note that the square is clearly commutative.
    Let $G$ be the pullback of the span $L' \to L' \prod_i L_i \leftarrow \prod_i L_i$.
    There is a canonical map $f \colon \id{\Cat D} \to G$.
    We have to show that $f$ is an equivalence.
    Using \ref{cond:fracture-system:jointly-cons},
    it suffices to show that $L'f$ and the $L_j f$ are equivalences.

    We first show that $L'f$ is an equivalence.
    Since $L'$ is exact, applying it yields the following diagram where the square is cartesian:
    \begin{center}
        \begin{tikzcd}
            L' \arrow[rrrd, bend left, "L' \prod_j \alpha_j"] \arrow[ddr, bend right, "L' \alpha'"'] \arrow[rd, "L'f"] &&&\\
            &L' G \arrow[rr, "pr_1"] \arrow[d, "pr_2"'] &&L' \prod_i L_i \arrow[d, "L' \alpha' \prod_i L_i"] \\
            &L'L' \arrow[rr, "L'L' \prod_i \alpha_i"] &&L'L' \prod_i L_i.
        \end{tikzcd}
    \end{center}
    Since $L'$ is a localization functor, $L' \alpha'$ is an equivalence.
    Thus, in particular, the right vertical morphism is an equivalence.
    Since the inner square is cartesian, we see that also $pr_2$ is an equivalence.
    Hence, we conclude that $L'f$ is an equivalence.

    Now fix $j \in I$. We argue similarly that $L_j f$ is an equivalence:
    First, apply $L_j$ to get the following diagram, where again the inner square is cartesian:
    \begin{center}
        \begin{tikzcd}
            L_j \arrow[rrrd, bend left, "L_j \prod_i \alpha_i"] \arrow[ddr, bend right, "L_j \alpha'"'] \arrow[rd, "L_jf"] &&&\\
            &L_j G \arrow[rr, "pr_1"] \arrow[d, "pr_2"'] &&L_j \prod_i L_i \arrow[d, "L_j \alpha' \prod_i L_i"] \\
            &L_jL' \arrow[rr, "L_jL' \prod_i \alpha_i"] &&L_jL' \prod_i L_i.
        \end{tikzcd}
    \end{center}
    We see from \ref{cond:fracture-system:comp-0} that the bottom corners in the diagram are both $0$.
    In particular, they are equivalent, and therefore $pr_1$ is also an equivalence.
    The morphism $L_j \prod_i \alpha_i$ is an equivalence by \ref{cond:fracture-system:comp-Li-Lj}.
    Therefore we conclude that $L_j f$ is an equivalence.
    This proves the proposition.
\end{proof}

The stable $p$-completion functor $L_p(-) \coloneqq \limil{n} (-) \sslash p^n \colon \Cat D \to \Cat D$
is the universal functor which inverts $p$-equivalences (i.e.\ maps $f \colon X \to Y$ such that $f \sslash p$ is an equivalence),
see e.g.\ \cite[Section 2]{mattis2024unstable}.
This is an exact localization functor, write $\alpha_p \colon \id{\Cat D} \to L_p$ for the unit of the associated adjunction.
We will also write $\alpha_R \colon \id{\Cat D} \to L_R$ for the unit of the associated adjunction of the exact localization functor $L_R \colon \Cat D \to \Cat D$.

\begin{prop} \label{lem:stable:arithmetic-fracture-square}
    There is a cartesian square of functors 
    \begin{center}
        \begin{tikzcd}
            \id{\Cat D} \arrow[r, "\prod_p \alpha_p"] \arrow[d, "\alpha_R"] &\prod_p L_p \arrow[d, "\alpha_R \prod_p L_p"] \\
            L_R \arrow[r, "L_R \prod_p \alpha_p"] &L_R \prod_p L_p.
        \end{tikzcd}
    \end{center}
\end{prop}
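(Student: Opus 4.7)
My plan is to invoke the abstract fracture proposition \cref{lem:stable:abstract-fracture-square} with $I = P$, $L_i \coloneqq L_p$ for each $p \in P$, and $L' \coloneqq L_R$. If $P = \emptyset$ the statement is trivial, since then $L_R = \id{\Cat D}$ and the empty product is zero, so I henceforth assume $P \neq \emptyset$ and concentrate on verifying the three compatibility conditions.

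Conditions \ref{cond:fracture-system:comp-0} and \ref{cond:fracture-system:comp-Li-Lj} I expect to be essentially formal. For \ref{cond:fracture-system:comp-0}, $L_R$ inverts multiplication by every $p \in P$ by construction, so $L_R X \sslash p = 0$, which forces $L_p L_R X = \limil{n} L_R X \sslash p^n = 0$. For \ref{cond:fracture-system:comp-Li-Lj}, fix $p \in P$. In a stable category the cofiber $(-) \sslash p^n$ is (up to shift) a fiber, hence commutes with all limits, and the same is then true of $L_p = \limil{n}(-) \sslash p^n$; in particular $L_p \prod_q L_q X \cong \prod_q L_p L_q X$. The $p$-factor is $L_p L_p X \cong L_p X$ by idempotence, while the $q$-factors for $q \neq p$ vanish because $L_q X$ is $q$-complete and multiplication by a coprime prime is invertible on any $q$-complete object; this last fact is immediate in $\Sp$ from the fact that $\pi_*(E\sslash q^n)$ is $q^n$-torsion, and it transfers to $\Cat D$ via $\Map{\Cat D}{Z}{L_q X} \cong L_q \Map{\Cat D}{Z}{X}$ together with Yoneda. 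With both identifications in place the natural transformation $L_p \to L_p \prod_q L_q$ becomes the identity in the $p$-component, hence an equivalence.

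The main obstacle is condition \ref{cond:fracture-system:jointly-cons}, joint conservativity of $\{L_R\} \cup \{L_p\}_{p \in P}$ on $\Cat D$. The plan is to reduce to the classical statement for $\Sp$, which itself follows from \cref{lem:stable:R-local-on-htpy}: for $E \in \Sp$ with $L_R E = 0$ and $L_p E = 0$ for all $p \in P$, each homotopy group $\pi_n E$ is $R$-torsion with vanishing $p$-completion for every $p \in P$, which forces $\pi_n E = 0$ by an elementary argument on torsion abelian groups. To transfer this to general presentable stable $\Cat D$, I would fix a set of compact generators and use that for compact $Y$ the functor $\Map{\Cat D}{Y}{-} \colon \Cat D \to \Sp$ is exact, preserves all limits, and preserves filtered colimits; combined with \cref{lem:stable:R-loc-as-colim} and the defining sequential-limit formula for $L_p$, this yields $\Map{\Cat D}{Y}{L_R X} \cong L_R \Map{\Cat D}{Y}{X}$ and $\Map{\Cat D}{Y}{L_p X} \cong L_p \Map{\Cat D}{Y}{X}$. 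Vanishing of $L_R X$ and of all $L_p X$ in $\Cat D$ then yields vanishing of the corresponding localizations of $\Map{\Cat D}{Y}{X}$ in $\Sp$ for every compact $Y$, which by the spectral case forces $\Map{\Cat D}{Y}{X} = 0$ for all compact generators $Y$, and thus $X = 0$.
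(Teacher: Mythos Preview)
Your overall strategy---invoke \cref{lem:stable:abstract-fracture-square} and verify the three conditions---is exactly the paper's, and your treatments of \ref{cond:fracture-system:comp-0} and \ref{cond:fracture-system:comp-Li-Lj} are minor rephrasings of the paper's arguments (the paper works mod $\ell$ rather than commuting $L_\ell$ past the product, but the content is the same: $\ell$ is invertible on $L_p$ for $p \neq \ell$, and $\alpha_\ell \sslash \ell$ is an equivalence).

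The gap is in \ref{cond:fracture-system:jointly-cons}. Your reduction to $\Sp$ relies on ``fix a set of compact generators'' so that $\Map{\Cat D}{Y}{-}$ preserves the sequential colimit computing $L_R$ via \cref{lem:stable:R-loc-as-colim}. But $\Cat D$ is only assumed presentable stable, not compactly generated; in general you only have a generating set of $\kappa$-compact objects for some possibly large $\kappa$, and such objects need not commute with $\N$-indexed colimits. So the isomorphism $\Map{\Cat D}{Y}{L_R X} \cong L_R \Map{\Cat D}{Y}{X}$ is not available, and the transfer to $\Sp$ breaks down. (Your sketch of the $\Sp$ case is also a bit imprecise: $L_p E = 0$ does not directly say anything about the $p$-completion of $\pi_n E$; what it gives is $E \sslash p \cong (L_p E)\sslash p = 0$, i.e.\ $p$ acts invertibly on $E$.)

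The paper's argument for \ref{cond:fracture-system:jointly-cons} avoids all of this and stays entirely inside $\Cat D$: if $L_\ell X = 0$ for every $\ell \in P$, then $X \sslash \ell \cong (L_\ell X)\sslash \ell = 0$, so multiplication by $\ell$ is invertible on $X$ for every $\ell \in P$; by \cref{lem:stable:R-local-iff-p-invertible} this means $X$ is $R$-local, hence $X \cong L_R X = 0$. No generators, no mapping spectra, no reduction to $\Sp$.
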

\begin{proof}
    We want to apply \cref{lem:stable:abstract-fracture-square}
    with $I = P$, $L_i = L_p$ and $L' = L_R$.
    Thus, we have to show that $L_\ell L_R \cong 0$ for every prime $\ell \in P$,
    that $L_\ell \prod_p \alpha_p$ is an equivalence for every prime $\ell \in P$,
    and that the $L_\ell$ and $L_R$ are jointly conservative.

    We first show that $L_\ell \prod_p \alpha_p$ is an equivalence for every prime $\ell \in P$.
    So fix a prime $\ell$.
    Using \cite[Lemma 2.6]{mattis2024unstable}, it suffices to show that $\prod_p \alpha_p$ is an $\ell$-equivalence,
    i.e.\ that $(\prod_p \alpha_p) \sslash \ell \colon \id{\Cat D} \sslash \ell \to (\prod_p L_p) \sslash \ell \cong \prod_p (L_p \sslash \ell)$ is an equivalence.
    Note that $L_p \sslash \ell \cong 0$ for every prime $p \neq \ell$ since 
    $\ell$ is invertible on $L_p$.
    Therefore, $(\prod_p \alpha_p) \sslash \ell \cong \alpha_\ell \sslash \ell$,
    which is an equivalence by definition.

    We now show that $L_\ell L_R \cong 0$ for every prime $\ell \in P$.
    By \cref{lem:stable:R-local-iff-p-invertible}, multiplication by $\ell$ is an equivalence on $L_R$ for all $\ell \in P$.
    In particular, we get that $L_\ell L_R \cong 0$ from the description of $L_\ell$ as 
    the $\ell$-adic limit $\limil{k} (-) \sslash \ell^k$, see e.g.\ \cite[Lemma 2.5]{mattis2024unstable}.

    We are left to show that the functors $L_\ell$ and $L_R$ are jointly conservative.
    By stability, it is enough to show that if an object $X$ is $L_\ell$-acyclic for every $\ell \in P$ and $L_R$-acyclic,
    then $X \cong 0$.
    So suppose that $L_R X \cong 0 \cong L_\ell X$ for every $\ell \in P$.
    We deduce that $X \sslash \ell \cong (L_\ell X) \sslash \ell \cong 0$,
    i.e.\ multiplication by $\ell$ is invertible on $X$ for every $\ell \in P$.
    In particular, $X$ is $R$-local, by \cref{lem:stable:R-local-iff-p-invertible}.
    Therefore, $X \cong L_R X \cong 0$.
    This proves the proposition.
\end{proof}

\begin{cor} \label{lem:stable:arithmetic-fracture-square-applied-to-X}
    For every $X \in \Cat D$ the canonical square 
    \begin{center}
        \begin{tikzcd}
            X \arrow[r] \arrow[d] &\prod_p L_p X \arrow[d] \\
            L_R X \arrow[r] &L_R \prod_p L_p X
        \end{tikzcd}
    \end{center}
    is cartesian.
\end{cor}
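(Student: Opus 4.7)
The plan is to deduce this directly from \cref{lem:stable:arithmetic-fracture-square} by evaluating the cartesian square of functors at the object $X$.

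More precisely, I would recall that limits (and in particular pullbacks) in a functor category $\Fun{}(\Cat D, \Cat D)$ are computed objectwise. Hence, since the square of functors
\begin{center}
    \begin{tikzcd}
        \id{\Cat D} \arrow[r, "\prod_p \alpha_p"] \arrow[d, "\alpha_R"] &\prod_p L_p \arrow[d, "\alpha_R \prod_p L_p"] \\
        L_R \arrow[r, "L_R \prod_p \alpha_p"] &L_R \prod_p L_p
    \end{tikzcd}
\end{center}
is cartesian by \cref{lem:stable:arithmetic-fracture-square}, evaluating at any object $X \in \Cat D$ yields a cartesian square in $\Cat D$. This evaluated square is precisely the one in the statement of the corollary, so the claim follows.

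There is no obstacle here: the only subtlety is the general fact that limits in functor $\infty$-categories are pointwise, which is standard (and in particular holds for the pullback in question). The proof is thus essentially a one-line specialization of the preceding proposition.
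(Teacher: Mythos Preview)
Your proposal is correct and is exactly the intended argument: the paper states this as an immediate corollary of \cref{lem:stable:arithmetic-fracture-square} without further proof, and the only content is the pointwise computation of limits in $\Fun{}(\Cat D, \Cat D)$ that you spell out.
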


We also need a version of the above corollary where we take $n$-connective covers of the $p$-completions.
For this, we need the following very general lemma:
\begin{lem} \label{lem:stable:cartesian-square-connective-cover}
    Suppose that $\Cat D$ is equipped with a t-structure $\tstruct{\Cat D}$,
    and that there is a cartesian square 
    \begin{center}
        \begin{tikzcd}
            X \arrow[d] \arrow[r] &Y \arrow[d] \\
            X' \arrow[r] &Y'.
        \end{tikzcd}
    \end{center}
    Suppose moreover that $X \in \tcon[n]{\Cat D}$ for some $n$.
    Then also the square 
    \begin{center}
        \begin{tikzcd}
            X \arrow[d] \arrow[r] &\tau_{\ge n} Y \arrow[d] \\
            \tau_{\ge n} X' \arrow[r] &\tau_{\ge n} Y'
        \end{tikzcd}
    \end{center}
    is cartesian.
\end{lem}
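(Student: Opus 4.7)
The plan is to exploit a standard fact about t-structures on stable $\infty$-categories: the truncation functor $\tau_{\ge n} \colon \Cat D \to \tcon[n]{\Cat D}$ is right adjoint to the inclusion $\tcon[n]{\Cat D} \hookrightarrow \Cat D$ (see \cite[Proposition 1.2.1.5]{higheralgebra}), and therefore preserves all small limits, in particular pullbacks.

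First I would apply $\tau_{\ge n}$ to the given cartesian square. Since $\tau_{\ge n}$ preserves pullbacks, we obtain a cartesian square
\begin{center}
    \begin{tikzcd}
        \tau_{\ge n} X \arrow[d] \arrow[r] &\tau_{\ge n} Y \arrow[d] \\
        \tau_{\ge n} X' \arrow[r] &\tau_{\ge n} Y'.
    \end{tikzcd}
\end{center}
Next, since $X \in \tcon[n]{\Cat D}$ by hypothesis, the counit $\epsilon_X \colon \tau_{\ge n} X \to X$ is an equivalence. Replacing $\tau_{\ge n} X$ with $X$ along $\epsilon_X$ yields the desired cartesian square; the resulting maps $X \to \tau_{\ge n} Y$ and $X \to \tau_{\ge n} X'$ are precisely the canonical lifts of the original maps $X \to Y$ and $X \to X'$ across the counits $\tau_{\ge n} Y \to Y$ and $\tau_{\ge n} X' \to X'$, which exist uniquely by the adjunction since $X$ is $n$-connective.

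There is essentially no obstacle here; the argument reduces to two standard inputs: $\tau_{\ge n}$ is a right adjoint (hence preserves limits), and its counit is invertible on objects that already lie in $\tcon[n]{\Cat D}$. No further assumptions on $\Cat D$ beyond stability and the existence of the t-structure enter the proof.
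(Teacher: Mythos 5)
Your argument has a genuine gap at the step ``since $\tau_{\ge n}$ preserves pullbacks, we obtain a cartesian square.'' The functor $\tau_{\ge n}\colon\Cat D\to\tcon[n]{\Cat D}$ is indeed a right adjoint and preserves limits, but this only tells you that the resulting square is cartesian \emph{in $\tcon[n]{\Cat D}$}. The lemma asserts it is cartesian \emph{in $\Cat D$}, and these are different statements: the full subcategory $\tcon[n]{\Cat D}$ is not closed under finite limits in $\Cat D$ (for instance, if $B\in\tcon[n]{\Cat D}$ then $\operatorname{fib}(0\to B)=\Omega B$ need only be $(n-1)$-connective), so the pullback in $\tcon[n]{\Cat D}$ is $\tau_{\ge n}$ applied to the pullback computed in $\Cat D$. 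Concretely, writing $F\coloneqq \tau_{\ge n}X'\times_{\tau_{\ge n}Y'}\tau_{\ge n}Y$ for the pullback in $\Cat D$, your argument establishes $X\cong\tau_{\ge n}X\cong\tau_{\ge n}F$, whereas what must be shown is $X\cong F$. This requires proving that $F$ is itself $n$-connective, which is not automatic and is precisely where the hypothesis on $X$ does its real work (in your write-up it is used only to identify $\tau_{\ge n}X$ with $X$, which is not enough).

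The missing ingredient can be supplied: from the long exact sequence of $F=\operatorname{fib}(\tau_{\ge n}X'\oplus\tau_{\ge n}Y\to\tau_{\ge n}Y')$ one reduces to the surjectivity of $\pi_n(X')\oplus\pi_n(Y)\to\pi_n(Y')$, and this follows from $\pi_{n-1}(X)=0$ via the long exact sequence of the original fiber sequence $X\to X'\oplus Y\to Y'$. The paper packages exactly this into a $3\times 3$ diagram of (co)fiber sequences, reducing the claim to the observation that $\tau_{\le n-1}(X'\oplus Y)\to\tau_{\le n-1}Y'$ is an equivalence because its fiber $X$ has vanishing $\tau_{\le n-1}$. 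So the intended result is correct, and your opening move is a reasonable one, but as written the proof asserts rather than proves the key point and therefore does not go through.
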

\begin{proof}
    Write $F$ for the limit of the span $\tau_{\ge n} X' \to \tau_{\ge n} Y' \leftarrow \tau_{\ge n} Y$.
    By stability, we know that there is a commutative diagram 
    \begin{center}
        \begin{tikzcd}
            F \arrow[d] \arrow[r] &\tau_{\ge n} (X' \oplus Y) \arrow[r]\arrow[d] &\tau_{\ge n} Y' \arrow[d] \\
            X \arrow[d] \arrow[r] &X' \oplus Y \arrow[d] \arrow[r] &Y' \arrow[d] \\
            \Cofib{F \to X} \arrow[r] &\tau_{\le n-1}(X' \oplus Y) \arrow[r] &\tau_{\le n-1} Y',
        \end{tikzcd}
    \end{center}
    where the rows and columns are co/fiber sequences. 
    To see that the map $F \to X$ is an equivalence, it therefore suffices to show that 
    \begin{equation*}
        \tau_{\le n-1} (X' \oplus Y) \to \tau_{\le n-1} Y'
    \end{equation*}
    is an equivalence. This follows from the assumption that $X \cong \Fib{X' \oplus Y \to Y'}$ is $n$-connective.
\end{proof}

\begin{cor} \label{lem:stable:arithmetic-fracture-square-connective-cover}
    Suppose that $\Cat D$ is equipped with a t-structure $\tstruct{\Cat D}$,
    which is accessible, separated and compatible with filtered colimits.
    Suppose that $X \in \tcon[n]{\Cat D}$.
    Then there is a canonical square 
    \begin{center}
        \begin{tikzcd}
            X \arrow[r] \arrow[d] &\tau_{\ge n} \prod_p L_p X \arrow[d] \\
            L_R X \arrow[r] &\tau_{\ge n} L_R \prod_p L_p X
        \end{tikzcd}
    \end{center}
    which is cartesian.
\end{cor}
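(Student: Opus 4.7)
The plan is to reduce this to the already-established cartesian square from \cref{lem:stable:arithmetic-fracture-square-applied-to-X} by applying the connective-cover manipulation of \cref{lem:stable:cartesian-square-connective-cover}.

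First, I would invoke \cref{lem:stable:arithmetic-fracture-square-applied-to-X} to obtain the cartesian square
\begin{center}
    \begin{tikzcd}
        X \arrow[r] \arrow[d] &\prod_p L_p X \arrow[d] \\
        L_R X \arrow[r] &L_R \prod_p L_p X.
    \end{tikzcd}
\end{center}
Since $X \in \tcon[n]{\Cat D}$ by hypothesis, \cref{lem:stable:cartesian-square-connective-cover} applies directly and produces a cartesian square whose three non-$X$ corners are replaced by their $n$-connective covers:
\begin{center}
    \begin{tikzcd}
        X \arrow[r] \arrow[d] &\tau_{\ge n} \prod_p L_p X \arrow[d] \\
        \tau_{\ge n} L_R X \arrow[r] &\tau_{\ge n} L_R \prod_p L_p X.
    \end{tikzcd}
\end{center}

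The remaining task is to identify $\tau_{\ge n} L_R X$ with $L_R X$, i.e.\ to show that $L_R X$ is itself $n$-connective. Here I would use \cref{lem:stable:R-local-on-htpy} together with \cref{lem:stable:R-loc-commutes-with-seq-colim-pres-functors}: since the t-structure is accessible, separated, and compatible with filtered colimits, we have $\pi_k(L_R X) \cong L_R \pi_k(X)$ for all $k$, and for $k < n$ we have $\pi_k(X) = 0$, so $L_R \pi_k(X) = 0$. Thus $L_R X \in \tcon[n]{\Cat D}$, and the canonical map $\tau_{\ge n} L_R X \to L_R X$ is an equivalence, yielding the required cartesian square.

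I do not expect any serious obstacle: the two ingredients (the uncovered fracture square and the connective-cover lemma) fit together mechanically, and the only verification needed is the $n$-connectivity of $L_R X$, which is immediate from the homotopy-group description of $L_R$ on the heart. The only subtlety worth flagging is the use of the hypothesis that the t-structure is accessible, compatible with filtered colimits, and separated; this is exactly what is required to invoke \cref{lem:stable:R-local-on-htpy}, and it is already built into the statement of the corollary.
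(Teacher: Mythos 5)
Your proposal is correct and takes essentially the same approach as the paper: start from the cartesian fracture square of \cref{lem:stable:arithmetic-fracture-square-applied-to-X}, apply \cref{lem:stable:cartesian-square-connective-cover}, and identify $\tau_{\ge n} L_R X \cong L_R X$ via the $n$-connectivity of $L_R X$ from \cref{lem:stable:R-local-on-htpy}. The only cosmetic difference is that you spell out the intermediate square and the connectivity check, which the paper compresses into a single sentence.
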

\begin{proof}
    We get a canonical cartesian square 
    \begin{center}
        \begin{tikzcd}
            X \arrow[r] \arrow[d] &\prod_p L_p X \arrow[d] \\
            L_R X \arrow[r] &L_R \prod_p L_p X
        \end{tikzcd}
    \end{center}
    from \cref{lem:stable:arithmetic-fracture-square-applied-to-X}.
    Note that since $X$ is $n$-connective by assumption,
    and $L_R X$ is $n$-connective by \cref{lem:stable:R-local-on-htpy},
    it follows from \cref{lem:stable:cartesian-square-connective-cover} that
    \begin{center}
        \begin{tikzcd}
            X \arrow[r] \arrow[d] &\tau_{\ge n} \prod_p L_p X \arrow[d] \\
            L_R X \arrow[r] &\tau_{\ge n} L_R \prod_p L_p X
        \end{tikzcd}
    \end{center}
    is cartesian.
\end{proof}

\section{Unstable Arithmetic Fracture Squares}
Let $\topos X$ be an $\infty$-topos with enough points.
In this section, we will prove the main theorem.
We need the following simple lemma:
\begin{lem} \label{lem:fracture-square:cartesian-fiber}
    Let
    \begin{center}
        \begin{tikzcd}
            X \arrow[r] \arrow[d] &Y \arrow[r] \arrow[d] &Z \arrow[d, "h"] \\
            X' \arrow[r] &Y' \arrow[r] &Z'
        \end{tikzcd}
    \end{center}
    be a commutative diagram in a pointed $\infty$-category $\Cat C$.
    If the rows are fiber sequences and $h$ is an equivalence, then 
    the left square is cartesian.
\end{lem}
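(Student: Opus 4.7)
The plan is to reduce the claim to the pasting lemma for pullbacks via the standard fact that a fiber sequence $A \to B \to C$ in a pointed $\infty$-category is precisely the datum of a cartesian square
\begin{center}
    \begin{tikzcd}
        A \arrow[r] \arrow[d] & B \arrow[d] \\
        * \arrow[r] & C.
    \end{tikzcd}
\end{center}

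Concretely, I would paste the given diagram on top of the cartesian square associated to the bottom fiber sequence, obtaining
\begin{center}
    \begin{tikzcd}
        X \arrow[r] \arrow[d] & Y \arrow[d] \\
        X' \arrow[r] \arrow[d] & Y' \arrow[d] \\
        * \arrow[r] & Z'.
    \end{tikzcd}
\end{center}
By commutativity of the original diagram, the right-hand column composes to $h \circ (Y \to Z) \colon Y \to Z'$. Since $h$ is an equivalence, taking fibers yields $\Fib{Y \to Z'} \simeq \Fib{Y \to Z}$, and by the top fiber sequence the latter is $X$. Hence the outer rectangle of the pasted diagram is cartesian. The lower square is cartesian by construction (being the reformulation of the bottom fiber sequence). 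By the pasting lemma for pullbacks, it follows that the upper square—the one of interest—is cartesian as well.

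I do not expect a real obstacle here: the argument is a purely formal manipulation, relying only on the two standard facts recalled above. The one piece of bookkeeping to carry out is to verify that the equivalence $X \simeq X' \times_{Y'} Y$ produced in this way coincides with the canonical comparison morphism induced by the universal property of the pullback. This is immediate by unwinding definitions, since both maps are determined by the same pair $(X \to X',\, X \to Y)$ arising from the given commutative diagram.
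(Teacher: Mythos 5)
Your proof is correct and essentially the same as the paper's: the paper writes the argument as the chain of equivalences $X \cong Y \times_Z * \cong Y \times_{Z'} * \cong Y \times_{Y'}(Y'\times_{Z'}*) \cong Y \times_{Y'} X'$, which is exactly your pasting of the two squares followed by an application of the pasting lemma. The bookkeeping you flag at the end (that the resulting equivalence is the canonical comparison map) is, as you note, immediate and is left implicit in the paper as well.
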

\begin{proof}
    We have $X \cong Y \times_Z * \cong Y \times_{Z'} * \cong Y \times_{Y'} Y' \times_{Z'} * \cong Y \times_{Y'} X'$,
    where we used in the first and last equivalence that the rows are fiber sequences, and in the second equivalence that $h$ 
    is an equivalence.
\end{proof}

\begin{cor} \label{lem:fracture-square:connective-cover-cartesian}
    Let $f \colon X \to Y$ be an $R$-local equivalence in $\topos X_*$.
    Then the square 
    \begin{center}
        \begin{tikzcd}
            \tau_{\ge 1} X \arrow[r] \arrow[d, "\tau_{\ge 1} f"] &X \arrow[d, "f"] \\
            \tau_{\ge 1} Y \arrow[r] &Y
        \end{tikzcd}
    \end{center}
    is cartesian.
\end{cor}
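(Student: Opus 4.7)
The plan is to reduce this immediately to the preceding lemma \cref{lem:fracture-square:cartesian-fiber} by exhibiting the two rows as fiber sequences for which the rightmost vertical map is an equivalence.

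More precisely, I would start from the standard fiber sequences $\tau_{\ge 1} X \to X \to \tau_{\le 0} X$ and $\tau_{\ge 1} Y \to Y \to \tau_{\le 0} Y$ arising from the Postnikov truncation, together with the induced map $\tau_{\le 0} f \colon \tau_{\le 0} X \to \tau_{\le 0} Y$. Arranging these into the commutative diagram
\begin{center}
    \begin{tikzcd}
        \tau_{\ge 1} X \arrow[r] \arrow[d, "\tau_{\ge 1} f"] &X \arrow[r] \arrow[d, "f"] &\tau_{\le 0} X \arrow[d, "\tau_{\le 0} f"] \\
        \tau_{\ge 1} Y \arrow[r] &Y \arrow[r] &\tau_{\le 0} Y,
    \end{tikzcd}
\end{center}
\cref{lem:fracture-square:cartesian-fiber} tells us that the left square is cartesian as soon as $\tau_{\le 0} f$ is an equivalence.

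The remaining step is then to observe that $\tau_{\le 0} X$ is simply the pointed object $\pi_0(X)$ (and similarly for $Y$), so that $\tau_{\le 0} f$ is an equivalence if and only if $\pi_0(f)$ is an isomorphism of $0$-truncated sheaves. But this is exactly the content of \cref{lem:localization:pi-0-eq}, which says that every $R$-local equivalence induces an isomorphism on $\pi_0$. There is no real obstacle here; the proof is essentially a one-line invocation of the two cited facts, and the only thing to be slightly careful about is that we are working in $\topos X_*$ rather than $\topos X$, so that the basepoints on the Postnikov pieces are chosen compatibly, which is automatic from the functoriality of $\tau_{\le 0}$ and $\tau_{\ge 1}$ on pointed sheaves.
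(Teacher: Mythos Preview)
Your proposal is correct and follows exactly the paper's own argument: the paper's proof is the one-liner ``This follows immediately from \cref{lem:fracture-square:cartesian-fiber}, since $\pi_0(f)$ is an equivalence by \cref{lem:localization:pi-0-eq},'' and your write-up simply makes the implicit fiber sequences $\tau_{\ge 1}(-) \to (-) \to \tau_{\le 0}(-) \cong \pi_0(-)$ explicit.
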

\begin{proof}
    This follows immediately from \cref{lem:fracture-square:cartesian-fiber},
    since $\pi_0(f) \colon \pi_0(X) \to \pi_0(Y)$ is an equivalence by 
    \cref{lem:localization:pi-0-eq}.
\end{proof}

\begin{lem} \label{lem:fracture-square:canonical-squares}
    Let $X \in \topos X_*$ be a connected sheaf.
    Then there are natural commutative squares 
    \begin{equation}
        \label{square:canonical}
        \begin{tikzcd}
            X \arrow[r] \arrow[d] &\prod_p L_p X \arrow[d] \\
            L_R X \arrow[r] &L_R \prod_p L_p X
        \end{tikzcd}
    \end{equation}
    and 
    \begin{equation}
        \label{square:connected-canonical}
        \begin{tikzcd}
            X \arrow[r] \arrow[d] &\tau_{\ge 1} \prod_p L_p X \arrow[d] \\
            L_R X \arrow[r] &\tau_{\ge 1} L_R \prod_p L_p X.
        \end{tikzcd}
    \end{equation}
    The first square is cartesian if and only if the second square is cartesian.
\end{lem}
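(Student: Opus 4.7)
The plan is to construct both squares, realize the first as the pasting of the second with an auxiliary square, and then invoke the pasting law for pullbacks after verifying that the auxiliary square is cartesian.

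First, the square \eqref{square:canonical} is the obvious one, assembled from the units $\id{} \to L_p$ and $\id{} \to L_R$, together with the canonical comparison $L_R \prod_p L_p X$ receiving maps from both $\prod_p L_p X$ (via the unit $\alpha_R$) and $L_R X$ (via $L_R$ applied to $X \to \prod_p L_p X$). For the square \eqref{square:connected-canonical}, use that $X$ is connected and that $\tau_{\ge 1}$ is the right adjoint to the inclusion of connected pointed sheaves into $\topos X_*$: the map $X \to \prod_p L_p X$ then lifts uniquely to $X \to \tau_{\ge 1} \prod_p L_p X$. By \cref{lem:localization:pi-0-eq}, the $R$-local equivalence $X \to L_R X$ induces an isomorphism on $\pi_0$, so $L_R X$ is again connected; hence the map $L_R X \to L_R \prod_p L_p X$ analogously lifts to $L_R X \to \tau_{\ge 1} L_R \prod_p L_p X$. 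The remaining right-hand vertical is $\tau_{\ge 1}$ applied to the natural transformation $\prod_p L_p X \to L_R \prod_p L_p X$.

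For the equivalence of the two cartesianness conditions, observe that the square \eqref{square:canonical} factors as the pasting
\begin{center}
    \begin{tikzcd}
        X \arrow[r] \arrow[d] & \tau_{\ge 1} \prod_p L_p X \arrow[r] \arrow[d] & \prod_p L_p X \arrow[d] \\
        L_R X \arrow[r] & \tau_{\ge 1} L_R \prod_p L_p X \arrow[r] & L_R \prod_p L_p X
    \end{tikzcd}
\end{center}
where the left square is \eqref{square:connected-canonical}. By the pasting lemma for pullbacks, it suffices to show that the right-hand square is cartesian. For this, I will apply \cref{lem:fracture-square:cartesian-fiber} to the diagram
\begin{center}
    \begin{tikzcd}
        \tau_{\ge 1} \prod_p L_p X \arrow[r] \arrow[d] & \prod_p L_p X \arrow[r] \arrow[d] & \tau_{\le 0} \prod_p L_p X \arrow[d] \\
        \tau_{\ge 1} L_R \prod_p L_p X \arrow[r] & L_R \prod_p L_p X \arrow[r] & \tau_{\le 0} L_R \prod_p L_p X
    \end{tikzcd}
\end{center}
whose rows are the fiber sequences defining the connective covers. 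The right vertical is $\tau_{\le 0}$ applied to the unit $\alpha_R$ of the $R$-localization at $\prod_p L_p X$, which is an $R$-local equivalence; by \cref{lem:localization:pi-0-eq}, this map is an equivalence on $\pi_0$, hence an equivalence. \Cref{lem:fracture-square:cartesian-fiber} then yields that the left square of this diagram is cartesian, which is exactly the right square in the pasting. The pasting law now gives the claimed equivalence.

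The only subtle point is the existence and naturality of the canonical lift in \eqref{square:connected-canonical}; once one observes that $L_R$ preserves connectivity of connected sheaves via \cref{lem:localization:pi-0-eq}, the rest is formal.
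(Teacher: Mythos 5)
Your proof is correct and follows essentially the same route as the paper: factor \eqref{square:canonical} as the pasting of \eqref{square:connected-canonical} with the connective-cover square, then apply the pasting law once that auxiliary square is shown cartesian. The paper simply cites \cref{lem:fracture-square:connective-cover-cartesian} for the auxiliary square, whereas you unfold that corollary inline via \cref{lem:fracture-square:cartesian-fiber} and \cref{lem:localization:pi-0-eq} — the same underlying argument.
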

\begin{proof}
    The square \eqref{square:canonical} is given by the natural maps into the localization functors (i.e.\ the units of the associated adjunctions).
    By assumption $X$ is connected, and hence so is $L_R X$ by \cref{lem:localization:pi-0-eq}.
    Therefore, the square \eqref{square:canonical} factors as 
    \begin{center}
        \begin{tikzcd}
            X \arrow[r] \arrow[d] &\tau_{\ge 1} \prod_p L_p X \arrow[d] \arrow[r] &\prod_p L_p X \arrow[d] \\
            L_R X \arrow[r] &\tau_{\ge 1} L_R \prod_p L_p X \arrow[r] &L_R \prod_p L_p X,
        \end{tikzcd}
    \end{center}
    which proves the existence of the square \eqref{square:connected-canonical}.
    Since the right square in the above diagram is cartesian (see \cref{lem:fracture-square:connective-cover-cartesian}),
    the last claim follows from the pasting law for pullback squares (the dual of \cite[Lemma 4.4.2.1]{highertopoi}).
\end{proof}

\begin{lem} \label{lem:fracture-square:EM-space}
    For $A \in \AbObj{\Disc{\topos X}}$ and $n \ge 1$, the canonical squares
    \begin{equation}
        \label{square:EM:canonical}
        \begin{tikzcd}
            K(A, n) \arrow[r] \arrow[d] &\prod_p L_p K(A, n) \arrow[d] \\
            L_R K(A, n) \arrow[r] &L_R \prod_p L_p K(A, n)
        \end{tikzcd}
    \end{equation}
    and 
    \begin{equation}
        \label{square:EM:connected-canonical}
        \begin{tikzcd}
            K(A, n) \arrow[r] \arrow[d] &\tau_{\ge 1}\prod_p L_p K(A, n) \arrow[d] \\
            L_R K(A, n) \arrow[r] &\tau_{\ge 1} L_R \prod_p L_p K(A, n)
        \end{tikzcd}
    \end{equation}
    from \cref{lem:fracture-square:canonical-squares}
    are cartesian.
\end{lem}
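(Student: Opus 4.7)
The plan is to deduce this from the stable arithmetic fracture square by applying the infinite loop space functor $\pLoop \colon \spectra X \to \topos X_*$. By \cref{lem:fracture-square:canonical-squares} it suffices to show that \eqref{square:EM:connected-canonical} is cartesian. Since $\Sigma^n HA \in \spectra X$ is $n$-connective and $n \ge 1$, it lies in $\tcon[1]{\spectra X}$, so \cref{lem:stable:arithmetic-fracture-square-connective-cover} applied with threshold $1$ yields a cartesian square
\begin{center}
    \begin{tikzcd}
        \Sigma^n HA \arrow[r] \arrow[d] &\tau_{\ge 1} \prod_p L_p \Sigma^n HA \arrow[d] \\
        L_R \Sigma^n HA \arrow[r] &\tau_{\ge 1} L_R \prod_p L_p \Sigma^n HA
    \end{tikzcd}
\end{center}
in $\spectra X$. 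Applying the limit-preserving functor $\pLoop$ gives a cartesian square in $\topos X_*$, and the remaining task is to identify its corners with those of \eqref{square:EM:connected-canonical}.

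For three of the four corners this is straightforward: the upper-left is $K(A,n)$ by definition, the lower-left is $L_R K(A,n)$ by \cref{lem:localization:infinite-loop-space} (applied to the $1$-connective $\Sigma^n HA$), and the upper-right becomes $\tau_{\ge 1} \prod_p L_p K(A,n)$ by combining three facts: $\pLoop$ preserves products, $\pLoop$ commutes with $\tau_{\ge 1}$ (easily checked on homotopy objects in the hypercomplete topos $\topos X$), and the identification $\pLoop L_p \Sigma^n HA \cong L_p K(A,n)$ from \cite[Corollary 3.18]{mattis2024unstable}.

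The main obstacle is the lower-right corner: one would like to commute $\pLoop$ past $L_R$ via \cref{lem:localization:infinite-loop-space}, but the spectrum $\prod_p L_p \Sigma^n HA$ is in general only $(n-1)$-connective and fails to be $1$-connective when $n = 1$, so the hypothesis of that lemma is not met. The workaround is to first swap $\tau_{\ge 1}$ past $L_R$ on the stable side, using that $\tau_{\ge 1} L_R \cong L_R \tau_{\ge 1}$ on $\spectra X$ (which one checks by comparing homotopy groups via \cref{lem:stable:R-local-on-htpy}, since $\tau_{\ge 1} L_R E$ has $R$-local homotopy groups and the canonical map $\tau_{\ge 1} E \to \tau_{\ge 1} L_R E$ is an $R$-local equivalence on each homotopy group). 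Now $L_R$ is applied to the $1$-connective spectrum $\tau_{\ge 1} \prod_p L_p \Sigma^n HA$, so \cref{lem:localization:infinite-loop-space} lets me commute $\pLoop$ past $L_R$. A final application of $\pLoop \tau_{\ge 1} \cong \tau_{\ge 1} \pLoop$ together with the unstable analogue \cref{lem:localization:connected-cover-R-loc} produces $\tau_{\ge 1} L_R \prod_p L_p K(A,n)$, and naturality of all the equivalences in play ensures the resulting cartesian square is the canonical one.
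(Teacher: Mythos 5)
Your proposal follows the same overall route as the paper: apply \cref{lem:stable:arithmetic-fracture-square-connective-cover} to the $1$-connective $\Sigma^n HA$, push through $\pLoop$, identify corners using \cref{lem:localization:infinite-loop-space}, and handle the lower-right by first commuting $\tau_{\ge 1}$ past $L_R$ in $\spectra{X}$ so that $L_R$ is applied to a $1$-connective spectrum before invoking \cref{lem:localization:infinite-loop-space}. This is precisely the paper's argument.

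One small imprecision in your identification of the upper-right corner: the cited result from \cite[Corollary 3.18]{mattis2024unstable} is $\tau_{\ge 1}\pLoop L_p \Sigma^n HA \cong L_p K(A,n)$, not $\pLoop L_p \Sigma^n HA \cong L_p K(A,n)$. The latter fails already for $n=1$, since $L_p \Sigma HA$ can have nonzero $\pi_0$ (e.g.\ for $A = \Zpinf$), whereas $L_p K(A,1)$ is connected. Your outer $\tau_{\ge 1}$ does rescue the conclusion, but the step from $\tau_{\ge 1}\prod_p \pLoop L_p \Sigma^n HA$ to $\tau_{\ge 1}\prod_p L_p K(A,n)$ then needs the additional fact $\tau_{\ge 1}\prod_p (-) \cong \tau_{\ge 1}\prod_p \tau_{\ge 1}(-)$, i.e.\ \cref{lem:nilpotence-stability:connective-cover-limit}, which the paper invokes explicitly at this point.
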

\begin{proof}
    By \cref{lem:fracture-square:canonical-squares},
    it suffices to show that the square \eqref{square:EM:connected-canonical}
    is cartesian.
    We write $HA \in \spectra{X}$ for the Eilenberg-MacLane spectrum of $A$.
    Since $\Sigma^n HA$ is $1$-connective, we get the following cartesian square from 
    \cref{lem:stable:arithmetic-fracture-square-connective-cover}, using the fact that $\pLoop$ preserves limits:
    \begin{center}
        \begin{tikzcd}
            \pLoop \Sigma^n HA \arrow[r] \arrow[d] &\pLoop \tau_{\ge 1} \prod_p L_p \Sigma^n HA \arrow[d] \\
            \pLoop L_R \Sigma^n HA \arrow[r] &\pLoop \tau_{\ge 1} L_R \prod_p L_p \Sigma^n HA.
        \end{tikzcd}
    \end{center}
    Note that we have $K(A, n) \cong \pLoop \Sigma^n HA$.
    Moreover, we have seen $\pLoop L_R E \cong L_R \pLoop E$ for $1$-connective sheaves of spectra $E$ in \cref{lem:localization:infinite-loop-space}.
    It follows from \cref{lem:stable:R-local-on-htpy} that $L_R \Sigma^n HA$ is $n$-connective, so in particular $1$-connective,
    whence $\pLoop L_R \Sigma^n HA \cong L_R K(A, n)$.
    Thus, we arrive at the following cartesian square (using that $L_R$ commutes with the $1$-connective cover functor, which follows easily from \cref{lem:stable:R-local-on-htpy}):
    \begin{center}
        \begin{tikzcd}
            K(A, n) \arrow[r] \arrow[d] &\tau_{\ge 1} \prod_p \pLoop L_p \Sigma^n HA \arrow[d] \\
            L_R K(A,n) \arrow[r] & L_R \tau_{\ge 1} \prod_p \pLoop L_p \Sigma^n HA.
        \end{tikzcd}
    \end{center}
    We now calculate 
    \begin{equation*}
        \tau_{\ge 1} \prod_p \pLoop L_p \Sigma^n HA 
        \cong \tau_{\ge 1} \prod_p \tau_{\ge 1} \pLoop L_p \Sigma^n HA
        \cong \tau_{\ge 1} \prod_p L_p K(A, n),
    \end{equation*}
    where we used \cref{lem:nilpotence-stability:connective-cover-limit} and \cite[Corollary 3.18]{mattis2024unstable}.
    Similarly, we calculate 
    \begin{equation*}
        L_R \tau_{\ge 1} \prod_p \pLoop L_p \Sigma^n HA
        \cong L_R \tau_{\ge 1} \prod_p L_p K(A, n)
        \cong \tau_{\ge 1} L_R \prod_p L_p K(A, n),
    \end{equation*}
    where we used the above equivalence and \cref{lem:localization:connected-cover-R-loc}.
    Thus, the above cartesian square is equivalent to 
    \begin{center}
        \begin{tikzcd}
            K(A, n) \arrow[r] \arrow[d] &\tau_{\ge 1} \prod_p L_p K(A, n) \arrow[d] \\
            L_R K(A,n) \arrow[r] &\tau_{\ge 1} L_R \prod_p L_p K(A, n),
        \end{tikzcd}
    \end{center}
    which proves the lemma.
\end{proof}

We need another lemma about pullback squares and connected covers.
This is an unstable version of \cref{lem:stable:cartesian-square-connective-cover}.
\begin{lem} \label{lem:fracture-sqaure:connected-cover-pullback-always}
    Let 
    \begin{center}
        \begin{tikzcd}
            X \arrow[r] \arrow[d] &Y \arrow[d] \\
            X' \arrow[r] &Y'
        \end{tikzcd}
    \end{center}
    be a cartesian square in $\topos X_*$.
    Suppose that $X$ is connected.
    Then 
    \begin{center}
        \begin{tikzcd}
            X \arrow[r] \arrow[d] &\tau_{\ge 1} Y \arrow[d] \\
            \tau_{\ge 1} X' \arrow[r] &\tau_{\ge 1} Y'
        \end{tikzcd}
    \end{center}
    is cartesian.
\end{lem}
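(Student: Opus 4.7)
The plan is to identify the pullback $P := \tau_{\ge 1} X' \times_{\tau_{\ge 1} Y'} \tau_{\ge 1} Y$ with $X$ by rewriting $P$ as an iterated pullback and then exploiting the connectedness of $X$. The key input is the defining cartesian square $\tau_{\ge 1} Z \cong Z \times_{\pi_0 Z} *$ expressing the connected cover as a fiber, valid for any pointed sheaf $Z$.

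Substituting this formula at each of the three vertices $\tau_{\ge 1} X', \tau_{\ge 1} Y', \tau_{\ge 1} Y$ in the defining span for $P$ produces $P$ as the limit of a $3 \times 3$ diagram whose rows are $(X', Y', Y)$, $(\pi_0 X', \pi_0 Y', \pi_0 Y)$, and $(*, *, *)$, with the obvious transition maps in both directions. Applying Fubini for iterated limits but computing the row-limits first yields
\begin{equation*}
    P \cong X \times_{Z_0} *, \qquad Z_0 \coloneqq \pi_0 X' \times_{\pi_0 Y'} \pi_0 Y.
\end{equation*}
Since $Z_0$ is a pullback of discrete sheaves it is itself $0$-truncated, so $\Omega Z_0 \cong *$. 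Because $X$ is connected, the natural maps $X \to \pi_0 X'$ and $X \to \pi_0 Y$ both factor through $\pi_0 X = *$, and therefore the induced map $f \colon X \to Z_0$ is null-homotopic. Using the factorization $X \to * \to Z_0$, the fiber over the basepoint decomposes as
\begin{equation*}
    \Fib{f} \cong X \times_* \Omega Z_0 \cong X,
\end{equation*}
and hence $P \cong X$, which gives the desired cartesianness.

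The main obstacle I anticipate is the Fubini rearrangement in the first step: one must justify the swap of iterated limits precisely and verify that the resulting pullback really takes the simple form $X \times_{Z_0} *$ rather than some more intricate iterated shape involving all of $\pi_0 X'$, $\pi_0 Y'$, $\pi_0 Y$, and $*$. Once this rewriting is in place, the remaining arguments — discreteness of $Z_0$ and the null-homotopy of $X \to Z_0$ from connectedness — are essentially formal.
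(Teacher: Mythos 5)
Your proof is correct, and it takes a genuinely different route from the paper. The paper invokes \cref{lem:nilpotence-stability:connective-cover-limit} to get $\tau_{\ge 1} F \cong X$ for $F$ the target pullback, reduces to showing $F$ is connected, and then proves $\pi_0 F \cong *$ by comparing the fibers $A = \Fib{X \to X'}$ and $B = \Fib{F \to \tau_{\ge 1} X'}$ via two long exact sequences of homotopy sheaves. Your proof instead feeds the fiber sequence $\tau_{\ge 1} Z \to Z \to \pi_0 Z$ directly into the span defining $P$ and rearranges with Fubini, reducing everything to the observation that a pointed map from a connected object into a $0$-truncated object is canonically null and that loops on a $0$-truncated object are trivial. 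This is a cleaner, more structural argument that avoids the long exact sequences entirely; the Fubini swap you worried about is unproblematic since the $3\times 3$ diagram is a genuine $\Lambda^2_2 \times \Lambda^2_2$-diagram (it is obtained by applying the functorial fiber sequence $\tau_{\ge 1} \to \id \to \tau_{\le 0}$ objectwise to the original cospan), and Fubini for $\infty$-categorical limits is standard.

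One small point you glossed over: to conclude cartesianness you need the \emph{canonical} comparison map $\gamma \colon X \to P$ to be an equivalence, not merely that some abstract equivalence $P \cong X$ exists. This is easy to supply: under your Fubini identification, $\gamma$ is the map induced by the cone from $X$ over the $3\times 3$ diagram, which restricts on the top row to the identity cone of the original cartesian square. Hence $p_1 \circ \gamma \simeq \id_X$, where $p_1 \colon X \times_{Z_0} * \to X$ is the projection; your pasting-law computation shows $p_1$ is an equivalence (it is identified with the projection $X \times \Omega Z_0 \to X$ with $\Omega Z_0 \cong *$), and a section of an equivalence is an equivalence. With that sentence added, the argument is complete.
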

\begin{proof}
    Let $F$ be the limit of the cospan $\tau_{\ge 1} X' \to \tau_{\ge 1} Y' \leftarrow \tau_{\ge 1} Y$.
    By \cref{lem:nilpotence-stability:connective-cover-limit} there is an equivalence $X \cong \tau_{\ge 1} X \cong \tau_{\ge 1} F$.
    It therefore suffices to show that $F$ is connected.
    Since $X \cong X' \times_{Y'} Y$, we know that $A \coloneqq \Fib{X \to X'} \cong \Fib{Y \to Y'}$.
    Similarly, since $F \cong \tau_{\ge 1} X' \times_{\tau_{\ge 1} Y'} \tau_{\ge 1} Y$,
    we know that $B \coloneqq \Fib{F \to \tau_{\ge 1} X'} \cong \Fib{\tau_{\ge 1} Y \to \tau_{\ge 1} Y'}$.
    Thus, we get the following diagrams of long exact sequences of sheaves of pointed sets:
    \begin{center}
        \begin{tikzcd}
            \pi_1(X) \arrow[r] \arrow[d] &\pi_1(X') \arrow[r] \arrow[d] &\pi_0 (A) \arrow[d, equal] \arrow[r] &*\arrow[d] \\
            \pi_1(Y) \arrow[r] & \pi_1(Y') \arrow[r] &\pi_0(A) \arrow[r] &\pi_0(Y),
        \end{tikzcd}
    \end{center}
    and 
    \begin{center}
        \begin{tikzcd}
            \pi_1(F) \arrow[r] \arrow[d] &\pi_1(X') \arrow[r] \arrow[d] &\pi_0 (B) \arrow[d, equal] \arrow[r] &\pi_0(F) \arrow [r]\arrow[d] &* \arrow[d] \\
            \pi_1(Y) \arrow[r] & \pi_1(Y') \arrow[r] &\pi_0(B) \arrow[r] &* \arrow[r] &*.
        \end{tikzcd}
    \end{center}
    From the first diagram, we see that $\pi_0(A) \to \pi_0(Y)$ is the trivial map.
    In particular, $\pi_0(A)$ is the quotient of $\pi_1(Y')$ by $\pi_1(Y)$.
    But by the second diagram, this also true for $\pi_0(B)$.
    Therefore we conclude that $\pi_0(A) \cong \pi_0(B)$.
    But from the first diagram, we see that $\pi_1(X') \to \pi_0(A)$ is surjective.
    In particular, we see in the second diagram that this implies that $\pi_0(F) = *$,
    which is what we wanted to prove.
\end{proof}

\begin{prop} \label{lem:fracture-square:truncated}
    Let $X \in \topos X_*$ be $n$-truncated for some $n \ge 0$ and nilpotent.
    Then the canonical squares
    \begin{equation}
        \label{square:truncated:canonical}
        \begin{tikzcd}
            X \arrow[r] \arrow[d] & \prod_p L_p X \arrow[d] \\
            L_R X \arrow[r] &L_R \prod_p L_p X
        \end{tikzcd}
    \end{equation}
    and 
    \begin{equation}
        \label{square:truncated:connected-canonical}
        \begin{tikzcd}
            X \arrow[r] \arrow[d] & \tau_{\ge 1}\prod_p L_p X \arrow[d] \\
            L_R X \arrow[r] & \tau_{\ge 1} L_R \prod_p L_p X
        \end{tikzcd}
    \end{equation}
    from \cref{lem:fracture-square:canonical-squares}
    are cartesian.
\end{prop}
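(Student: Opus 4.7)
The plan is to first reduce via \cref{lem:fracture-square:canonical-squares} to showing that the connective-cover square \eqref{square:truncated:connected-canonical} is cartesian, and then to induct on the length $N$ of a principal refinement of the Postnikov tower of $X$, which exists by \cite[Lemma A.15]{mattis2024unstable}. Such a refinement is a tower $X = X_N \to X_{N-1} \to \cdots \to X_0 = *$ of nilpotent truncated sheaves with fiber sequences $X_j \to X_{j-1} \to K(A_j, m_j)$, where $m_j \ge 2$ and $X_{j-1}$ is $(m_j - 1)$-truncated, ensuring that \cref{lem:nilpotence-stability:p-comp-prod-fiber} applies at each stage. The base case $X = *$ is trivial.

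For the inductive step, set $Y \coloneqq X_{N-1}$ and $K \coloneqq K(A_N, m_N)$. For any pointed sheaf $Z$, let $P_Z \coloneqq \tau_{\ge 1} \prod_p L_p Z \times_{\tau_{\ge 1} L_R \prod_p L_p Z} L_R Z$ be the pullback of the other three corners of the square \eqref{square:truncated:connected-canonical}; then this square is cartesian for $Z$ precisely when the canonical map $Z \to P_Z$ is an equivalence. By induction $Y \to P_Y$ is an equivalence, and by \cref{lem:fracture-square:EM-space} so is $K \to P_K$. I plan to apply \cref{lem:fracture-square:cartesian-fiber} to the diagram
\[
\begin{tikzcd}
X \ar[r] \ar[d] & Y \ar[r] \ar[d, "\simeq"] & K \ar[d, "\simeq"] \\
P_X \ar[r] & P_Y \ar[r] & P_K
\end{tikzcd}
\]
to conclude that the left square is cartesian, yielding $X \cong Y \times_{P_Y} P_X \cong P_X$ and completing the induction, provided that the bottom row is a fiber sequence.

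Since $P$ is a limit of the three functors $L_R$, $\tau_{\ge 1} \prod_p L_p$, and $\tau_{\ge 1} L_R \prod_p L_p$, and limits of fiber sequences are fiber sequences, it suffices to check that each of these sends the fiber sequence $X \to Y \to K$ to a fiber sequence. Note first that $X$ is connected, since $K$ is simply connected as $m_N \ge 2$. The case of $L_R$ is then \cref{lem:localization:fiber-cover}. For $\tau_{\ge 1} \prod_p L_p$, \cref{lem:nilpotence-stability:p-comp-prod-fiber} gives $\tau_{\ge 1} \prod_p L_p X \cong \tau_{\ge 1} \Fib{\tau_{\ge 1} \prod_p L_p Y \to \tau_{\ge 1} \prod_p L_p K}$, and the outer $\tau_{\ge 1}$ becomes redundant once the underlying fiber is already connected.

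The main technical hurdle is thus to verify this connectivity. Using \cite[Corollary 3.18]{mattis2024unstable}, $L_p K \cong \pLoop \limil{k} \Sigma^{m_N} H A_N \sslash p^k$; since the inverse system $\{A_N/p^k\}_k$ has surjective transition maps, the Milnor exact sequence implies that the underlying limit-spectrum is $m_N$-connective, so $L_p K$ is $(m_N - 1)$-connected and (as $m_N \ge 2$) simply connected. This property is preserved by taking products over all primes (checkable on stalks, using that $\topos X$ has enough points) and then by $\tau_{\ge 1}$, so $\tau_{\ge 1} \prod_p L_p K$ is simply connected and the long exact sequence yields a connected fiber. The analogous argument for $\tau_{\ge 1} L_R \prod_p L_p$ proceeds by commuting $L_R$ with $\tau_{\ge 1}$ via \cref{lem:localization:connected-cover-R-loc}, invoking \cref{lem:nilpotence-stability:nilpotent-product} to see that $\prod_p L_p K$ is nilpotent, and then using \cref{lem:localization:R-local-on-htpy} to propagate simple connectivity through $L_R$.
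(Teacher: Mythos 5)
Your overall strategy (reduce to the connected-cover square, induct on a principal Postnikov refinement, reduce to showing the pullback functor $P$ takes $X \to Y \to K$ to a fiber sequence, with input from \cref{lem:fracture-square:EM-space} and \cref{lem:nilpotence-stability:p-comp-prod-fiber}) closely mirrors the paper's. However, there is a genuine gap at the crucial connectivity step: you claim that $L_p K$ is $(m_N-1)$-connected and hence simply connected, so that the outer $\tau_{\ge 1}$ in \cref{lem:nilpotence-stability:p-comp-prod-fiber} can be dropped. This claim is false. The Milnor exact sequence reads
\[
0 \to \limone{k} \pi_{j+1}\bigl(\Sigma^{m_N} HA_N \sslash p^k\bigr) \to \pi_j\bigl(\limil{k} \Sigma^{m_N} HA_N \sslash p^k\bigr) \to \limil{k} \pi_j\bigl(\Sigma^{m_N} HA_N \sslash p^k\bigr) \to 0.
\]
Surjectivity of $A_N/p^{k+1} \to A_N/p^k$ only kills the $\limone{k}$-contribution to $\pi_{m_N - 1}$; it says nothing about the remaining $\limil{k}$-contribution, which is $\limil{k} A_N[p^k]$ (the inverse system of $p^k$-torsion subgroups). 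This need not vanish: for $A_N = \Zpinf$ one has $A_N[p^k] \cong \Z/p^k$ with transition maps the natural projections, so $\limil{k} A_N[p^k] \cong \Z_p$, giving $L_p \Sigma^{m_N} H\Zpinf \cong \Sigma^{m_N - 1} H\Z_p$. When $m_N = 2$ this produces $\pi_1 L_p K(\Zpinf, 2) \cong \Z_p \neq 0$, so $\tau_{\ge 1} \prod_p L_p K$ is not simply connected, the fiber $\Fib{\tau_{\ge 1}\prod_p L_p Y \to \tau_{\ge 1}\prod_p L_p K}$ can fail to be connected, and $\tau_{\ge 1} \prod_p L_p$ does not send the fiber sequence $X \to Y \to K$ to a fiber sequence.

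The paper avoids ever needing this connectivity: it keeps the outer $\tau_{\ge 1}$ throughout, observes that the square
\[
\begin{tikzcd}
\Fib{Y \to K} \arrow[r]\arrow[d] & \Fib{\tau_{\ge 1}\prod_p L_p Y \to \tau_{\ge 1}\prod_p L_p K} \arrow[d]\\
\Fib{L_R Y \to L_R K} \arrow[r] & \Fib{\tau_{\ge 1}L_R\prod_p L_p Y \to \tau_{\ge 1}L_R\prod_p L_p K}
\end{tikzcd}
\]
is cartesian because it is a limit of the cartesian squares for $Y$ and $K$, and then invokes \cref{lem:fracture-sqaure:connected-cover-pullback-always} to move to the square with $\tau_{\ge 1}$ applied to the three nontrivial corners. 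That lemma only requires the upper-left corner $\Fib{Y \to K} \cong X$ to be connected, which is exactly what you already have; no connectivity of the $p$-completed fiber is ever needed. To repair your argument, replace the claim that $P$ preserves fiber sequences with this truncation-and-pullback step at the very end.
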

\begin{proof}
    By \cref{lem:fracture-square:canonical-squares} it suffices to show that the square \eqref{square:truncated:connected-canonical} is cartesian.
    Since $X$ is truncated and nilpotent, there is a sequence of truncated and nilpotent sheaves 
    $X = X_n \xrightarrow{p_n} X_{n-1} \to \dots \to X_0 = *$
    such that $p_k$ fits into a fiber sequence $X_k \to X_{k-1} \to K(A_k, n_k)$
    for some $A_k \in \AbObj{\Disc{\topos X}}$ and $n_k \ge 2$,
    see \cite[Lemma A.15]{mattis2024unstable}.
    We will prove the result by induction on the minimal length of such a sequence,
    the case $n = 0$ is trivial.
    We will write $Y \coloneqq X_{n-1}$ and $K \coloneqq K(A_n, n_n)$,
    i.e.\ there is a fiber sequence $X \to Y \to K$.
    By induction, we know that the canonical square 
    \begin{equation} \label{lem:fracture-square:square:Y}
        \begin{tikzcd}
            Y \arrow[r] \arrow[d] & \tau_{\ge 1} \prod_p L_p Y \arrow[d] \\
            L_R Y \arrow[r] & \tau_{\ge 1} L_R \prod_p L_p Y           
        \end{tikzcd}
    \end{equation}
    is cartesian. Moreover, it follows from \cref{lem:fracture-square:EM-space} that the canonical square 
    \begin{equation} \label{lem:fracture-square:square:EM}
        \begin{tikzcd}
            K \arrow[r] \arrow[d] & \tau_{\ge 1} \prod_p L_p K \arrow[d] \\
            L_R K \arrow[r] &\tau_{\ge 1} L_R \prod_p L_p K           
        \end{tikzcd}
    \end{equation}
    is cartesian.
    Using $X \cong \Fib{Y \to K}$, \cref{lem:nilpotence-stability:p-comp-prod-fiber} now supplies us with an equivalence 
    \begin{equation*}
        \tau_{\ge 1} \prod_p L_p X 
        \cong \tau_{\ge 1} \Fib{\tau_{\ge 1} \prod_p L_p Y \to \tau_{\ge 1} \prod_p L_p K}.
    \end{equation*}
    This then also implies 
    \begin{align*}
        \tau_{\ge 1} L_R \prod_p L_p X 
        &\cong L_R \tau_{\ge 1} \prod_p L_p X \\
        &\cong L_R \tau_{\ge 1} \Fib{\tau_{\ge 1} \prod_p L_p Y \to \tau_{\ge 1} \prod_p L_p K} \\
        &\cong \tau_{\ge 1} \Fib{L_R \tau_{\ge 1} \prod_p L_p Y \to L_R \tau_{\ge 1} \prod_p L_p K} \\
        &\cong \tau_{\ge 1} \Fib{\tau_{\ge 1} L_R \prod_p L_p Y \to \tau_{\ge 1} L_R \prod_p L_p K},
    \end{align*}
    where we used \cref{lem:localization:connected-cover-R-loc} in the first and last equivalence, 
    the above equivalence for the second equivalence,
    and \cref{lem:localization:fiber-cover} in the third equivalence (note that the relevant sheaves 
    are nilpotent by \cref{lem:nilpotence-stability:nilpotent-product}).
    Moreover, \cref{lem:localization:fiber-cover} supplies us with an equivalence 
    \begin{equation*}
        L_R X \cong L_R \Fib{Y \to K} \cong \Fib{L_R Y \to L_R K},
    \end{equation*}
    since $X$ is connected.
    Thus, plugging in these equivalences, we have to show that the following square 
    is cartesian:
    \begin{center}
        \begin{tikzcd}
            \Fib{Y \to K} \arrow[r] \arrow[d] &\tau_{\ge 1} \Fib{\tau_{\ge 1} \prod_p L_p Y \to \tau_{\ge 1} \prod_p L_p K} \arrow[d] \\
            \Fib{L_R Y \to L_R K}\arrow[r] &\tau_{\ge 1} \Fib{\tau_{\ge 1} L_R \prod_p L_p Y \to \tau_{\ge 1} L_R \prod_p L_p K}.
        \end{tikzcd}
    \end{center}
    Since limits commute with limits,
    and since the squares \eqref{lem:fracture-square:square:Y} and \eqref{lem:fracture-square:square:EM}
    are cartesian, we know that the square
    \begin{center}
        \begin{tikzcd}
            \Fib{Y \to K} \arrow[r] \arrow[d] &\Fib{\tau_{\ge 1}\prod_p L_p Y \to \tau_{\ge 1}\prod_p L_p K} \arrow[d] \\
            \Fib{L_R Y \to L_R K}\arrow[r] &\Fib{\tau_{\ge 1}L_R \prod_p L_p Y \to \tau_{\ge 1} L_R \prod_p L_p K}.
        \end{tikzcd}
    \end{center}
    is cartesian.
    Thus, the lemma follows from an application of \cref{lem:fracture-sqaure:connected-cover-pullback-always},
    as $X \cong \Fib{Y \to K}$ and $L_R X \cong \Fib{L_R Y \to L_R K}$ are connected (use \cref{lem:localization:pi-0-eq} for the second claim).
\end{proof}

\begin{thm} \label{lem:fracture-square:main-thm}
    Let $\topos X$ be an $\infty$-topos with a locally finite-dimensional cover $\col{\topos U_i}{i \in I}$.
    Suppose that $X \in \topos X_*$ is a nilpotent sheaf.
    Then the canonical squares
    \begin{equation}
        \label{square:limit:canonical}
        \begin{tikzcd}
            X \arrow[r] \arrow[d] &\prod_p L_p X \arrow[d] \\
            L_R X \arrow[r] &L_R \prod_p L_p X         
        \end{tikzcd}
    \end{equation}
    and 
    \begin{equation}
        \label{square:limit:connected-canonical}
        \begin{tikzcd}
            X \arrow[r] \arrow[d] &\tau_{\ge 1} \prod_p L_p X \arrow[d] \\
            L_R X \arrow[r] &\tau_{\ge 1} L_R \prod_p L_p X         
        \end{tikzcd}
    \end{equation}
    from \cref{lem:fracture-square:canonical-squares}
    are cartesian.
\end{thm}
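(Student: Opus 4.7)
The plan is to bootstrap the truncated case (\cref{lem:fracture-square:truncated}) to the general case by passing to the limit along the Postnikov tower. First, by \cref{lem:fracture-square:canonical-squares} it suffices to show that the connected-cover square \eqref{square:limit:connected-canonical} is cartesian, since $X$ is connected (being nilpotent). Since $\topos X$ is Postnikov-complete by \cref{lem:htpydim:postnikov-complete}, we have $X \cong \limil{n} \tau_{\le n} X$; each $\tau_{\le n} X$ is truncated and nilpotent (\cref{lem:localization:nilpotence-truncation}), so \cref{lem:fracture-square:truncated} applied levelwise produces a cartesian square
\begin{equation*}
    \begin{tikzcd}
        \tau_{\le n} X \arrow[r] \arrow[d] & \tau_{\ge 1} \prod_p L_p \tau_{\le n} X \arrow[d] \\
        L_R \tau_{\le n} X \arrow[r] & \tau_{\ge 1} L_R \prod_p L_p \tau_{\le n} X.
    \end{tikzcd}
\end{equation*}
Because limits of cartesian squares are cartesian, taking the limit over $n$ yields a cartesian square; the task then reduces to identifying each of its four corners with the corresponding corner of \eqref{square:limit:connected-canonical}.

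Three of the four identifications are immediate from results already proved: $\limil{n} \tau_{\le n} X \cong X$ by Postnikov completeness; the Postnikov tower is nilpotent locally highly connected by \cref{lem:htpydim:highly-connected-post-tower}, so \cref{lem:htpydim:R-loc-of-highly-connected} gives $\limil{n} L_R \tau_{\le n} X \cong L_R X$; and combining \cref{lem:nilpotence-stability:connective-cover-limit} (to commute $\tau_{\ge 1}$ past the limit, using that each $\tau_{\ge 1} \prod_p L_p \tau_{\le n} X$ is connected) with \cref{lem:htpydim:p-comp-of-post-tower} gives $\limil{n} \tau_{\ge 1} \prod_p L_p \tau_{\le n} X \cong \tau_{\ge 1} \prod_p L_p X$.

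The fourth corner, $\limil{n} \tau_{\ge 1} L_R \prod_p L_p \tau_{\le n} X$, is the main point. I would first rewrite each term as $L_R \tau_{\ge 1} \prod_p L_p \tau_{\le n} X$ via \cref{lem:localization:connected-cover-R-loc}. The tower $(\tau_{\ge 1} \prod_p L_p \tau_{\le n} X)_n$ is nilpotent and locally highly connected subordinate to $\Cat U$ by \cref{lem:htpydim:prod-p-compl-loc-high-con}, so \cref{lem:htpydim:R-loc-of-highly-connected} lets $L_R$ commute with the limit. Substituting the previous identification and reapplying \cref{lem:localization:connected-cover-R-loc} gives
\begin{equation*}
    \limil{n} \tau_{\ge 1} L_R \prod_p L_p \tau_{\le n} X
    \cong L_R \limil{n} \tau_{\ge 1} \prod_p L_p \tau_{\le n} X
    \cong L_R \tau_{\ge 1} \prod_p L_p X
    \cong \tau_{\ge 1} L_R \prod_p L_p X.
\end{equation*}

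The main obstacle is the last corner: it is the only one where both $L_R$ and the infinite product of $p$-completions must be exchanged with an infinite Postnikov limit, and this is precisely where the locally finite-dimensional cover hypothesis is essential, since it is what allows \cref{lem:htpydim:R-loc-of-highly-connected} and \cref{lem:htpydim:prod-p-compl-loc-high-con} to apply. The remaining bookkeeping is making sure the natural comparison maps between the limit square and \eqref{square:limit:connected-canonical} agree, which is routine since all identifications are via canonical maps.
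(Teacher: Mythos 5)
Your plan matches the paper's: apply \cref{lem:fracture-square:truncated} levelwise along the Postnikov tower, take the limit, identify corners, and you rely on the same key supporting results (\cref{lem:htpydim:prod-p-compl-loc-high-con}, \cref{lem:htpydim:R-loc-of-highly-connected}, \cref{lem:htpydim:p-comp-of-post-tower}). The small difference at the lower left (using \cref{lem:htpydim:R-loc-of-highly-connected} on the Postnikov tower instead of \cref{lem:localization:truncation-R-loc}) is fine.

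However, there is a genuine gap in your identification of the upper-right corner (and it propagates into the lower-right corner). You assert $\limil{n} \tau_{\ge 1} \prod_p L_p \tau_{\le n} X \cong \tau_{\ge 1} \prod_p L_p X$ and cite \cref{lem:nilpotence-stability:connective-cover-limit} together with \cref{lem:htpydim:p-comp-of-post-tower}. But \cref{lem:nilpotence-stability:connective-cover-limit} only gives
$\tau_{\ge 1} \limil{n} \tau_{\ge 1} Y_n \cong \tau_{\ge 1} \limil{n} Y_n$; it does \emph{not} identify $\limil{n} \tau_{\ge 1} Y_n$ with $\tau_{\ge 1} \limil{n} Y_n$ without the outer $\tau_{\ge 1}$ on the left. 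Your claimed identification is therefore equivalent to knowing that $\limil{n} \tau_{\ge 1} \prod_p L_p \tau_{\le n} X$ is connected, which you do not establish — ``each term in the tower is connected'' does not suffice, since limits of connected objects in a topos need not be connected. One can prove the needed connectivity (since the tower is locally highly connected, \cref{lem:htpydim:locally-highly-connected-limit-stalk} and the $\pi_0$-argument in the proof of \cref{lem:htpydim:highly-connected-limit-stalk} give that $\pi_0$ of the limit is $*$ on every point of $\Cat S$, hence vanishes), but this needs to be said. The paper sidesteps the issue entirely: after identifying the two left corners it leaves the right corners as $\limil{n} \tau_{\ge 1} \prod_p L_p \tau_{\le n} X$ and its $L_R$, then applies the extra $\tau_{\ge 1}$ to both right corners via \cref{lem:fracture-sqaure:connected-cover-pullback-always} (using that $X$ and $L_R X$ are connected), and only \emph{then} performs the identifications with $\tau_{\ge 1} \prod_p L_p X$ and $\tau_{\ge 1} L_R \prod_p L_p X$, where the outer $\tau_{\ge 1}$ is available to invoke \cref{lem:nilpotence-stability:connective-cover-limit} legitimately. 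You should either add the connectivity argument sketched above, or insert the intermediate square and apply \cref{lem:fracture-sqaure:connected-cover-pullback-always} as the paper does.
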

\begin{proof}
    It suffices to show that the square \eqref{square:limit:connected-canonical} is cartesian, see \cref{lem:fracture-square:canonical-squares}.
    Note that for all $n$, $\tau_{\le n} X$ is nilpotent (\cref{lem:localization:nilpotence-truncation}) and $n$-truncated.
    Therefore, it follows from \cref{lem:fracture-square:truncated} (and the fact that $L_R$ commutes with $\tau_{\ge 1}$, see \cref{lem:localization:connected-cover-R-loc})
    that there are functorial cartesian squares 
    \begin{center}
        \begin{tikzcd}
            \tau_{\le n} X \arrow[r] \arrow[d] &\tau_{\ge 1} \prod_p L_p \tau_{\le n} X \arrow[d] \\
            L_R \tau_{\le n} X \arrow[r] &L_R \tau_{\ge 1} \prod_p L_p \tau_{\le n} X.
        \end{tikzcd}
    \end{center}
    In particular, since limits commute with limits, we get a cartesian square 
    \begin{center}
        \begin{tikzcd}
            \limil{n} \tau_{\le n} X \arrow[r] \arrow[d] &\limil{n} \tau_{\ge 1} \prod_p L_p \tau_{\le n} X \arrow[d] \\
            \limil{n} L_R \tau_{\le n} X \arrow[r] &\limil{n} L_R \tau_{\ge 1} \prod_p L_p \tau_{\le n} X.  
        \end{tikzcd}
    \end{center}
    Note that since $\topos X$ is Postnikov-complete by \cref{lem:htpydim:postnikov-complete},
    the upper left corner is isomorphic to $X$.
    Using \cref{lem:localization:truncation-R-loc}, 
    we compute $\limil{n} L_R \tau_{\le n} X \cong \limil{n} \tau_{\le n} L_R X \cong L_R X$.
    Note that $(\tau_{\ge 1} \prod_p L_p \tau_{\le n} X)_n$ is a locally highly connected tower
    by \cref{lem:htpydim:prod-p-compl-loc-high-con}.
    Thus, \cref{lem:htpydim:R-loc-of-highly-connected}
    gives us an equivalence
    \begin{equation*}
        \limil{n} L_R \tau_{\ge 1} \prod_p L_p \tau_{\le n} X \cong L_R \limil{n} \tau_{\ge 1} \prod_p L_p \tau_{\le n} X.
    \end{equation*}
    Thus, plugging in these equivalences, we arrive at the cartesian square 
    \begin{center}
        \begin{tikzcd}
            X \arrow[r] \arrow[d] &\limil{n} \tau_{\ge 1} \prod_p L_p \tau_{\le n} X \arrow[d] \\
            L_R X \arrow[r] &L_R \limil{n} \tau_{\ge 1} \prod_p L_p \tau_{\le n} X.  
        \end{tikzcd}
    \end{center}
    Since $X$ and $L_R X$ are connected (see \cref{lem:localization:pi-0-eq}), this square factors over the square 
    \begin{center}
        \begin{tikzcd}
            X \arrow[r] \arrow[d] &\tau_{\ge 1} \limil{n} \tau_{\ge 1} \prod_p L_p \tau_{\le n} X \arrow[d] \\
            L_R X \arrow[r] &\tau_{\ge 1} L_R \limil{n} \tau_{\ge 1} \prod_p L_p \tau_{\le n} X.  
        \end{tikzcd}
    \end{center}
    This square is again cartesian, this is an application of \cref{lem:fracture-sqaure:connected-cover-pullback-always}.
    We have equivalences 
    \begin{equation*}
        \tau_{\ge 1} \limil{n} \tau_{\ge 1} \prod_p L_p \tau_{\le n} X
        \cong \tau_{\ge 1} \limil{n} \prod_p L_p \tau_{\le n} X
        \cong \tau_{\ge 1} \prod_p L_p X
    \end{equation*}
    by \cref{lem:nilpotence-stability:connective-cover-limit,lem:htpydim:p-comp-of-post-tower}.
    Using this calculation and the fact that $L_R$ commutes with $\tau_{\ge 1}$ (see \cref{lem:localization:connected-cover-R-loc}),
    we also get an equivalence 
    \begin{equation*}
        \tau_{\ge 1} L_R \limil{n} \tau_{\ge 1} \prod_p L_p \tau_{\le n} X \cong \tau_{\ge 1} L_R \prod_p L_p X.
    \end{equation*}
    Combining these equivalences, we arrive at the cartesian square
    \begin{center}
        \begin{tikzcd}
            X \arrow[r] \arrow[d] &\tau_{\ge 1} \prod_p L_p X \arrow[d] \\
            L_R X \arrow[r] &\tau_{\ge 1} L_R \prod_p L_p X.  
        \end{tikzcd}
    \end{center}
    This proves the theorem.
\end{proof}

\appendix

\section{Examples of \texorpdfstring{$\infty$}{infinity}-Topoi that admit a Locally Finite-Dimensional Cover}
\label{section:examples}
In this section, we provide a list of examples of $\infty$-topoi that admit a locally finite-dimensional cover,
and therefore are $\infty$-topoi where \cref{lem:fracture-square:main-thm} holds.

The main example is the following:
\begin{exmpl}
    Let $\topos X$ be an $\infty$-topos locally of homotopy dimension $\le n$ for some $n \ge 0$.
    If $\topos X$ has enough points, then $\topos X$ has a locally finite-dimensional cover,
    given by the identity $\id{} \colon \topos X \rightleftarrows \topos X \colon \id{}$.
\end{exmpl}

Spcializing the last example, we get:
\begin{exmpl}
    Let $\Cat C$ be a small $\infty$-category.
    Then $\PrShv{\Cat C} \coloneqq \Fun{}(\op{\Cat C}, \An)$ has a locally finite-dimensional cover,
    as it is locally of homotopy dimension $0$, see \cite[Example 7.2.1.9]{highertopoi}.
\end{exmpl}

The other two examples we provide are coming from motivic homotopy theory:
\begin{prop}
    Let $X$ be a quasicompact quasiseperated scheme of finite Krull dimension,
    and write $\smooth{X}$ for the category of quasicompact smooth $X$-schemes.
    Write $\tau$ for either the Zariski or the Nisnevich topology on $\smooth{X}$ (see \cite[Appendix A]{bachmann2020norms} for a definition 
    of the Nisnevich topology),
    and $\topos X \coloneqq \ShvTop{\tau}{\smooth{X}}$ for the $\infty$-topos of $\tau$-sheaves on $\smooth{X}$.
    Then $\topos X$ admits a locally finite-dimensional cover.
\end{prop}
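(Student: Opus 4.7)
The plan is to take as indexing set $I = \smooth{X}$ and, for each $Y \in \smooth{X}$, to set $\topos U_Y \coloneqq \ShvTop{\tau}{Y_\tau}$, where $Y_\tau$ is the small $\tau$-site of $Y$ (étale $Y$-schemes with Nisnevich covers in the Nisnevich case, open subschemes of $Y$ in the Zariski case). The inclusion $\iota_Y \colon Y_\tau \hookrightarrow \smooth{X}$ sending $U \to Y$ to the composite $U \to X$ is well-defined because étale (resp.\ open) maps between smooth $X$-schemes are again smooth, and is both continuous and cocontinuous as a morphism of sites (any $\tau$-cover of such a $U$ in $\smooth{X}$ consists of objects étale over $Y$ and is again a $\tau$-cover in $Y_\tau$). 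By the standard machinery for continuous cocontinuous site morphisms, $\iota_Y$ induces an adjoint triple $\iota_{Y,!} \dashv \iota_Y^* \dashv \iota_{Y,*}$ on the associated $\infty$-topoi of sheaves, with $\iota_Y^*$ given by restriction and hence left exact. This supplies the geometric morphism $\iota_Y^* \colon \topos X \rightleftarrows \topos U_Y \colon \iota_{Y,*}$ together with the further left adjoint demanded in \cref{def:htpydim:cover}.

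Next I would verify that each $\topos U_Y$ has enough points and is locally of finite homotopy dimension. Enough points is classical: Zariski points of $Y$ come from localizations at schematic points, while Nisnevich points come from Henselizations at pairs of a schematic point and a finite separable residue extension. For the dimension bound, observe that a quasicompact smooth $X$-scheme $Y$ is of finite presentation over $X$, so $\dim Y$ is bounded by $\dim X$ plus the relative dimension of $Y \to X$, in particular finite. One then invokes the fact that for a qcqs scheme of finite Krull dimension $d$, the small Zariski (resp.\ Nisnevich) $\infty$-topos is locally of homotopy dimension $\le d$. In the Zariski case this follows from Grothendieck's cohomological-dimension bound together with the standard comparison between cohomological and homotopy dimension for $1$-localic hypercomplete topoi; in the Nisnevich case this is a theorem of Clausen--Mathew.

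The conservativity condition of \cref{def:htpydim:cover} is then essentially automatic. If $f \colon F \to G$ is a morphism in $\topos X$ such that $s^* \iota_Y^*(f)$ is an equivalence in $\An$ for every $Y \in \smooth{X}$ and every point $s^*$ of $\topos U_Y$, then since $\topos U_Y$ has enough points, $\iota_Y^*(f)$ is already an equivalence in $\topos U_Y$ for every $Y$. Since $\iota_Y^*$ is restriction along the full subcategory $Y_\tau \hookrightarrow \smooth{X}$, evaluating at the terminal object $Y \in Y_\tau$ returns the component $f(Y)$; letting $Y$ range over $\smooth{X}$ shows that $f$ is a pointwise equivalence of presheaves, and hence an equivalence of sheaves in $\topos X$.

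The main obstacle is the local homotopy-dimension bound for small Nisnevich topoi of qcqs schemes of finite Krull dimension, which depends on the substantial input of Clausen--Mathew. The remaining ingredients---the adjoint triple arising from a continuous cocontinuous morphism of sites, the enough-points property of small Zariski and Nisnevich topoi, and the conservativity check---are all standard or essentially formal.
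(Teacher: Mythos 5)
Your proof is correct and arrives at the same restriction functor as the paper, but builds it by a somewhat different route. The paper constructs $f_U^* \colon \topos X \to \ShvTop{\tau}{U_\tau}$ as a two-step composite: first the étale geometric morphism $g_U^* \colon \topos X \to \topos X_{/U} \cong \ShvTop{\tau}{(\smooth{X})_{/U}}$, and then a geometric morphism $h_U^*$ induced by the inclusion $U_\tau \hookrightarrow (\smooth{X})_{/U}$, with the extra left adjoint for $h_U^*$ obtained from the adjoint functor theorem after a hands-on check that $h_U^*$ preserves limits (via a commutative square through presheaf categories). You instead work with the single inclusion $\iota_Y \colon Y_\tau \hookrightarrow \smooth{X}$ and observe that it is both continuous and cocontinuous, then invoke the standard machinery (e.g.\ Stacks project, Sites and Sheaves, \S 7.21) that associates to such a functor a geometric morphism $\ShvTop{\tau}{Y_\tau} \to \ShvTop{\tau}{\smooth{X}}$ whose inverse image is restriction and automatically admits a further left adjoint. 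Unwinding definitions, the composite $h_U^* g_U^*$ is exactly restriction along $\iota_U$, so the two constructions agree; your version replaces the explicit limit-preservation check by the general cocontinuity theorem, which is a bit slicker but cites heavier machinery. The remaining verifications (enough points, finite Krull dimension of smooth quasicompact $X$-schemes, and the conservativity argument by evaluating $\iota_Y^* f$ at the terminal object $Y \in Y_\tau$ to recover $f(Y)$) match the paper's.

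Two small expository remarks. First, your parenthetical justifying "continuous and cocontinuous" only argues cocontinuity; continuity (covers in $Y_\tau$ remain covers in $\smooth{X}$, and $\iota_Y$ preserves the fiber products entering the sheaf condition) needs its own one-line check, which does hold since both sites use the scheme-theoretic fiber product. Second, your appeal to the general cocontinuity theorem also implicitly uses that the inverse image is left exact, but this is automatic since restriction along a functor preserves all limits, so this is harmless.
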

\begin{proof}
    We first prove the case where $\tau$ is the Zariski topology.
    For any smooth $X$-scheme $U$, write $U_{\zar}$ for the small Zariski site of $U$ (i.e.\ the poset of open subsets of $U$,
    equipped with the Zariski topology).
    Write $\ShvTop{\zar}{U_{\zar}}$ for the $\infty$-topos of sheaves on this site.
    By \cite[Proposition 6.3.5.1]{highertopoi} there is a canonical geometric morphism of $\infty$-topoi 
    \begin{equation*}
        g_U^* \colon \topos X \rightleftarrows \topos X_{/U} \colon g_{U,*},
    \end{equation*}
    such that $g_U^*$ has a further left adjoint $g_{U,!}$.
    Note that $\topos X_{/U} \cong \ShvTop{\zar}{(\smooth{X})_{/U}}$,
    where the site $(\smooth{X})_{/U}$ carries the induced topology.
    The canonical inclusion $U_{\zar} \to (\smooth{X})_{/U}$ is a morphism of sites,
    and therefore induces a geometric morphism 
    \begin{equation*}
        h_U^* \colon \topos X_{/U} \rightleftarrows \ShvTop{\zar}{U_{\zar}} \colon h_{U,*}.
    \end{equation*}
    Note that $h_U^*$ has a further left adjoint $h_{U,!}$:
    By the adjoint functor theorem, it suffices to prove that $h_U^*$ commutes with limits.
    We get the following commutative diagram
    \begin{center}
        \begin{tikzcd}
            \topos X_{/U} \arrow[d, hook, "i"] \arrow[r, "h_U^*"] &\ShvTop{\zar}{U_{\zar}} \arrow[d, hook, "j"] \\
            \PrShv{(\smooth{X})_{/U}} \arrow[r, "k"] &\PrShv{U_{\zar}},
        \end{tikzcd}
    \end{center}
    where the bottom morphism $k$ is given by restriction of functors.
    The vertical morphisms commute with limits (as they are right adjoints to the respective sheafification functors),
    and $k$ commutes with limits because limits of presheaves can be calculated on sections.
    It therefore suffices to show that $ki(X)$ satisfies Zariski descent for every $X \to U \in \topos X_{/U}$.
    This is clear, as any Zariski cover in $U_{\zar}$ is also a Zariski cover in $(\smooth{X})_U$.

    Composing these geometric morphisms, we get a geometric morphism 
    \begin{equation*}
        f_U^* \colon \topos X \rightleftarrows \ShvTop{\zar}{U_{\zar}} \colon f_{U,*},
    \end{equation*}
    such that $f_U^*$ has a further left adjoint $f_{U,!}$.

    Note that $\ShvTop{\zar}{U_{\zar}}$ is locally of homotopy dimension $\le \dim{U}$ 
    by \cite[Corollary 7.2.4.17]{highertopoi} (here we use the assumptions on $X$).
    It has enough points, since it is the $\infty$-topos associated to a topological space.

    The collection of geometric morphisms $(f_U^*)_{U \in \smooth{X}}$ is jointly conservative:
    Suppose that $\phi \colon X \to Y$ is a morphism in $\topos X$,
    and suppose that $f_U^*(\phi)$ is an equivalence for every $U \in \smooth{X}$.
    in particular, by taking global sections, we see that the morphism 
    $(f_U^*(X))(U \to U) \to (f_U^*(Y))(U \to U)$ is an equivalence (where $U \to U$ is the terminal object in $U_{\zar}$).
    Since $h_U^*$ is just given by restriction, 
    this is equivalent to the morphism 
    $(g_U^*(X))(U \to U) \to (g_U^*(Y))(U \to U)$.
    But this in turn now is equivalent to the morphism 
    $\phi(U) \colon X(U) \to Y(U)$, as $g_U^*$ is right adjoint to $g_{!,U}$,
    which maps the terminal object $U \to U$ to $U$.
    Since $U$ was arbitrary, we see that $\phi$ is an equivalence.

    This proves that the collection $(f_U^* \colon \topos X \to \ShvTop{\zar}{U_{\zar}})_U$ 
    forms a locally finite-dimensional cover of $\topos X$.

    If $\tau$ is the Nisnevich topology, one argues similarly,
    but replacing the small zariski site $U_{\zar}$ 
    by the small nisnevich site $U_{et}$, consisting of étale $U_i$-schemes,
    equipped with the Nisnevich topology.
    The rest of the proof is the same, we get that $\ShvTop{\nis}{U_{et}}$
    is locally of homotopy dimension $\le \dim{U}$ from \cite[Theorem 3.18]{clausen2021hyperdescent}
    (and it has enough points, given by henselian local schemes, see \cite[Proposition A.3]{bachmann2020norms}).
\end{proof}

\bibliography{bibliography}

\end{document}